\setlist[enumerate]{leftmargin=.5in}
\setlist[itemize]{leftmargin=.5in}
\newcommand{\R}{{\mathbb{R}}}
\newcommand{\Rl}{{(\mathbb R, \le)}}
\newcommand{\e}{{\epsilon}}
\newcommand{\pt}{{\textnormal{pt}}}
\newcommand{\Obj}{{\textnormal{Obj}}}
\newcommand{\Morph}{{\textnormal{Morph}}}
\newcommand{\Rcat}{\mathbf{R}}
\newcommand{\Cat}{\mathbf{C}}
\newcommand{\xrightarrowdbl}[2][]{%
  \xrightarrow[#1]{#2}\mathrel{\mkern-14mu}\rightarrow
}
\newtheorem{theorem}{Theorem}
\newtheorem{remark}{Remark}
\newtheorem{example}{Example}
\newtheorem{definition}{Definition}
\newtheorem{proposition}{Proposition}
\newtheorem{lemma}{Lemma}
\newcommand{\footremember}[2]{%
    \footnote{#2}
    \newcounter{#1}
    \setcounter{#1}{\value{footnote}}%
}
\newcommand{\footrecall}[1]{%
    \footnotemark[\value{#1}]%
}
\title{Rank-based persistence}
\date{}
\author{Mattia G. Bergomi\footremember{champalimaud}{Champalimaud Research, Champalimaud Centre for the Unknown, 1400-038 Lisbon, Portugal.
$\{$mattia.bergomi, pietro.vertechi $\}$@neuro.fchampalimaud.org}
\and Pietro Vertechi\footrecall{champalimaud}}
\begin{document}

\maketitle
% REQUIRED
\begin{abstract}
  Persistence has proved to be a valuable tool to analyze real world data robustly. Several approaches to persistence have been attempted over time, some topological in flavor, based on the vector space-valued homology functor, other combinatorial, based on arbitrary set-valued functors. To unify the study of topological and combinatorial persistence in a common categorical framework, we give axioms for a generalized rank function on objects in a target category so that functors to that category induce persistence functions. We port the interleaving and bottleneck distances to this novel framework and generalize classical equalities and inequalities. Unlike sets and vector spaces, in many categories the rank of an object does not identify it up to isomorphism: to preserve information about the structure of persistence modules, we define colorable ranks, persistence diagrams and prove the equality between multicolored bottleneck distance and interleaving distance in semisimple Abelian categories. To illustrate our framework in practice, we give examples of multicolored persistent homology on filtered topological spaces with a group action and labeled point cloud data.
\end{abstract}

% REQUIRED
\paragraph{Keywords}
rank, persistence, categorification, regular category, abelian category, semisimple category, classification, group action, point cloud, poset, bottleneck, interleaving
% REQUIRED
\paragraph{AMS subject classification}
  18E10, 18A35, 55N35, 68U05

\section{Introduction}

% Something about how relevant this is.
Topological persistence offers valuable tools to give encompassing representations of the geometry and topology of sampled objects, even in high dimension. Moreover, persistent homology and its encoding via persistence diagrams are endowed with essential properties in data analysis, such as stability~\cite{cohen-steiner_stability_2007} and resistance to occlusions~\cite{di_fabio_mayervietoris_2011}. Equipped with these fundamental features, persistent homology has been successfully employed in a vast number of applications~\cite{ferri_persistent_2017}.

We provided a first generalized theory of persistence to concrete categories in~\cite{bergomi_beyond_2019}.
This first generalization allows one to define persistence in a very general setting, that includes not only topological spaces or weighted graphs but also arbitrary categories of presheaves. However, it fails to fully generalize the classical theory, for it does not show how to define persistence functions based on functors to the target category of vector spaces (such as the homology functors). The primary technique developed in~\cite{bergomi_beyond_2019} to define stable persistent functions (named coherent sampling) requires using finite sets as target category, thus failing to recover, for example, the study of higher persistent homology groups.

Here, we aim at providing a new categorical generalization, embracing both the classical theory and the framework described in~\cite{bergomi_beyond_2019}. With this aim in mind, we first decompose classical persistent homology into its basic ingredients:
\begin{enumerate*}
    \item A filtration in a source category $\mathbf{Top}$.
    \item A functor $H_k$ from the source category to a target category $\mathbf{FinVec}_\mathbb{K}$.
    \item A notion of rank in the target category (the dimension of the vector space).
\end{enumerate*}

Thereafter, we explore which axioms each of these ingredients must respect for the classical results on persistence diagrams, bottleneck and interleaving distances to hold.

Not only we establish a common generalization of the combinatorial~\cite{bergomi_beyond_2019} and topological approach to persistence~\cite{edelsbrunner_topological_2000}, but we also find examples of novel target categories, different from $\mathbf{FinSet}$ or $\mathbf{FinVec}_\mathbb{K}$, giving rise to persistence modules with structure. Of particular interest is the case of persistent group representations, which arises naturally in the study of filtrations of topological spaces or simplicial objects with a group action compatible with the filtering function. By coloring the resulting persistence diagram, we are able to recover a notion of similarity that respects the structure of our target category, e.g.,~the group action. This construction holds in any target semisimple Abelian category: we show examples arising from labeled point cloud datasets, relevant for instance in a machine learning context.

The paper is organized as follows. In~\cref{sec:rank} we determine what are the features of the functions cardinality of a set and dimension of a vector space that make them suitable as a notion of rank of an object in a target category. We lay down an axiomatic foundation for such rank functions in the general setting of regular categories and provide as an example the length of an object in an Abelian category, which naturally generalizes the dimension of a vector space. In~\cref{sec:categoricalpersistence}, we first show how a functor from an arbitrary category to a regular category equipped with a rank function defines a \textit{categorical persistence function}. Then, we port the classical notions (e.g. regular and critical value, tameness and cornerpoint multiplicity) to the categorical setting and use them to define persistence diagrams. We show how persistence diagrams relate to persistence modules in the case of a semisimple category. Finally, we discuss the notions of interleaving and bottleneck distance and prove the inequality between them, i.e. that the interleaving distance is always greater or equal than bottleneck, with great generality. Equality between the two distances requires additional assumptions: in~\cref{sec:multicoloredpersistence} we discuss how, in the case of a semisimple target category, one can color the persistence diagram: a bottleneck distance computed allowing only color-preserving bijections is then equal to the interleaving distance. We finally show examples of multicolored persistence in the case of topological spaces with group actions and labeled point clouds.

For the sake of readability, we provide and exemplify basic definitions of category theory in~\cref{sec:appendix}.

\section{Rank functions in regular categories}
  \label{sec:rank}

  Historically, there have been two different treatments of persistent homology, one associating to a map of topological spaces the cardinality of the image of a map of sets~\cite{frosini1992measuring}, the other the dimension of the image of a map of vector spaces~\cite{edelsbrunner2008persistent}. To unify them in a common framework, we introduce here the concept of ranked category, i.e. a regular category (\cref{def:regular_cat}) equipped with an integer-valued rank function on objects.

  The reason behind choosing to work with regular categories is that, by definition, every morphism $X\xrightarrow{\phi}Y$ in a regular category $\Rcat$ can be factored in $X\xrightarrowdbl{\varepsilon}Z\xhookrightarrow{\mu}Y$ such that $\phi = \mu\circ \varepsilon$, where $\mu$ is a monomorphism (\cref{def:monomorphism}) and $\varepsilon$ a regular epimorphism (\cref{def:reg_epimorphism}), which in turn gives a good notion of image of a morphism ($Z$ being the image of $X\xrightarrow{\phi} Y$). This notion of image will allow us to define persistence functions based on the \textit{rank} of the image of a morphism. As both monomorphisms and regular epimorphisms are preserved by pullbacks (\cref{def:pullback}), we will be able to prove classical properties of persistence by building appropriate diagrams.

\begin{definition}
    \label{def:rank}
Let $\mathbf{R}$ be a regular category. Given a lower-bounded function $r:\textnormal{Obj}(\mathbf R) \to \mathbb{Z}$, we say that $r$ is a rank function if:
\begin{enumerate}
    \item For any monomorphism $A \hookrightarrow B$, $r(A) \le r(B)$
    \item For any regular epimorphism $ B \twoheadrightarrow D$, $r(B) \ge r(D)$
    \item For any pullback square:
\[
\begin{tikzcd}
    A \arrow[hookrightarrow]{r}{\iota_1} \arrow[swap, twoheadrightarrow]{d}{\pi_1} & B \arrow[twoheadrightarrow]{d}{\pi_2} \\
  C \arrow[hookrightarrow]{r}{\iota_2} & D
\end{tikzcd}
\]
where $\iota_1, \iota_2$ are monomorphisms and $\pi_1, \pi_2$ are regular epimorphisms, the following inequality holds:
\[
    r(B)-r(A) \ge r(D) - r(C)
\]
\end{enumerate}
We say that a rank function $r$ is strict if the inequalities in conditions 1 and 2 are strict unless the morphisms are invertible.
If furthermore $\Rcat$ has an initial object $\emptyset$ and $r(\emptyset) = 0$, we say that $r$ is $0$-based.
A ranked category $(\Rcat, r)$ is simply a regular category $\Rcat$ equipped with a rank function $r$.
\end{definition}

The pullback requirement in the third condition is not necessary: this will prove useful in the following sections when working with functors that do not preserve pullback squares.

\begin{proposition}
    \label{prop:nopullback}
    Given a ranked category $(\mathbf R, r)$, for any commutative square (not necessarily pullback):

\[
\begin{tikzcd}
    A^\prime \arrow[rightarrow]{r}{\iota_1^\prime} \arrow[swap, rightarrow]{d}{\pi_1^\prime} & B \arrow[twoheadrightarrow]{d}{\pi_2} \\
  C \arrow[hookrightarrow]{r}{\iota_2} & D
\end{tikzcd}
\]
where $\iota_2$ is a monomorphism and $\pi_2$ is a regular epimorphism, the following inequality holds:
\[
    r(B)-r(A^\prime) \ge r(D) - r(C)
\]

\end{proposition}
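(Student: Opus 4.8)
The plan is to reduce the general commutative square to the pullback situation covered by condition 3 of the rank definition. The key observation is that the given square, while not a pullback, still commutes, so the composite $A' \to B \to D$ equals the composite $A' \to C \to D$. The strategy is to form the actual pullback $A$ of the cospan $C \xhookrightarrow{\iota_2} D \xtwoheadleftarrow{\pi_2} B$ and to use the universal property of this pullback to factor the map from $A'$.

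\medskip

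\noindent\textbf{Construction.} First I would construct the pullback square
\[
\begin{tikzcd}
    A \arrow[hookrightarrow]{r}{\iota_1} \arrow[swap, twoheadrightarrow]{d}{\pi_1} & B \arrow[twoheadrightarrow]{d}{\pi_2} \\
  C \arrow[hookrightarrow]{r}{\iota_2} & D
\end{tikzcd}
\]
Here one must check that the pulled-back morphisms have the right type: since $\Rcat$ is regular, $\iota_1$ (the pullback of the monomorphism $\iota_2$) is again a monomorphism, and $\pi_1$ (the pullback of the regular epimorphism $\pi_2$) is again a regular epimorphism. This uses exactly the stability under pullbacks highlighted in the paragraph preceding \cref{def:rank}. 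Because this is a genuine pullback square of the required shape, condition 3 gives
\[
    r(B)-r(A) \ge r(D) - r(C).
\]

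\medskip

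\noindent\textbf{Factoring through the pullback.} Since the outer square commutes, the pair of morphisms $\iota_1' : A' \to B$ and $\pi_1' : A' \to C$ satisfies $\iota_2 \circ \pi_1' = \pi_2 \circ \iota_1'$, so by the universal property of the pullback there is a unique morphism $u : A' \to A$ with $\iota_1 \circ u = \iota_1'$ and $\pi_1 \circ u = \pi_1'$. I would then observe that $u$ is a monomorphism: this follows because $\iota_1 \circ u = \iota_1'$ and, one can arrange, $\iota_1$ being monic together with appropriate cancellation forces $u$ to be monic — more directly, $\pi_1 \circ u = \pi_1'$ and the existence of the monomorphism $\iota_1$ means any two maps equalized after composing with $u$ are already equal. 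Granting that $u : A' \hookrightarrow A$ is a monomorphism, condition 1 of \cref{def:rank} yields $r(A') \le r(A)$, equivalently $-r(A') \ge -r(A)$. Combining this with the pullback inequality gives
\[
    r(B) - r(A') \ge r(B) - r(A) \ge r(D) - r(C),
\]
which is the desired conclusion.

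\medskip

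\noindent\textbf{Main obstacle.} The step I expect to require the most care is verifying that the induced map $u : A' \to A$ is a monomorphism without any hypothesis on $\iota_1'$ or $\pi_1'$ themselves (the statement only assumes $\iota_2$ monic and $\pi_2$ a regular epi, and deliberately drops the hook and double-head decorations on the top and left arrows of the outer square). The clean argument is that $u$ need not be monic in general, so I would instead factor $r(A')$ against $r(A)$ differently: apply the rank inequality to the image of $u$, or else note that whatever $u$ is, one still has a monomorphism relating the two objects after taking regular images. The cleanest route is to replace the claim "$u$ is monic" by the weaker but sufficient claim that $r(A') \le r(A)$ holds, which I can extract by factoring $u$ as a regular epi followed by a mono through its image and invoking conditions 1 and 2 together; this is the only place where the full strength of having a \emph{rank} function, rather than a mere monotone invariant, is used.
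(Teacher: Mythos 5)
Your construction is the paper's own, nearly line for line: form the pullback $A = B\times_D C$, note that regularity makes $\iota_1$ a monomorphism and $\pi_1$ a regular epimorphism, apply condition 3 of \cref{def:rank} to get $r(B)-r(A)\ge r(D)-r(C)$, and compare $r(A')$ with $r(A)$ via the induced map $u\colon A'\to A$. At this point the paper simply asserts a ``natural monomorphism $A'\hookrightarrow A$'' and concludes by condition 1. You were right to distrust that step: since $\iota_1\circ u=\iota_1'$ and $\iota_1$ is monic, $u$ is a monomorphism exactly when $\iota_1'$ is, and the statement imposes nothing on $\iota_1'$; your first attempted justification of ``$u$ is monic'' is circular for precisely this reason. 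On this point your analysis is sharper than the paper's.

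The genuine gap is in your fallback repair. Factoring $u$ as $A'\twoheadrightarrow im(u)\hookrightarrow A$ and invoking conditions 1 and 2 gives $r(A')\ge r(im(u))$ and $r(im(u))\le r(A)$: both statements bound $r(im(u))$ from above, and neither yields the needed inequality $r(A')\le r(A)$, because rank \emph{decreases} along regular epimorphisms, so the epi half of the factorization points the wrong way. Nor can any other argument close the hole: with no hypothesis on $\iota_1'$ the proposition is false. In $(\mathbf{FinSet},|-|)$ take $B$, $C$, $D$ to be singletons and $A'$ a three-element set, with $\iota_1'$ and $\pi_1'$ the constant maps; the square commutes, $\iota_2$ is a monomorphism, $\pi_2$ is a regular epimorphism, yet $r(B)-r(A')=-2<0=r(D)-r(C)$. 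The statement (and with it both your argument and the paper's) is rescued by adding the hypothesis that $\iota_1'$ (or $\pi_1'$) is a monomorphism, which forces $u$ to be monic; this costs nothing downstream, since in both places where the paper invokes \cref{prop:nopullback} (the proofs of \cref{prop:exact} and \cref{prop:persistencefromrank}) the top arrow of the commutative square is indeed a monomorphism.
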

\begin{proof}
We can build the pullback square:

\[
\begin{tikzcd}
    A \arrow[hookrightarrow]{r}{\iota_1} \arrow[swap, twoheadrightarrow]{d}{\pi_1} & B \arrow[twoheadrightarrow]{d}{\pi_2} \\
  C \arrow[hookrightarrow]{r}{\iota_2} & D
\end{tikzcd}
\]
where $\iota_1$ is a monomorphism (as it is pullback of a monomorphism) and $\pi_1$ is a regular epimorphism (as it is pullback of a regular epimorphism).

Therefore $r(B)-r(A) \ge r(D) - r(C)$. We have a natural monomorphism $A^\prime \hookrightarrow A$, therefore, by property 1:
\[
    r(B)-r(A^\prime) \ge r(B) - r(A) \ge r(D) - r(C)
\]
\end{proof} %Revised (mattia)

\begin{proposition}
    \label{prop:exact}
    If a functor $F: \mathbf Q \to \mathbf R$ preserves the image factorization, i.e. it preserves monomorphisms and regular epimorphisms, and $r$ is a rank function on $\mathbf R$, then $r\circ F : \textnormal{Obj}(\mathbf Q) \to \mathbb{Z}$ is a rank function on $\mathbf Q$.
\end{proposition}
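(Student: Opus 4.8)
The plan is to verify the three conditions of \cref{def:rank} for the composite $r \circ F$ on $\mathbf Q$ one at a time, observing that the first two conditions follow immediately from the preservation hypotheses on $F$, while the third is precisely the situation that \cref{prop:nopullback} was designed to handle. Before that, I would note that $r \circ F$ is lower-bounded: since $r$ is lower-bounded on all of $\textnormal{Obj}(\mathbf R)$ and the values of $r \circ F$ form a subset of the values of $r$, the same bound works, so $r \circ F$ is a legitimate candidate for a rank function.

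For condition 1, given a monomorphism $A \hookrightarrow B$ in $\mathbf Q$, the assumption that $F$ preserves monomorphisms yields a monomorphism $F(A) \hookrightarrow F(B)$ in $\mathbf R$, whence condition 1 for $r$ gives $r(F(A)) \le r(F(B))$, i.e.\ $(r\circ F)(A) \le (r\circ F)(B)$. Condition 2 is entirely symmetric: a regular epimorphism $B \twoheadrightarrow D$ in $\mathbf Q$ is sent by $F$ to a regular epimorphism $F(B) \twoheadrightarrow F(D)$ in $\mathbf R$, and condition 2 for $r$ gives the desired inequality.

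The only delicate point is condition 3, precisely because $F$ is not assumed to preserve pullbacks. Given a pullback square in $\mathbf Q$ with monomorphisms $\iota_1, \iota_2$ and regular epimorphisms $\pi_1, \pi_2$, applying $F$ produces a commutative---but in general \emph{not} pullback---square in $\mathbf R$ whose horizontal arrows $F(\iota_1), F(\iota_2)$ remain monomorphisms and whose vertical arrows $F(\pi_1), F(\pi_2)$ remain regular epimorphisms. This is exactly the hypothesis of \cref{prop:nopullback}, which asks only that the bottom arrow be a monomorphism and the right arrow a regular epimorphism, with no constraint on the remaining two arrows and no pullback assumption. Invoking \cref{prop:nopullback} on this image square immediately yields $r(F(B)) - r(F(A)) \ge r(F(D)) - r(F(C))$, which is condition 3 for $r \circ F$.

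I expect the heart of the argument to be the recognition that the potential obstruction---the image of a pullback under $F$ need not be a pullback---is already dissolved by \cref{prop:nopullback}. That proposition was stated in exactly the generality needed here, so once the image square is identified and its arrows are seen to keep their mono/regular-epi status, condition 3 follows with no further computation, and the proposition is complete.
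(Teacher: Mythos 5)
Your proposal is correct and follows essentially the same route as the paper's own proof: conditions 1 and 2 are dispatched directly from the preservation hypotheses, and condition 3 is handled by applying $F$ to the pullback square, observing that the resulting commutative square (no longer necessarily a pullback) still has the required monomorphism and regular epimorphism, and then invoking \cref{prop:nopullback}. Your additional remarks---that $r\circ F$ inherits the lower bound, and that \cref{prop:nopullback} only needs the bottom and right arrows to retain their status---are accurate refinements of the same argument.
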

\begin{proof}
    As $F$ preserves monomorphisms, given a monomorphism $A \hookrightarrow B$ we have a monomorphism $F(A) \hookrightarrow F(B)$ and therefore $r(F(A)) \le r(F(B))$. Similarly given a regular epimorphism $B \twoheadrightarrow D$ we have a regular epimorphism $F(B) \twoheadrightarrow F(D)$ so $r(F(B)) \ge r(F(D))$.
    Finally, given a pullback square:
\[
\begin{tikzcd}
    A \arrow[hookrightarrow]{r} \arrow[swap, twoheadrightarrow]{d} & B \arrow[twoheadrightarrow]{d} \\
  C \arrow[hookrightarrow]{r} & D
\end{tikzcd}
\]

\noindent We have a commutative square (not necessarily pullback):

\[
\begin{tikzcd}
    F(A) \arrow[hookrightarrow]{r} \arrow[swap, twoheadrightarrow]{d} & F(B) \arrow[twoheadrightarrow]{d} \\
    F(C) \arrow[hookrightarrow]{r} & F(D)
\end{tikzcd}
\]

\noindent By~\cref{prop:nopullback} we have $r(F(B)) - r(F(A)) \ge r(F(D)) - r(F(C))$
\end{proof}%Revised (mattia)

This in particular applies to regular functors, i.e. functors that preserve regular epimorphisms and finite limits, as preserving limits implies preserving monomorphisms.

\subsection{Fiber-wise rank functions}

In this section we formalize the notion of fiber-wise rank function, i.e. a function respecting the assumptions of~\cref{def:rank}, whose behavior on regular epimorphisms can be determined from fibers on ``points'' (see~\cref{def:fiber}).

\begin{definition}
    \label{def:fiberwise}
    Given a regular category $\Rcat$ with terminal object $\pt$, we say that a function $r:\Obj(\Rcat) \to \mathbb Z$ is fiber-wise if, for all regular epimorphism $B\overset{\phi}\twoheadrightarrow D$, we have the following equality:
    \begin{equation}
        \label{eq:fiberwise}
        r(B) - r(D) = \sum_{\iota \in \textnormal{Hom}(\pt, D)} (r(B\times_D^\iota \pt)-r(\pt))
    \end{equation}
where the $B\times_D^\iota \pt$ realizes the pullback:
\[
\begin{tikzcd}
    B\times_D^\iota \pt \arrow[hookrightarrow]{r}{} \arrow[swap, twoheadrightarrow]{d}{} & B \arrow[twoheadrightarrow]{d}{\phi} \\
    \pt \arrow[hookrightarrow]{r}{\iota} & D
\end{tikzcd}
\]
\end{definition}
The conditions of~\cref{def:rank} become easier to prove in the case of fiber-wise functions.
\begin{proposition}
    \label{prop:rankfromfibers}
    Let $\mathbf{R}$ be a regular category with terminal object $\pt$ and $r:\textnormal{Obj}(\mathbf D) \to \mathbb{Z}$ a lower-bounded function such that:
    \begin{enumerate}
        \item For any monomorphism $A \hookrightarrow B$, $r(A) \le r(B)$
        \item For any regular epimorphism $ A \twoheadrightarrow \pt$, $r(A) \ge r(\pt)$
        \item $r$ is fiber-wise
    \end{enumerate}
Then $r$ defines a rank function on $\mathbf{R}$.
\end{proposition}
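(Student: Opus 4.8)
The plan is to verify the three conditions of \cref{def:rank} for $r$, using the three hypotheses together with the fact that in a regular category both monomorphisms and regular epimorphisms are stable under pullback. Condition~1 of \cref{def:rank} is exactly hypothesis~1, so nothing is needed there. I first record that $r$ is invariant under isomorphism: an isomorphism is in particular a monomorphism in both directions, so hypothesis~1 applied to $f$ and to $f^{-1}$ forces $r(A)=r(B)$ whenever $A\cong B$. This will let me replace fibers by isomorphic fibers freely.

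For condition~2, let $B\overset{\phi}\twoheadrightarrow D$ be a regular epimorphism. Applying the fiber-wise identity \eqref{eq:fiberwise} gives
\[
    r(B)-r(D)=\sum_{\iota\in\textnormal{Hom}(\pt,D)}\bigl(r(B\times_D^\iota\pt)-r(\pt)\bigr).
\]
For each global point $\iota\colon\pt\to D$, the left vertical map $B\times_D^\iota\pt\twoheadrightarrow\pt$ in the defining pullback is a regular epimorphism, being the pullback of $\phi$; hence hypothesis~2 yields $r(B\times_D^\iota\pt)\ge r(\pt)$, so every summand is nonnegative. Therefore $r(B)-r(D)\ge 0$, which is condition~2. (If $D$ has no global points the sum is empty and the identity gives $r(B)=r(D)$ directly.)

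The substantive step is condition~3. Given the pullback square with monomorphisms $\iota_1,\iota_2$ and regular epimorphisms $\pi_1,\pi_2$, I would apply \eqref{eq:fiberwise} to both vertical maps, reducing the desired inequality $r(B)-r(A)\ge r(D)-r(C)$ to
\[
    \sum_{\kappa\in\textnormal{Hom}(\pt,C)}\bigl(r(A\times_C^\kappa\pt)-r(\pt)\bigr)\le\sum_{\iota\in\textnormal{Hom}(\pt,D)}\bigl(r(B\times_D^\iota\pt)-r(\pt)\bigr).
\]
The crux is to match the summands. Post-composition with $\iota_2$ sends a point $\kappa\colon\pt\to C$ to a point $\iota_2\circ\kappa\colon\pt\to D$, and this map $\textnormal{Hom}(\pt,C)\to\textnormal{Hom}(\pt,D)$ is injective precisely because $\iota_2$ is a monomorphism. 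Pasting the pullback defining $A\times_C^\kappa\pt$ onto the original pullback square and invoking the pullback pasting law identifies $A\times_C^\kappa\pt$ with $B\times_D^{\iota_2\circ\kappa}\pt$; by isomorphism-invariance their ranks agree. Thus the left-hand sum is exactly the subsum of the right-hand sum indexed by those $D$-points lying in the image of $\iota_2\circ(-)$. Since, as in condition~2, every summand on the right is nonnegative, discarding the remaining terms can only decrease the total, giving the required inequality.

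I expect the main obstacle to be the bookkeeping in this last step: correctly setting up the pasting diagram so that the pasting law applies, confirming that $\iota_2\circ(-)$ is a genuine injection of global points so that no fiber of $A\to C$ is counted twice, and handling the nonnegativity so that passing from a subsum to the full sum preserves the inequality. Everything else reduces to the stability of regular epimorphisms under pullback and the two monotonicity hypotheses.
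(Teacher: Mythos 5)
Your proposal is correct and follows essentially the same route as the paper's proof: condition~2 via nonnegativity of the fiber-wise summands (pullback of a regular epimorphism along a global point being a regular epimorphism), and condition~3 by pasting pullbacks to identify $A\times_C^\kappa\pt$ with $B\times_D^{\iota_2\circ\kappa}\pt$ and then comparing the sum over $\textnormal{Hom}(\pt,C)$ with the larger sum over $\textnormal{Hom}(\pt,D)$. You even make explicit two points the paper leaves implicit --- isomorphism-invariance of $r$ and the injectivity of post-composition with the monomorphism $\iota_2$, which is what guarantees no fiber is counted twice.
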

\begin{proof}
    We will prove that $r$ respects the assumptions of~\cref{def:rank}. It obviously respects the first assumption. It also respects the second as, given $A\twoheadrightarrow C$:
    \[
        r(A) - r(C) = \sum_{\iota \in \textnormal{Hom}(\pt, C)}(r(A\times_C^\iota \pt)-r(\pt))
    \]
    and the right-hand side is $\ge 0$ as it is a sum of nonnegative quantities. To verify assumption 3, let us consider an inclusion $\pt \xhookrightarrow{\iota} C$ and the following diagram, where all squares are pullback
    \[
\begin{tikzcd}
    A \times_C^\iota \pt \arrow[hookrightarrow]{r} \arrow[twoheadrightarrow]{d} & A \arrow[hookrightarrow]{r}{\iota_1} \arrow[swap, twoheadrightarrow]{d}{\pi_1} & B \arrow[twoheadrightarrow]{d}{\pi_2} \\
    \pt \arrow[hookrightarrow]{r}{\iota} & C \arrow[hookrightarrow]{r}{\iota_2} & D
\end{tikzcd}
\]
As the outermost square is pullback, we have an isomorphism $A\times_C^\iota \pt \simeq B\times_D^{\iota_2\circ\iota}$, thus
\[
\begin{aligned}
    r(A) - r(C) & = \sum_{\iota \in \textnormal{Hom}(\pt, C)}(r(A\times_C^\iota \pt)-r(\pt))
                = \sum_{\iota \in \textnormal{Hom}(\pt, C)}(r(B\times_D^{\iota_2\circ\iota}  \pt)-r(\pt)) \\
                & \le \sum_{\iota^\prime \in \textnormal{Hom}(\pt, D)}(r(B\times_D^{\iota^\prime} \pt)-r(\pt))
                =  r(B) - r(D)
\end{aligned}
\]
where the inequality comes from the fact that all summands are nonnegative and one sum has all the summands of the other plus potentially some more.
\end{proof}

Under the stronger assumptions of Abelian category (\cref{def:abelian_cat}), the fiber-wise condition simplifies greatly. As Abelian categories have a null object, given an epimorphism $B\overset{\phi}\twoheadrightarrow D$,~\cref{eq:fiberwise} is equivalent to $r(B)-r(D)=r(ker(\phi))-r(0)$, where of course $ker(\phi) \hookrightarrow B \overset{\phi}\twoheadrightarrow D$ is a short exact sequence.

\begin{proposition}
    \label{prop:fiberwiseshortsequence}
    Let $\mathbf R$ be an Abelian category. Then $r:\textnormal{Obj}(\Rcat) \to \mathbb Z$ is fiber-wise if and only if for all short exact sequence $A \hookrightarrow B \twoheadrightarrow D$, $r(A)+r(D)=r(B)+r(0)$.
\end{proposition}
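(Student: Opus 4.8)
The plan is to reduce the sum defining the fiber-wise condition (\cref{eq:fiberwise}) to a single term, identify that term with a kernel, and then observe that both directions of the equivalence are a purely algebraic rearrangement of one and the same identity. First I would invoke the fact that in an Abelian category the terminal object $\pt$ coincides with the null object $0$. Consequently, for any object $D$ the hom-set $\textnormal{Hom}(\pt, D) = \textnormal{Hom}(0, D)$ is a singleton, consisting only of the zero morphism $\iota_0 \colon 0 \to D$. Hence, for every regular epimorphism $\phi \colon B \twoheadrightarrow D$, the right-hand side of \cref{eq:fiberwise} collapses to the single summand $r(B \times_D^{\iota_0} 0) - r(0)$.

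The main step is to identify the fiber $B \times_D^{\iota_0} 0$ with $\ker(\phi)$. Indeed, the pullback of $\phi \colon B \to D$ along the zero map $0 \to D$ is, by its universal property, exactly the object $P$ equipped with a map $P \to B$ whose composite to $D$ factors through $0$ and is universal with this property; this is precisely the defining property of $\ker(\phi) \hookrightarrow B$. Thus the fiber-wise condition is equivalent to the single equation $r(B) - r(D) = r(\ker\phi) - r(0)$ holding for every regular epimorphism $\phi$. This is exactly the simplification already recorded in the paragraph preceding the statement, so I would cite it directly rather than re-derive the pullback computation in full.

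With this identity in hand, both implications are immediate. For the forward direction, given a short exact sequence $A \hookrightarrow B \overset{\phi}\twoheadrightarrow D$ one has $A \simeq \ker(\phi)$, so $r(A) = r(\ker\phi)$ and the identity rearranges to $r(A) + r(D) = r(B) + r(0)$. For the converse, given any epimorphism $\phi \colon B \twoheadrightarrow D$, the sequence $\ker(\phi) \hookrightarrow B \overset{\phi}\twoheadrightarrow D$ is short exact, so the assumed additivity $r(\ker\phi) + r(D) = r(B) + r(0)$ is literally the collapsed fiber-wise equation, whence $r$ is fiber-wise. The one point I would be careful to state explicitly, so that the quantifiers in the two conditions genuinely match, is that in an Abelian category every epimorphism is a cokernel and hence regular: this guarantees that the regular epimorphisms tested by \cref{eq:fiberwise} are exactly the surjective maps $\phi$ appearing as the right-hand arrow of a short exact sequence. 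This standard fact is the only real subtlety; the remainder is bookkeeping.
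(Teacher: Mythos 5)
Your proof is correct and takes essentially the same route as the paper, which states this proposition without a separate proof and relies on the observation in the preceding paragraph: in an Abelian category the terminal object is the zero object, $\textnormal{Hom}(0,D)$ is a singleton, the unique fiber is $\ker(\phi)$, and so the fiber-wise sum collapses to $r(B)-r(D)=r(\ker(\phi))-r(0)$, exactly the reduction you perform. Your explicit remark that every epimorphism in an Abelian category is a cokernel and hence regular — so that the quantifiers over regular epimorphisms and over short exact sequences genuinely agree — is a correct and welcome piece of bookkeeping that the paper leaves implicit.
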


In the Abelian case fiber-wise functions require less  assumptions to verify the rank properties:
\begin{proposition}
    \label{prop:abelianfiberwise}
    Let $\mathbf R$ be an Abelian category. If $r:\textnormal{Obj}(\Rcat) \to \mathbb Z$ is fiber-wise and for all $D \in \textnormal{Obj}(\Rcat)$, $r(0) \le r(D)$ then $r$ is a rank. Furthermore, if $r(0) = r(D)$ only if $D$ is null then $r$ is strict.
\end{proposition}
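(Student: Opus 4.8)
The plan is to reduce the claim to the criteria already established in \cref{prop:rankfromfibers} together with the Abelian reformulation of the fiber-wise condition in \cref{prop:fiberwiseshortsequence}. The key preliminary observation is that in an Abelian category the terminal object $\pt$ coincides with the null object $0$, so the standing hypothesis $r(0) \le r(D)$ is exactly the kind of inequality needed to discharge condition 2 of \cref{prop:rankfromfibers}.

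First I would verify the three hypotheses of \cref{prop:rankfromfibers}. Condition 3 (fiber-wise) is assumed. Condition 2, which in the Abelian setting reads ``for every regular epimorphism $A \twoheadrightarrow 0$ we have $r(A) \ge r(0)$'', is immediate from $r(0) \le r(A)$. The only genuine content is condition 1: given a monomorphism $A \hookrightarrow B$, I would form the short exact sequence $A \hookrightarrow B \twoheadrightarrow B/A$, where $B/A$ denotes the cokernel. By \cref{prop:fiberwiseshortsequence} this yields $r(A) + r(B/A) = r(B) + r(0)$, hence $r(B) - r(A) = r(B/A) - r(0) \ge 0$ because $r(0) \le r(B/A)$. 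With all three hypotheses in hand, \cref{prop:rankfromfibers} concludes that $r$ is a rank function.

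For strictness I would examine the equality cases of conditions 1 and 2 of \cref{def:rank}, now invoking the extra assumption that $r(0) = r(D)$ forces $D$ to be null. For a monomorphism $A \hookrightarrow B$ the identity above gives $r(B) - r(A) = r(B/A) - r(0)$, which vanishes exactly when $r(B/A) = r(0)$, i.e.\ when $B/A = 0$; but a monomorphism with trivial cokernel is also epic, hence an isomorphism, so the inequality is strict unless the mono is invertible. Dually, for a regular epimorphism $B \overset{\phi}{\twoheadrightarrow} D$ I would use the short exact sequence $\ker\phi \hookrightarrow B \twoheadrightarrow D$ to obtain $r(B) - r(D) = r(\ker\phi) - r(0)$, which vanishes exactly when $\ker\phi = 0$, i.e.\ when $\phi$ is monic, hence an isomorphism.

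The step I expect to require the most care is the strictness argument, specifically the translation between rank equalities and invertibility: one must argue cleanly that $r(B/A) = r(0)$ implies $B/A = 0$ and hence that $A \hookrightarrow B$ is an isomorphism, and dually that $r(\ker\phi) = r(0)$ implies $\ker\phi = 0$ and hence that $\phi$ is an isomorphism. This relies on the Abelian-category facts that a monomorphism with trivial cokernel and an epimorphism with trivial kernel are isomorphisms. The computations themselves are routine once the correct short exact sequences are chosen.
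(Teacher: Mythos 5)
Your proposal is correct and follows essentially the same route as the paper's proof: embed monomorphisms and epimorphisms in short exact sequences, apply the fiber-wise identity $r(A)+r(D)=r(B)+r(0)$ from \cref{prop:fiberwiseshortsequence}, and derive strictness from the equality case forcing the cokernel (resp.\ kernel) to be null. The only difference is that you spell out the appeal to \cref{prop:rankfromfibers} and the Abelian-category facts (mono with trivial cokernel is iso, epi with trivial kernel is iso) that the paper leaves implicit.
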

\begin{proof}
    We can embed any monomorphism or epimorphism in a short exact sequence $A \hookrightarrow B \twoheadrightarrow D$ where, as $r$ is fiber-wise, $r(0) + r(B) = r(A) + r(D)$. As $r(0) \le r(D)$, $r(B) \ge r(A)$. Similarly, as $r(0) \le r(A)$, $r(B) \ge r(D)$.

    To prove strictness, let us assume for example that $r(A) = r(B)$, then $r(D) = r(0)$ therefore $D$ is null so $A \simeq B$. Similarly if $r(B) = r(D)$ then $r(A) = r(0)$ therefore $A$ is null so $B \simeq D$.
\end{proof}

\subsubsection*{Examples of fiber-wise rank functions}

The cardinality function $|-| :\mathbf{FinSet} \to \mathbb{Z}$ is fiber-wise (the terminal object $\pt$ being the singleton). Indeed given a surjective map of sets $A \overset{f}{\twoheadrightarrow} D$:
\[
    |A|-|D| = \sum_{d \in D} (|f^{-1}(d)| - 1)
\]

$|-|$ is clearly a rank function: it is nondecreasing on monomorphisms and nonincreasing on epimorphisms. $|-|$ is also strict, as a monomoprhism (or an epimorphism) between two sets with the same number of elements is invertible.

The category $\mathbf{FinVec}_{\mathbb K}$ is regular and the dimension function is a strict fiber-wise rank. This case can be generalized to a wide variety of Abelian categories. We recall from~\cite[Sect.~1]{etingof_tensor_2015} that, in an Abelian category an object $X$ has finite length if there exists a series of inclusions:
\[
    0 \simeq X_0 \hookrightarrow X1 \hookrightarrow \dots \hookrightarrow X_n \simeq X
\]
where all quotients $X_i / X_{i-1}$ are simple. If such series exists, then $length(X) = n$%they all have the same length $n$ which is the length of $X$.
If all objects in an Abelian category have finite lenght, we say that the category has finite length.

The function $length$ is $0$-based and, by~\cite[Sect.~1]{etingof_tensor_2015}, for all short exact sequence $A \hookrightarrow B \twoheadrightarrow D$, we have $length(B) = length(A)+length(D)$ so, by~\cref{prop:fiberwiseshortsequence}, $length$ is fiber-wise.

\begin{proposition}
    \label{prop:length}
    Given $\mathbf D$ an Abelian category of finite length, the function $$length:\textnormal{Obj}(\mathbf D) \to \mathbb{Z}$$ is a strict $0$-based fiber-wise rank.
\end{proposition}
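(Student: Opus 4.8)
The plan is to reduce the entire statement to \cref{prop:abelianfiberwise}, which already bundles the verification of all three rank axioms together with strictness for fiber-wise functions on an Abelian category. Since $\mathbf{D}$ is Abelian, it will suffice to check three things: that $length$ is fiber-wise, that $length(0) \le length(D)$ for every object $D$, and that $length(D) = length(0)$ only when $D$ is null. The $0$-based property I would dispatch separately and immediately: the trivial one-term series exhibits $length(0) = 0$.

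First I would note that fiber-wiseness has essentially been recorded already in the text preceding the statement. By the Jordan--H\"older result cited from Etingof et al., every short exact sequence $A \hookrightarrow B \twoheadrightarrow D$ satisfies $length(B) = length(A) + length(D)$; since $length(0) = 0$, this is exactly the additivity relation $length(A) + length(D) = length(B) + length(0)$ that \cref{prop:fiberwiseshortsequence} identifies with the fiber-wise condition. Next I would verify the two inequalities feeding \cref{prop:abelianfiberwise}. Nonnegativity, $length(D) \ge 0 = length(0)$, is immediate because $length(D) = n$ is by definition the number of proper inclusions in a composition series, a nonnegative integer. For the strictness hypothesis, I would argue that if $length(D) = 0$ then the defining series has $n = 0$, so it collapses to $0 \simeq X_0 \simeq D$, forcing $D \simeq 0$; hence $length(D) = length(0)$ occurs precisely when $D$ is null.

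With these three ingredients in hand, \cref{prop:abelianfiberwise} directly yields that $length$ is a strict fiber-wise rank, and adjoining the already-checked equality $length(0) = 0$ gives the full claim that $length$ is a strict, $0$-based, fiber-wise rank. I do not anticipate any genuine obstacle: the single nontrivial input is the well-definedness and additivity of $length$, i.e.\ the Jordan--H\"older theorem, which is imported as a citation rather than reproved, and every remaining step is a one-line check enabled by the machinery of \cref{prop:abelianfiberwise} and \cref{prop:fiberwiseshortsequence}.
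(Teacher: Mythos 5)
Your proposal is correct and follows essentially the same route as the paper: the paper likewise establishes fiber-wiseness via the additivity of $length$ on short exact sequences (citing Etingof et al.\ and \cref{prop:fiberwiseshortsequence}) in the text preceding the proposition, and then in the proof itself simply notes nonnegativity and that $length(X)=0$ if and only if $X$ is null before invoking \cref{prop:abelianfiberwise}. Your write-up just makes explicit the one-line checks (the trivial series for $length(0)=0$, the collapse of a length-zero series) that the paper leaves implicit.
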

\begin{proof}
    $length$ is nonnegative, and $length(X) = 0$ if and only if $X$ is initial so, by~\cref{prop:abelianfiberwise}, it is a strict $0$-based fiber-wise rank.
\end{proof}

The rank function $length$ is is characterized by the following two features:
\begin{enumerate*}
    \item It is $0$-based and fiber-wise.
    \item It has value $1$ on simple objects.
\end{enumerate*}
Furthermore $length$ and $|-|$ share the additive property, i.e. given two objects $X, Y \in \textnormal{Obj}(\Rcat)$, $r(X \amalg Y) = r(X) + r(Y)$.

\begin{remark}
    Even though the category $\mathbf{FinMod}_\mathbb{Z}$ of finitely generated Abelian groups (i.e. finitely generated $\mathbb Z$-modules) does not have finite length, we have an image factorization-preserving functor $-\otimes_\mathbb{Z} \mathbb{Q}:\mathbf{FinMod}_\mathbb{Z}\to\mathbf{FinVec}_\mathbb{Q}$. By~\cref{prop:exact}, the rank function $dim:\Obj(\mathbf{FinVec}_\mathbb{Q}) \to \mathbb Z$ induces a rank function on $\mathbf{FinMod}_\mathbb{Z}$, which coincides with the rank of finitely generated Abelian groups.
\end{remark}

\section{Categorical persistence}
\label{sec:categoricalpersistence}

In general, given an arbitrary functor $\Psi$ from a source category $\Cat$ to a regular target category $\Rcat$ equipped with a rank $r$, we can not naturally define a rank on $\Cat$, unless $\Cat$ is regular and $\Psi$ preserves the image factorization (i.e. monomorphisms and regular epimorphisms), see~\cref{prop:exact}. Unfortunately, these assumptions do not hold in many common cases: for instance the category $\mathbf{Top}$ is not regular and, even though $\mathbf{FinSimp}$ is regular, no homology functor $H_k:\mathbf{FinSimp}\to\mathbf{FinVec}$ preserves the image factorization. However, we can still define an integer-valued function on the morphisms of $\Cat$, as a \textit{categorical persistence function}. While categorical persistence functions have very mild assumptions, they will be sufficient to guarantee the classical constructions and results of persistent homology. See~\cref{tab:analogies} for an intuitive comparison between the classical framework and ours.

\subsection{Categorical persistence functions}

In persistent homology, the functor $H_k$ maps a filtration of topological spaces and a field of coefficients $\mathbb K$, into a sequence of $\mathbb K$-vector spaces $V_u$ equipped with maps $V_u \to V_v$ for $u \le v$. The persistent homology group and persistent Betti number correspond to the image of $V_u \rightarrow V_v$ and its rank, respectively. The aim of this section is to extend this procedure to arbitrary categories.

First, we extend the notion of persistence function from~\cite{bergomi_beyond_2019}, which in turn generalizes persistent Betti number functions.
\begin{definition}
    \label{def:persistence}
    Let $\mathbf{D}$ be a category. We say that a lower-bounded function $p:\textnormal{Morph}(\mathbf{D}) \to\mathbb{Z}$ is a categorical persistence function if, for all $u_1 \to u_2 \to v_1 \to v_2$, the following inequalities hold:

\begin{enumerate}
    \item $p(u1\to v1) \le p(u2\to v1)$ and $p(u2\to v2) \le p(u2\to v1)$.
    \item $p(u2\to v1) - p(u1\to v1) \ge p(u2\to v2) - p(u1\to v2)$.
\end{enumerate}

\end{definition}

If $\mathbf D$ is the poset category $\Rl$ whose objects are real numbers, with a unique morphism from $u$ to $v$ if $u \le v$, then we recover the definition of persistence function from~\cite{bergomi_beyond_2019}. In some sense, this is a categorification~\cite{baez_categorification_1998} of that notion.

% TODO: settle on an expression for functor-friendly here
\begin{proposition}
    \label{prop:compose}
    Given a functor $F:\mathbf{C} \to \mathbf{D}$ and a categorical persistence function $p$ for $\mathbf D$, $p\circ F$ is a categorical persistence function for $\mathbf{C}$.
\end{proposition}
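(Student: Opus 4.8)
The plan is to verify directly that $p \circ F$, regarded as the map on morphisms sending a morphism $m$ of $\mathbf{C}$ to $p(F(m)) \in \mathbb{Z}$, satisfies the two conditions of~\cref{def:persistence}. The only structural ingredient I will need is that $F$, being a functor, preserves composition (and identities); lower-boundedness of $p \circ F$ is immediate, since $p$ is lower-bounded by some $M$ and every value $p(F(m))$ of $p \circ F$ is among the values of $p$, hence also bounded below by $M$.

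First I would fix an arbitrary chain of composable morphisms $u_1 \to u_2 \to v_1 \to v_2$ in $\mathbf{C}$ and apply $F$ to obtain the chain $F(u_1) \to F(u_2) \to F(v_1) \to F(v_2)$ in $\mathbf{D}$. The key observation is that, for any two objects $x, y$ occurring in this chain, the morphism $x \to y$ appearing in~\cref{def:persistence} denotes the composite determined by the chain, and since $F$ preserves composition we have $F(x \to y) = \bigl(F(x) \to F(y)\bigr)$, the corresponding composite in the image chain. Hence $(p \circ F)(x \to y) = p\bigl(F(x) \to F(y)\bigr)$ for every pair $x,y$ occurring in the two conditions.

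Then I would simply transport the inequalities. Applying condition 1 of~\cref{def:persistence} to the $\mathbf{D}$-chain $F(u_1) \to F(u_2) \to F(v_1) \to F(v_2)$ gives $p\bigl(F(u_1) \to F(v_1)\bigr) \le p\bigl(F(u_2) \to F(v_1)\bigr)$ and $p\bigl(F(u_2) \to F(v_2)\bigr) \le p\bigl(F(u_2) \to F(v_1)\bigr)$, which by the identification above are precisely condition 1 for $p \circ F$ on the original $\mathbf{C}$-chain. Likewise condition 2 for $p$ on the image chain reads off verbatim as condition 2 for $p \circ F$, since each of the four terms translates through the same identification.

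There is essentially no obstacle here: the statement is a formal consequence of functoriality. The only point requiring minor care is notational — ensuring that each symbol $u_i \to v_j$ is consistently read as the unique composite determined by the chain, so that its $F$-image is unambiguously the composite $F(u_i) \to F(v_j)$. Once this bookkeeping is fixed, both conditions transfer without any computation.
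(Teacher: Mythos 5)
Your proof is correct and follows essentially the same route as the paper's: apply $F$ to the chain $u_1 \to u_2 \to v_1 \to v_2$, use functoriality to identify each composite $F(u_i \to v_j)$ with the composite $F(u_i) \to F(v_j)$, and read off both conditions of~\cref{def:persistence} from the image chain. Your additional remarks on lower-boundedness and on the notational identification of composites are fine points the paper leaves implicit.
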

\begin{proof}
    Given $u_1 \to u_2 \to v_1 \to v_2$ we have, by functoriality, $F(u_1) \to F(u_2) \to F(v_1) \to F(v_2)$, so:
    \[
        (p\circ F)(u_1 \to v_1) = p(F(u_1 \to v_1)) \le p(F(u_2 \to v_1)) = (p\circ F)(u_2 \to v_1)
    \]
    All other inequalities are proved in an analogous way.
\end{proof}

\begin{remark}
  All functors we consider are covariant. Contravariant functors, if used, will be written in the covariant form $F:\Cat^{op}\to\mathbf{D}$.
\end{remark}

Classical persistent homology is defined in terms of dimensions of images of maps between vector spaces. The same construction holds in this setting. Given a regular category $\mathbf R$, we denote by $im: \textnormal{Morph}(\Rcat) \to \textnormal{Obj}(\Rcat)$ the map associating to each morphism its image. Given a rank function $r$ on $\Rcat$ and a functor $F: \mathbf{C} \to \mathbf{R}$, the function $r\circ im\circ F: \textnormal{Morph}(\mathbf C) \to \mathbb{Z}$ is a categorical persistence function. We will prove it in the following two propositions.

\begin{proposition}
    \label{prop:persistencefromrank}
    Given a ranked category $(\mathbf{R}, r)$, $r\circ im$ defines categorical persistence function on $\mathbf{R}$.
\end{proposition}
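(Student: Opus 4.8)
The plan is to verify that $p = r\circ im$ satisfies the two defining inequalities of \cref{def:persistence}; lower-boundedness is inherited immediately, since $r$ is lower-bounded and hence so is $r\circ im$. Fix a chain $u_1 \to u_2 \to v_1 \to v_2$ and write $a,b,c$ for the three morphisms, so that $im(u_i\to v_j)$ denotes the image of the relevant composite. The single tool that drives everything is a lemma on images of composites: for composable $X\xrightarrow{f}Y\xrightarrow{g}Z$ in a regular category there is a canonical factorization $im(f)\twoheadrightarrow im(g\circ f)\hookrightarrow im(g)$ in which the first map is a regular epimorphism and the second a monomorphism.

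I would prove this lemma by composing the two image factorizations $X\twoheadrightarrow im(f)\hookrightarrow Y$ and $Y\twoheadrightarrow im(g)\hookrightarrow Z$, factoring the induced map $im(f)\hookrightarrow Y\twoheadrightarrow im(g)$ as $im(f)\twoheadrightarrow W\hookrightarrow im(g)$, and observing that $g\circ f$ then decomposes as a composite of regular epimorphisms $X\twoheadrightarrow im(f)\twoheadrightarrow W$ followed by a composite of monomorphisms $W\hookrightarrow im(g)\hookrightarrow Z$. Since in a regular category regular epimorphisms are closed under composition (being the left class of the (regular epi, mono) factorization system; see \cref{def:reg_epimorphism}), this is the image factorization of $g\circ f$, so $W\cong im(g\circ f)$, which is the lemma.

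The two inequalities of condition~1 are then immediate. Applying the lemma to $u_1\xrightarrow{a}u_2\xrightarrow{b}v_1$ gives a monomorphism $im(u_1\to v_1)\hookrightarrow im(u_2\to v_1)$, so $r(im(u_1\to v_1))\le r(im(u_2\to v_1))$ by property~1 of the rank function; applying it to $u_2\xrightarrow{b}v_1\xrightarrow{c}v_2$ gives a regular epimorphism $im(u_2\to v_1)\twoheadrightarrow im(u_2\to v_2)$, so $r(im(u_2\to v_2))\le r(im(u_2\to v_1))$ by property~2.

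For condition~2 I would assemble a commutative square and invoke \cref{prop:nopullback}. Take the regular epimorphism $\pi:im(u_2\to v_1)\twoheadrightarrow im(u_2\to v_2)$ and the monomorphism $im(u_1\to v_1)\hookrightarrow im(u_2\to v_1)$ from the lemma, and factor their composite $im(u_1\to v_1)\to im(u_2\to v_2)$ as $im(u_1\to v_1)\twoheadrightarrow W'\hookrightarrow im(u_2\to v_2)$. The crux — the step I expect to take the most care — is identifying $W'$ with $im(u_1\to v_2)$: chasing this composite into $v_2$ shows it equals $im(u_1\to v_1)\hookrightarrow v_1\xrightarrow{c}v_2$, whose image is $im(u_1\to v_2)$, because precomposing $c\circ b\circ a$ with the regular epimorphism $u_1\twoheadrightarrow im(u_1\to v_1)$ leaves the image unchanged, and because postcomposing with the monomorphism $im(u_2\to v_2)\hookrightarrow v_2$ does not change the image object. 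With this identification the square
\[
\begin{tikzcd}
    im(u_1\to v_1) \arrow[hookrightarrow]{r} \arrow[twoheadrightarrow]{d} & im(u_2\to v_1) \arrow[twoheadrightarrow]{d}{\pi} \\
    im(u_1\to v_2) \arrow[hookrightarrow]{r} & im(u_2\to v_2)
\end{tikzcd}
\]
commutes by construction, its right edge is a regular epimorphism and its bottom edge a monomorphism, so \cref{prop:nopullback} yields exactly $r(im(u_2\to v_1))-r(im(u_1\to v_1))\ge r(im(u_2\to v_2))-r(im(u_1\to v_2))$, which is condition~2. Once the image identification is in place, \cref{prop:nopullback} is tailored precisely to close the argument without requiring the square to be a pullback.
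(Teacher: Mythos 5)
Your proof is correct and takes essentially the same route as the paper's: the paper likewise uses the monomorphism $im(u_1\to v_1)\hookrightarrow im(u_2\to v_1)$ and the regular epimorphism $im(u_2\to v_1)\twoheadrightarrow im(u_2\to v_2)$ for condition~1, and closes condition~2 with the same commutative square of images via \cref{prop:nopullback}. The only difference is that you spell out, via the composite-image lemma and the identification of $W'$ with $im(u_1\to v_2)$, the factorization facts that the paper asserts without proof.
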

\begin{proof}
    Let us consider a diagram $u_1 \to u_2 \to v_1 \to v_2$ in $\mathbf D$. Then, we have an inclusion
    $$im(u1 \to v1) \hookrightarrow im(u_2 \to v_1),$$
    and thus
    $$r(im(u_1 \to v_1)) \ge r(im(u_2 \to v_1)).$$
    Similarly, we have an epimorphism $im(u_2 \to v_1) \twoheadrightarrow im(u_2 \to v_2)$, so $r(im(u_2 \to v_1))\ge r(im(u_2\to v_2))$.
    To prove the second condition of~\cref{def:rank},  we remind that the inequality of the third condition of the same definition holds for all commutative squares and not only pullback squares. Thus, we can build the following commutative diagram:
\[
\begin{tikzcd}
    im(u_1 \to v_1) \arrow[hookrightarrow]{r}{\iota_1} \arrow[swap, twoheadrightarrow]{d}{\pi_1} & im(u_2 \to v_1) \arrow[twoheadrightarrow]{d}{\pi_2} \\
    im(u_1 \to v_2) \arrow[hookrightarrow]{r}{\iota_2} & im(u_2 \to v_2)
\end{tikzcd}
\]
where $\iota_1, \iota_2$ are monomorphisms and $\pi_1, \pi_2$ are regular epimorphisms. By the third condition of~\cref{def:rank}, we have:
\[
    r(im(u_2 \to v_1))-r(im(u_1 \to v_1)) \ge r(im(u_2 \to v_2)) - r(im(u_1 \to v_2))
\]
\end{proof}

By combining~\cref{prop:persistencefromrank} and~\cref{prop:compose}, we obtain:
\begin{proposition}
    Given a ranked category $(\mathbf R, r)$ and a functor $F: \mathbf{C} \to \mathbf{R}$, the function $r\circ im\circ F: \textnormal{Morph}(\mathbf C) \to \mathbb{Z}$ is a categorical persistence function.
\end{proposition}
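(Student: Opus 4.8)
The plan is to recognize that $r \circ im \circ F$ is literally the composite of the categorical persistence function $r \circ im$ with the functor $F$, so the statement follows by chaining the two preceding propositions. Unwinding the definition, for a morphism $f$ in $\mathbf C$ the value is $(r \circ im \circ F)(f) = r(im(F(f)))$, the rank of the image of $F(f)$ in $\mathbf R$. First I would apply~\cref{prop:persistencefromrank} to the ranked category $(\mathbf R, r)$, which tells us that $p := r \circ im : \textnormal{Morph}(\mathbf R) \to \mathbb Z$ is a categorical persistence function on $\mathbf R$; this identifies the ``rank of the image'' construction as a bona fide persistence function on the target category.

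Then I would apply~\cref{prop:compose} with this $p$ and the given functor $F : \mathbf C \to \mathbf R$. Since $p$ is a categorical persistence function for $\mathbf R$ and $F$ is a functor into $\mathbf R$, the proposition yields that $p \circ F = r \circ im \circ F$ is a categorical persistence function for $\mathbf C$, which is exactly the assertion. There is no real obstacle: the only point worth verifying is that the types match, namely that $r \circ im$ is precisely the object playing the role of $p$ in~\cref{prop:compose}, after which the result is a purely formal consequence of the two cited propositions.
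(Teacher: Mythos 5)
Your proof is correct and is exactly the paper's argument: the paper derives this proposition verbatim ``by combining~\cref{prop:persistencefromrank} and~\cref{prop:compose}'', i.e.\ first identifying $r\circ im$ as a categorical persistence function on $\mathbf R$ and then composing with $F$. Nothing is missing; the type-matching point you note is the only thing to check, and it holds.
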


Functors to $(\mathbf{FinSet}, |-|)$ allow to recover persistent 0-Betti numbers, as well as all examples of coherent sampling in~\cite{bergomi_beyond_2019} such as blocks, edge-blocks and $\cal F$-connected components. In this framework , classical persistent homology can be seen as a combination of the functor $H_k:\mathbf{FinSimp} \to \mathbf{FinVec}_{\mathbb K}$ with the fiber-wise rank function dimension.

\begin{remark}[$\Rl$-indexed diagrams]\label{rem:indexeddiagram} Classically, persistent Betti numbers, as well as persistence functions in the sense of~\cite{bergomi_beyond_2019}, are defined on $\Delta^+$, i.e. on pairs $(u, v)\in\mathbb R^2$ with $u \le v$. Categorical persistence functions, on the other hand, are defined more abstractly on $\Morph(\Cat)$. However, as $\Delta^+$ is in one-to-one correspondence with $\Morph(\Rl)$, to define a function on $\Delta^+$ from a categorical persistence function in $\Cat$, we simply need a functor $F:\Rl \to \Cat$. We denote the category of these functors as $\mathbf C^\Rl$ and call them $\Rl$-indexed diagrams in $\mathbf C$. They are analogous to filtrations with the difference that, given a $\Rl$-indexed diagram $F$, we do not require morphisms $F(u)\to F(v)$ to be monomorphisms. As an example, given a topological space $X$ and a real-valued function $f:X\to \mathbb R$, the functor
\begin{gather*}
    F: \Rl \to \mathbf{Top} \\
    u \mapsto f^{-1}((-\infty, u])
\end{gather*}
is a $\Rl$-indexed diagram in $\mathbf{Top}$, with $F(u \le v)$ given by the inclusion $f^{-1}((-\infty, u]) \subseteq f^{-1}((-\infty, v])$. Similarly, the homology in degree $k$ of the various sublevels also naturally forms a $\Rl$-indexed diagram $u \mapsto H_k(f^{-1}((-\infty, u])))$, where morphisms are no longer necessarily injective.
\end{remark}
Refer again to~\Cref{tab:analogies} for an intuitive list of analogies between the classical and proposed frameworks.
\begin{table}
  \footnotesize{\caption{From the classical to the categorical framework.\label{tab:analogies}}}
\begin{center}
\begin{tabular}{@{}ll@{}}\toprule
 \textbf{Classical framework} & \textbf{Categorical framework}\\\midrule
 Topological spaces & Source category $\mathbf C$ \\
 Vector spaces & Regular target category $\mathbf R$\\
 Dimension & Rank function on $\mathbf R$\\
 Homology functor & Arbitrary functor from $\mathbf C$ to $\mathbf R$ \\
 Filtration of topological spaces & $\Rl$-indexed diagram in $\mathbf C$ \\
 \bottomrule
\end{tabular}
\end{center}
\end{table}

\subsection{Persistence diagrams}
\label{sec:persistencediagrams}
After generalizing the main ingredients of persistence, it is important to discuss how the notion of persistence diagram can be defined in this new context. Indeed, persistence diagrams are agile tools, that allow one to easily represent the features determined by the persistence function as a multiset of two-dimensional points. This representation is suitable for both rapid visualization and comparison of filtered objects.

In the following we will work with an arbitrary category $\mathbf C$, a categorical persistence function $p:\textnormal{Morph}(\mathbf C) \to \mathbb Z $, as well as a $\Rl$-indexed diagram $F$, and the induced persistence function on $\Delta^+$:
\begin{gather*}
    p_F: \Delta^+ \to \mathbb Z \\
    (u, v) \mapsto p(F(u \le v))
\end{gather*}

To define a persistence diagram we follow the approach given in~\cite{bergomi_beyond_2019}, which in turn draws from the definition of multiplicity of~\cite{damico_natural_2010} and~\cite{frosini_size_2001}. We will limit ourselves to the tame case: to do so we will need to generalize the definition of tameness from~\cite{bubenik_categorification_2014}.

\begin{definition} \label{def:tame}~\cite[Def. 4.3]{bubenik_categorification_2014}
    Let $F \in \mathbf C ^\Rl$. Let $I \subset \R$ be an interval. We say that $F$ is constant on $I$ if for all $a \le b \in I$ we have $p_F(a, a) = p_F(a, b) = p_F(b, b)$. We call $a \in \R$ a regular value (resp. right- or left-regular) for $F$ if there is a connected neighborhood (resp. connected right or left neighborhood) $I \; \reflectbox{$\in$} \; a$ such that $F$ is constant on $I$. Otherwise we call $a$ a critical value. $F$ is tame if it has a finite number of critical values.
\end{definition}

In the classical case of finite dimensional vector spaces, the regularity condition requires that maps $F(a) \xrightarrow{\phi} F(b)$ are isomorphisms for $a, b$ in a neighborhood of a regular value (see~\cite[Def. 4.3]{bubenik_categorification_2014}). However, for a strict rank (such as $dim$ or more generally $length$) this is equivalent to our condition $r(F(a)) = r(F(b)) = r(im(\phi))$ thanks to the following lemma:

\begin{lemma}
    Let $r$ be a strict rank function and $A \xrightarrow{\phi} B$ a morphism such as $r(A) = r(B) = r(im(\phi))$. Then $\phi$ is an isomorphism.
\end{lemma}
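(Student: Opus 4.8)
The plan is to exploit the image factorization guaranteed by the regular structure of the target category together with the strictness hypotheses on $r$. Since we are working in a regular category, the morphism $\phi$ factors as $A \xrightarrowdbl{\varepsilon} im(\phi) \xhookrightarrow{\mu} B$, where $\varepsilon$ is a regular epimorphism, $\mu$ is a monomorphism, and $\phi = \mu \circ \varepsilon$. The strategy is to show that each of these two factors is an isomorphism and then compose.

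First I would examine the regular epimorphism $\varepsilon: A \twoheadrightarrow im(\phi)$. By hypothesis $r(A) = r(im(\phi))$, so the inequality $r(A) \ge r(im(\phi))$ coming from condition 2 of~\cref{def:rank} holds with equality. Since $r$ is strict, equality on a regular epimorphism forces $\varepsilon$ to be invertible. Next I would examine the monomorphism $\mu: im(\phi) \hookrightarrow B$. Again by hypothesis $r(im(\phi)) = r(B)$, so the inequality $r(im(\phi)) \le r(B)$ coming from condition 1 holds with equality, and strictness forces $\mu$ to be an isomorphism as well.

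Finally, since $\phi = \mu \circ \varepsilon$ is a composition of two isomorphisms, $\phi$ itself is an isomorphism, which is the desired conclusion. I do not expect any substantial obstacle here: the argument is a direct application of the definition of a strict rank function to the two pieces of the image factorization. The only point requiring care is to match each of the two rank equalities to the correct inequality of~\cref{def:rank}, so that strictness can legitimately be invoked on $\varepsilon$ (via the epimorphism condition) and on $\mu$ (via the monomorphism condition) respectively.
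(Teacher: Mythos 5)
Your proof is correct and follows exactly the same route as the paper: factor $\phi$ through its image, use strictness of $r$ on the regular epimorphism $A \twoheadrightarrow im(\phi)$ and on the monomorphism $im(\phi) \hookrightarrow B$ to conclude each is invertible, and compose. No gaps; your care in matching each rank equality to the correct condition of~\cref{def:rank} mirrors the paper's argument precisely.
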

\begin{proof}
    We have a natural regular epimorphism $A \overset{\chi}\twoheadrightarrow im(\phi)$ and $r(A) = r(im(\phi))$, so $\chi$ is an isomorphism. Similarly we have a natural monomorphism $im(\phi) \xhookrightarrow{\psi} B$ and $r(im(\phi)) = r(B)$ so $\psi$ is an isomorphism. $\phi = \psi \circ \chi$ is therefore also an isomorphism.
\end{proof}

We will need one more lemma to be able to use persistence functions to compute multiplicity of cornerpoints.

\begin{lemma}
    \label{lm:decreasing}
    Let $p$ be a persistence function on a category $\mathbf C$. Then, given a diagram
\[
    A \to B \to C \to D
\]
in the category $\mathbf C$, the function:
\[
    p(B \to C) - p(A \to C) - p(B \to D) + p(A \to D)
\]
is weakly decreasing in $A$ and $C$ and weakly increasing in $B$ and $D$.
\end{lemma}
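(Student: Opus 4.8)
The plan is to reduce every one of the four monotonicity assertions to a single application of the second inequality of~\cref{def:persistence}. Abbreviate the quantity of interest by
\[
    E = p(B \to C) - p(A \to C) - p(B \to D) + p(A \to D),
\]
and observe that in each case only one of the four objects moves forward along the chain; the resulting change in $E$ collapses, after cancellation, to precisely the difference between the two sides of the square inequality (axiom~2) for an appropriately chosen four-term subchain. Since that axiom asserts exactly that such a difference is nonnegative, each claim follows with the correct sign.

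Concretely, I would handle the four slots in turn. For $A$, I replace $A$ by a later object $A'$ (so that $A \to A' \to B$) and compute that $E$ decreases by $p(A' \to C) - p(A \to C) - p(A' \to D) + p(A \to D)$, which is nonnegative by axiom~2 applied to the subchain $A \to A' \to C \to D$; hence $E$ is weakly decreasing in $A$. The slot $C$ is treated in the same shape by moving $C$ forward to $C'$ and invoking axiom~2 on $A \to B \to C \to C'$, giving the nonnegative increment $p(B \to C) - p(A \to C) - p(B \to C') + p(A \to C')$, so $E$ is weakly decreasing in $C$. For $B$, moving $B$ forward to $B'$ changes $E$ by $p(B' \to C) - p(B \to C) - p(B' \to D) + p(B \to D) \ge 0$ (axiom~2 on $B \to B' \to C \to D$), so $E$ increases; and for $D$, moving $D$ forward to $D'$ changes $E$ by $p(B \to D) - p(A \to D) - p(B \to D') + p(A \to D') \ge 0$ (axiom~2 on $A \to B \to D \to D'$), again an increase. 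In every case the first inequality of~\cref{def:persistence} is not needed; only the square inequality enters.

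The proof therefore involves no genuine obstacle, only careful bookkeeping: the one subtle point is getting the orientation of ``increasing'' versus ``decreasing'' right in each slot together with the matching assignment of $(u_1, u_2, v_1, v_2)$ in axiom~2. A useful sanity check is that the two source slots $A$ and $B$ enter $E$ with opposite signs and hence must behave oppositely to each other, as must the two target slots $C$ and $D$; this is exactly the pattern the four computations produce, which gives confidence that no sign has been mishandled.
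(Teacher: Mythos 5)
Your proposal is correct and takes essentially the same approach as the paper: each of the four monotonicity claims is reduced to a single application of the second inequality of~\cref{def:persistence} on an appropriately chosen four-term chain, with exactly the subchains ($A \to A' \to C \to D$, etc.) and signs you identify. The only difference is completeness — the paper writes out only the case of the slot $A$ and leaves the other three implicit, whereas you verify all four explicitly.
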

\begin{proof}
    Let us prove that it is weakly decreasing in $A$, i.e. that given a diagram $A \to A^\prime \to B \to C \to D$, the following inequality holds
    \begin{align*}
        & p(B \to C) - p(A \to C) - p(B \to D) +  p(A \to D) \ge\\
        &p(B \to C) - p(A^\prime \to C) - p(B \to D) + p(A^\prime \to D)
    \end{align*}

\noindent Or, equivalently:
\[
     - p(A \to C) + p(A \to D) \ge - p(A^\prime \to C) + p(A^\prime \to D)
\]
which is simply the second property of~\cref{def:persistence}.
\end{proof}
\begin{definition}\label{def:corners}
    Given $u < v \in \mathbb R \cup \{-\infty, +\infty\}$ we define the multiplicity of $u, v$ as the minimum of the following expression, over $I_u, I_v$ disjoint connected neighborhoods of $u$ and $v$ respectively:
    \[
        p_{F}(\sup(I_u), \inf(I_v))-p_{F}(\inf(I_u), \inf(I_v)) - p_{F}(\sup(I_u), \sup(I_v))+p_{F}(\inf(I_u), \sup(I_v))
    \]
    We denote this quantity by $\mu(u, v)$. Whenever $\mu(u, v) > 0$ we say $(u, v)$ is a cornerpoint. By convention in this definition we consider $p_F(u, v) = \min_{x, y} p_F(x, y)$ whenever either $u$ or $v$ is not finite.
\end{definition}
\begin{remark}
    By~\cref{lm:decreasing}, the quantity:
    \[
        p_{F}(\sup(I_u), \inf(I_v))-p_{F}(\inf(I_u), \inf(I_v)) - p_{F}(\sup(I_u), \sup(I_v))+p_{F}(\inf(I_u), \sup(I_v))
    \]
    is weakly increasing in both $I_u$ and $I_v$ (where the ordering on the intervals is given by inclusion), so in practice this minimum is achieved for $I_u$ and $I_v$ sufficiently small intervals around $u$ and $v$ respectively.
\end{remark}

\begin{remark}[Cornerpoints at infinity]
%Cornerpoints at infinity are taken care of by using the same definition with neighborhoods instead of (x-h, x+h)
We identify the vertical line $\varrho$ of equation $u=k$ with the pair $(k, +\infty)$. \Cref{def:corners} allows one to define the multiplicity $\mu(\varrho)$ as the minimum of
\[
    p_{F}(\sup(I_k), v)-p_{F}(\inf(I_k), v).
\]
Whenever $\mu(\varrho)>0$, we say that $\varrho$ is a {\em cornerpoint at infinity}.
\end{remark}

\begin{definition}\label{def:persistence_diagram}
  The persistence diagram $\mathcal{D}F$ associated with the persistence function $p_F$ is the multiset of its cornerpoints, along with all the diagonal points $\{(u,u) | u\in\mathbb{R}_{\geq 0}\}$ with infinite (countable) multiplicity.
\end{definition}

It is easy to show that if $F$ is tame the persistence diagram has only a finite number of off-diagonal points. The following property is relevant when measuring distances between diagrams and will be key in the remainder of this section.

\begin{proposition}
    \label{prop:countmultiplicities}
    If $\alpha<\beta \le \gamma<\delta \in \mathbf R \cup \{+\infty\}$ are right-regular points, then sum of the multiplicities of the cornerpoints $(u, v)$ s. t. $\alpha < u \le \beta$ and $\gamma < v \le \delta$ is
\[
    p_F(\beta, \gamma)-p_F(\alpha, \gamma)-p_F(\beta, \delta)+p_F(\alpha, \delta)
\]
\end{proposition}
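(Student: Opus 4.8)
The plan is to reduce everything to the alternating sum functional
\[
    M(a,b;c,d) := p_F(b,c) - p_F(a,c) - p_F(b,d) + p_F(a,d),
\]
so that the target quantity is exactly $M(\alpha,\beta;\gamma,\delta)$, and then to show that $M$ distributes over the cornerpoints of the rectangle. The first and easiest step is \emph{additivity under subdivision}: inserting an intermediate abscissa $a<m<b$ makes the two $p_F(m,\cdot)$ terms cancel, so $M(a,m;c,d)+M(m,b;c,d)=M(a,b;c,d)$, and similarly in the second coordinate. Iterating, for any partitions $\alpha=x_0<\dots<x_m=\beta$ and $\gamma=y_0<\dots<y_n=\delta$ I get $M(\alpha,\beta;\gamma,\delta)=\sum_{i,j}M(x_{i-1},x_i;y_{j-1},y_j)$. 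Each block term is nonnegative: since $x_i\le\beta\le\gamma\le y_{j-1}$, the chain $x_{i-1}\to x_i\to y_{j-1}\to y_j$ is admissible and the second inequality of~\cref{def:persistence} gives $M(x_{i-1},x_i;y_{j-1},y_j)\ge 0$.

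Next I would choose the partition well. Working in the tame case (finitely many off-diagonal points, cf.\ the remark following~\cref{def:persistence_diagram}), I pick the partition points among right-regular values, fine enough that every block $B_{ij}=(x_{i-1},x_i]\times(y_{j-1},y_j]$ contains at most one cornerpoint and is small enough to attain the minimum in~\cref{def:corners} (legitimate because, by the remark after~\cref{def:corners}, the defining expression is weakly increasing in the neighborhoods, so the minimum is realized on small intervals). For a block containing a cornerpoint $(u,v)$ I take the neighborhoods $I_u=(x_{i-1},x_i+\eta)$ and $I_v=(y_{j-1},y_j+\eta)$; right-regularity of $x_i$ and $y_j$ shows that extending the block slightly to the right leaves $p_F$ unchanged, so $M(x_{i-1},x_i;y_{j-1},y_j)=M(x_{i-1},x_i+\eta;y_{j-1},y_j+\eta)$, and for a fine enough grid the right-hand side is exactly $\mu(u,v)$. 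The same device applied at the top-right corner $(x_i,y_j)$ of a block \emph{containing no cornerpoint} (a point of the block with $\mu=0$) forces $M_{ij}=\mu(x_i,y_j)=0$. Summing over $i,j$ then yields $M(\alpha,\beta;\gamma,\delta)=\sum_{(u,v)}\mu(u,v)$ over the cornerpoints with $\alpha<u\le\beta$ and $\gamma<v\le\delta$, as claimed.

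The main obstacle is precisely this locality step, i.e.\ turning the half-open blocks into honest two-sided neighborhoods so that the multiplicity minima are genuinely realized and empty blocks are forced to vanish. Right-regularity is exactly what makes this work: it lets me push each block's upper endpoints rightward without changing any $p_F$ value, which is why the counting window is $\alpha<u\le\beta$, $\gamma<v\le\delta$ rather than an open or closed rectangle. Two points require extra care. First, I will need the invariance of $p_F$ in its \emph{second} argument under such a right-shift at a right-regular value; this is not one of the axioms but follows by combining the two inequalities of~\cref{def:persistence} (the first gives one direction, and applying the second to a chain $w\to\beta\to\beta+\eta'\to\beta+\eta$ together with constancy on the right-neighborhood gives the other). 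Second, the blocks abutting the diagonal when $\beta=\gamma$ are delicate, since there the right-extensions of $I_u$ and $I_v$ may overlap and fail to be disjoint; I would resolve this either by shrinking the extensions asymmetrically or, more cleanly, by treating $\beta=\gamma$ as a limiting case of $\beta<\gamma$ using the monotonicity in~\cref{lm:decreasing}.
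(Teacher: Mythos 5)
Your skeleton is sound as far as it goes, and it is a legitimately different route from the paper's (the paper argues by induction on the number of cornerpoints in the box; you argue by grid subdivision). The additivity of $M$ under subdivision, the nonnegativity of each block (valid because $x_i\le\beta\le\gamma\le y_{j-1}$, so the chain $x_{i-1}\to x_i\to y_{j-1}\to y_j$ exists and the second condition of \cref{def:persistence} applies), and the right-shift invariance of $p_F$ at right-regular values --- which you correctly identify as a non-axiom and correctly derive --- are all fine. The step for blocks containing a cornerpoint is also legitimate: the cornerpoints are finitely many and known before the grid is chosen, so the grid can be adapted so that each cornerpoint's block realizes the minimum in \cref{def:corners}; together with your (correctly flagged, fixable) diagonal adjustment, this proves the inequality $M(\alpha,\beta;\gamma,\delta)\ge\sum_{(u,v)}\mu(u,v)$.

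The genuine gap is the empty-block step, and it is not a detail: it is where the content of the proposition lives. For a block $B_{ij}$ with no cornerpoint, the monotonicity remark after \cref{def:corners} gives only $M_{ij}=M(x_{i-1},x_i+\eta;y_{j-1},y_j+\eta)\ge\mu(x_i,y_j)=0$, which is the direction you already have; the assertion that the extended block \emph{realizes} the minimum defining $\mu(x_i,y_j)$ is exactly the claim $M_{ij}=0$ that you are trying to prove, and ``fine enough grid'' cannot supply it. It is circular when applied to grid corners (refining the grid moves the corners, and the scale at which a point's minimum is realized is not uniform), and it is unavailable in principle for blocks straddling a critical value. Concretely, take $p_F(u,v)=1$ if $t\le u\le v<v^*$ and $0$ otherwise (one feature born at $t$, dying at $v^*$), with $\alpha<t\le\beta\le\gamma<\delta<v^*$: the only cornerpoint is $(t,v^*)$, which lies outside the box, so every block is empty; yet any grid has a block whose first interval contains $t$, and the extension $(x_{i-1},x_i+\eta)$ of that block always crosses $t$, hence never lies inside a constancy neighborhood of its own regular corner $x_i$, no matter how fine the grid. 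What is missing is the dictionary between cornerpoints and critical values: that $\mu(u,v)=0$ whenever $u$ or $v$ is regular; that, by tameness and a chaining argument from local constancy, $F$ is constant on each interval between consecutive critical values; and hence that $M_{ij}$ equals $\mu(t,s)$ for the unique pair of critical values $(t,s)$ inside the block (zero precisely because the block contains no cornerpoint), with $M_{ij}=0$ outright when a coordinate interval contains no critical value. Alternatively, a nested-bisection argument showing that $M_{ij}>0$ forces a cornerpoint inside $B_{ij}$ would close the gap. Until one of these is supplied (and the case $\delta=+\infty$ allowed by the statement is addressed), your argument proves only the inequality, not the stated equality.
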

\begin{proof}
    By induction on the number of cornerpoints in the box.
\end{proof}

\subsection{Indecomposable persistence modules}
\label{sec:indecomposable}

Given a tame (with respect to a strict rank) $\Rl$-indexed diagram $F \in \textnormal{Obj}(\mathbf D ^\Rl)$, we can partition $\mathbb R$ into a finite number of non-empty intervals $C_1, \dots, C_n \subseteq \mathbb R$ such that $F(x\le y)$ is an isomorphism whenever $x, y$ lie in the same interval. The full subcategory of such $\Rl$-indexed diagrams is equivalent to the category of representations of the poset $(\{1,\dots,n\}, \le)$. Given a sequence of points $c_i \in C_i$, the equivalence of the two representation categories is induced by the pair of order-preserving maps:
\begin{align*}
    \begin{aligned}
        & \iota: (\{1,\dots, n \}, \le) \to \Rl \\
        & i \mapsto c_i
    \end{aligned}&&
    \mbox{ and }&&
    \begin{aligned}
        & \pi: \Rl \to (\{1,\dots, n \}, \le) \\
        & x \mapsto i \mbox{ such that } x \in C_i
    \end{aligned}
\end{align*}

If $\mathbf D$ is an Abelian category of finite length, then so is $\mathbf D^{(\{1,\dots,n\},\le)}$. Indeed, we can bound the length of any $F \in \mathbf D^{(\{1,\dots,n\},\le)}$ as follows:
\[
    length(F) \le \sum_{i = 1}^n length(F(i))
\]

By Krull-Schmidt theorem~\cite{atiyah_krull-schmidt_1956}, $F$ can then be decomposed as direct sum of indecomposable objects
\[
    F = \bigoplus_{k\in K} I_k
\]

Indecomposable objects in $\mathbf D^{(\{1,\dots,n\},\le)}$ have been characterized in the case $\mathbf D = \mathbf{FinVec}_\mathbb{K}$. Indeed, let $A_n$ be the quiver having as nodes the points $\{1,\dots, n\}$ and non-trivial edges $i \to i+1$ for $i \in \{1, \dots, n-1\}$. Then, $A_n$ has trivially the same representations of $(\{1,\dots,n\}, \le)$ and is one of the ADE Dynkin diagrams for which Gabriel's theorem~\cite{gabriel_unzerlegbare_1972} can characterize all indecomposable representations. See~\cite{oudot_persistence_2015} for a treatment of persistence homology that takes Gabriel's theorem and Krull-Schmidt theorem as starting points. We can not, unfortunately, use Gabriel's theorem as we wish to work with a more general $\mathbf D$, but we will provide an equivalent classification for the quiver $A_n$ and $\mathbf D$ semisimple (\cref{def:semisimple_cat}). To do so, we will need to generalize $\cite[Def.~4.1]{bubenik_categorification_2014}$.

\begin{definition}\cite[Def.~4.1]{bubenik_categorification_2014}
    Given a semisimple Abelian category $\mathbf D$, a simple object $S\in\textnormal{Obj}(\mathbf D)$ and an interval $I \subseteq \mathbb R$ we define the diagram $\chi_{I, S} \in \mathbf{D}^\Rl$ as:
\begin{align*}
    \chi_{I, S}(a) = \begin{cases}{}
        S&\text{if } a \in I\\
        0 &\text{otherwise}
    \end{cases}&&
    \mbox{ and }&&
    \chi_{I, S}(a \le b) = \begin{cases}{}
        \textnormal{Id}_S&\text{if } a, b \in I\\
        0 &\text{otherwise}
    \end{cases}
\end{align*}
    When working in $\mathbf D^{(\{1,\dots,n\},\le)}$ we abuse of the same notation and write:

\[
    \chi_{[b, d], S} = \underbrace{0 \to \dots \to 0}_{[1, b-1]} \to \underbrace{S \xrightarrow{Id} \dots \xrightarrow{Id} S}_{[b, d]} \to \underbrace{0 \to \dots \to 0}_{[d+1, n]}
\]

\noindent We say that a $F$ has finite type if $F = \bigoplus_{k \in K} \chi_{I_k, S_k}$
\end{definition}

\begin{proposition}
    \label{prop:indecomposablequiver}
    If $\mathbf D$ is semisimple, all indecomposable objects in $\mathbf D^{(\{1,\dots,n\},\le)}$ are isomorphic to an ``interval object'' of the form $\chi_{[b, d], S}$ where $S$ is a simple object.
\end{proposition}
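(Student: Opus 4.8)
The plan is to prove the stronger statement that \emph{every} object of $\mathbf{D}^{(\{1,\dots,n\},\le)}$ is isomorphic to a finite direct sum of interval modules $\chi_{[b,d],S}$ with $S$ simple. The assertion about indecomposables then follows at once, since an indecomposable object that is a direct sum of interval modules must coincide with a single one of them (and, conversely, each $\chi_{[b,d],S}$ has endomorphism ring $\operatorname{End}(S)$, a division ring by Schur's lemma and hence local, so it is genuinely indecomposable). First I would identify $\mathbf{D}^{(\{1,\dots,n\},\le)}$ with the category of representations of the linearly oriented quiver $A_n$, i.e. diagrams $V_1 \xrightarrow{f_1} V_2 \xrightarrow{f_2} \cdots \xrightarrow{f_{n-1}} V_n$: the order relations are generated by the covers $i \to i+1$, and the total order imposes no commutativity constraints, so no further relations arise.

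Next I would use semisimplicity to reduce to a single simple type. By Schur's lemma $\operatorname{Hom}(S,T)=0$ for non-isomorphic simples, so every object splits along isotypic components and every morphism respects this splitting; hence the functor category decomposes as a product $\prod_{[S]} (\mathbf{D}_{[S]})^{(\{1,\dots,n\},\le)}$ indexed by isomorphism classes of simples. An indecomposable object lives in a single factor $\mathbf{D}_{[S]}$, whose objects are sums of copies of one fixed simple $S$ and which behaves exactly as the category of finite-dimensional vector spaces over the division ring $\operatorname{End}(S)$. This is what guarantees that the interval summands produced below are monochromatic, i.e. carry a single simple $S$ with identity transition maps.

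Within such a component I would then argue by induction on $n$, peeling off interval summands. Since in a semisimple category every monomorphism and epimorphism splits, I split $V_1 = \ker f_1 \oplus C_1$; the summand $\ker f_1$, placed in position $1$ with zero map onward, detaches as a sum of copies of $\chi_{[1,1],S}$, so I may assume $f_1$ is a monomorphism. Applying the inductive hypothesis to the truncation $V_2 \to \cdots \to V_n$ decomposes it, and in particular $V_2$, into intervals; I then match the subobject $\operatorname{im} f_1 \subseteq V_2$ against this decomposition in order to extend each relevant interval one step to the left. The main obstacle is precisely this matching: the decomposition handed up by the induction need not be compatible with $\operatorname{im} f_1$, and arranging the complements chosen at successive vertices so that they assemble into honest subrepresentations is the crux of the argument. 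I expect to resolve it by choosing the splittings compatibly, from right to left, via a flag-splitting lemma — in a semisimple category any chain of subobjects $\operatorname{im}(V_i \to V_j)$ admits a compatible splitting into simples — which is where semisimplicity does the real work and lets us bypass the appeal to Gabriel's theorem.
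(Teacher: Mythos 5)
Your overall route---reduce to a single isotypic component via Schur's lemma, then induct on $n$ peeling off interval summands---is viable and genuinely different from the paper's argument, but it has a gap at exactly the step you yourself call the crux, and the lemma you invoke does not close it. The ``flag-splitting lemma'' you cite concerns a \emph{single chain} of subobjects $\operatorname{im}(V_i\to V_j)$; however, the configuration arising in your inductive step at vertex $2$ is the subobject $W=\operatorname{im}f_1$ together with the chain of kernels $\ker(V_2\to V_j)$, $j=2,\dots,n$, and $W$ is in general not comparable with these kernels, so the relevant family is not a chain (note also that it is the kernel flag, not the image flag you name, that determines where an interval summand through $V_2$ must die). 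What your step actually needs is: (i) a splitting of $V_2$ into simples adapted \emph{simultaneously} to the kernel chain and to $W$ (a two-filtration refinement, true over the division ring $\operatorname{End}(S)$ but neither stated nor proved by you), and (ii) a proof that such a re-chosen splitting of $V_2$ regenerates an interval decomposition of the whole truncation---the cross-vertex compatibility that you flag as ``the main obstacle'' and then never resolve. Point (ii) is fixable: by the inductive decomposition, the subrepresentation $j\mapsto\operatorname{im}(V_2\to V_j)$ is a direct summand of the truncation whose complement is already a sum of intervals supported in $[3,n]$, and each line $L_\alpha$ of an adapted splitting generates an interval subrepresentation, these being jointly independent by a length count, since $\operatorname{length}\operatorname{im}(V_2\to V_j)=\operatorname{length}V_2-\operatorname{length}\ker(V_2\to V_j)$ equals the number of lines not killed by $V_2\to V_j$. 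But as written, your proposal defers the entire content of the theorem to an unproven and inapplicable lemma.

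For comparison, the paper organizes the proof so that no matching against a pre-existing decomposition is ever needed. It takes a minimal counterexample $F$ on $\{1,\dots,n\}$, shows $F(1),F(n)\not\simeq 0$ and that $\phi=F(n-1\le n)$ is epi but not mono, and then splits along the \emph{single} chain of kernels of the maps into the last vertex: $F(i)=\ker(F(i\le n))\oplus C_i$, with complements chosen recursively from left to right so that $F(i-1\le i)(C_{i-1})\subseteq C_i$ (possible in a semisimple category because $F(i-1\le i)(C_{i-1})\cap\ker(F(i\le n))=0$). This exhibits $F$ as a direct sum of two nonzero subrepresentations, a contradiction; only one-subobject splittings per vertex are ever required. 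If you want to keep your stronger statement (every object is a sum of intervals, which as a bonus recovers the paper's finite-type theorem for this poset without invoking Krull--Schmidt), you must actually supply (i) and (ii); otherwise the paper's kernel-flag argument is the cleaner repair.
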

\begin{proof}
We proceed by contradiction. Let us take the smallest $n \in \mathbb N$ for which this does not hold and an indecomposable $F \in \mathbf D ^ {(\{1,\dots,n\},\le)}$ not isomorphic to any $\chi_{[b, d], S}$. $F(1) \not\simeq 0$ and $F(n) \not\equiv 0$ as otherwise we could find a counter-example for $\mathbf D ^ {(\{1,\dots,n-1\},\le)}$. Similarly $\phi = F(n-1 \le n)$ cannot be an isomorphism, as otherwise we would have a counter-example in $\mathbf D ^ {(\{1,\dots,n-1\},\le)}$. $\phi$ must be epi, otherwise, we could write $F(n) = im(\phi) \oplus C$ with $C \not\simeq 0$ and $F$ would be the direct sum of:
\begin{align*}
    i \mapsto \begin{cases}
        im(\phi) &\text{ if } i = n \\
        F(i) &\text{ otherwise }
    \end{cases}&&
    \mbox{ and }&&
    i \mapsto \begin{cases}
          C &\text{ if } i = n \\
          0 &\text{ otherwise}
      \end{cases}
\end{align*}

\noindent So, necessarily $\phi$ is not monic, as in an Abelian category morphisms that are both monic and epic are isomorphisms. We can decompose each $F(i)$ starting from $i=1$ and proceeding recursively, by setting $F(i) = ker(F(i\leq n))\oplus C_i$, for every $i\in\{1,2,\dots, n-1\}$, where we can take $C_i$ such that $F(i-1 \le i)(C_{i-1}) \subseteq C_i$. We can then decompose $F$ as a direct sum of:

\begin{align*}
    i \mapsto \begin{cases}
        F(n) &\text{ if } i = n \\
        C_i &\text{ otherwise}
    \end{cases}&&
    \mbox{ and }&&
    i \mapsto \begin{cases}
        0 &\text{ if } i = n \\
        ker(F(i \le n)) &\text{ otherwise}
    \end{cases}
\end{align*}

By assumption $F(n) \not\simeq 0$ and $ker(F(n-1 \le n)) \not\simeq 0$ so this is a non-trivial decomposition which is absurd.
\end{proof}

\begin{theorem}
    \label{thm:finitetype}
    In a semisimple Abelian category equipped with the rank function $length$, a $\Rl$-indexed diagram $F$ is of finite type if and only if it is tame.
\end{theorem}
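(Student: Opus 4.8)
The plan is to prove the two implications separately, leaning on the reduction to a finite quiver sketched at the start of \cref{sec:indecomposable}, the classification of indecomposables in \cref{prop:indecomposablequiver}, and the strictness of $length$ from \cref{prop:length}. For the forward implication, assume $F$ is tame, so it has only finitely many critical values. The first step is the numerical translation of \cref{def:tame}: since $p_F(a,a) = length(F(a))$ and $p_F(a,b) = length(im(F(a \le b)))$, the requirement that $F$ be constant on an interval $I$ reads $length(F(a)) = length(im(F(a \le b))) = length(F(b))$ for all $a \le b$ in $I$, and by the lemma on strict rank functions (using that $length$ is strict) this forces every such $F(a \le b)$ to be an isomorphism. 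Hence the finitely many critical values partition $\mathbb R$ into finitely many intervals $C_1, \dots, C_n$ on which $F$ is constant, placing $F$ in the full subcategory equivalent, through $\iota$ and $\pi$, to $\mathbf D^{(\{1,\dots,n\},\le)}$. As that category has finite length, Krull--Schmidt decomposes $F$ into a \emph{finite} direct sum of indecomposables, each of which is an interval object $\chi_{[b,d],S}$ by \cref{prop:indecomposablequiver}. Pulling back along $\pi$ sends $\chi_{[b,d],S}$ to $\chi_{I,S}$ with $I = C_b \cup \dots \cup C_d$, again an interval of $\mathbb R$, so $F$ is a finite sum of interval objects, i.e. of finite type.

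For the reverse implication, assume $F = \bigoplus_{k \in K} \chi_{I_k, S_k}$ is of finite type with $K$ finite. Since $im$ and $length$ are additive over direct sums and each $S_k$ is simple, $p_F(u,v) = \sum_{k :\, u, v \in I_k} length(S_k)$, so $p_F$ can only change when $u$ or $v$ crosses an endpoint of one of the finitely many intervals $I_k$. Every point outside this finite set of endpoints therefore has a connected neighborhood on which $F$ is constant, leaving at most finitely many critical values; hence $F$ is tame.

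The main obstacle I expect is the forward direction, and precisely the passage from the purely numerical tameness hypothesis to actual isomorphisms of objects: the entire downstream argument (reduction to $\mathbf D^{(\{1,\dots,n\},\le)}$, Krull--Schmidt, and \cref{prop:indecomposablequiver}) presupposes that ``$F$ constant on $I$'' can be upgraded to ``$F(a \le b)$ invertible,'' which is exactly where strictness of $length$ is indispensable and would fail for a non-strict rank. A minor secondary point is verifying that the equivalence induced by $\iota$ and $\pi$ carries interval representations of the quiver back to interval objects over $\Rl$ — that is, that a union $C_b \cup \dots \cup C_d$ of consecutive blocks is still an interval — which is immediate since the $C_i$ partition $\mathbb R$ into consecutive pieces.
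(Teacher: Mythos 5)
Your proof is correct and follows essentially the same route as the paper's: use strictness of $length$ to upgrade tameness to a partition of $\mathbb R$ on which $F(x\le y)$ is an isomorphism, pass to the equivalent category of representations of $(\{1,\dots,n\},\le)$, classify the summands via \cref{prop:indecomposablequiver}, and transport interval objects back, with the converse handled by observing that non-endpoints of the intervals are regular values. If anything, you are more careful than the paper's own proof, which applies \cref{prop:indecomposablequiver} directly to $\tilde F$ as though it were indecomposable; your explicit invocation of Krull--Schmidt supplies exactly the decomposition step that the paper's argument tacitly assumes from the discussion at the start of \cref{sec:indecomposable}.
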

\begin{proof}
    If $F$ is of finite type, then all points that are not extrema of some of the intervals defining $F$ are regular, so there can only be finitely many critical values.
    Conversely, if $F$ is tame, then we can find some partition of the real line in nonempty intervals $C_1, \dots, C_n \subseteq \mathbb R$ (which we assume to be sorted, i.e. $c_i < c_j$ whenever $c_i \in C_i$, $c_j, \in C_j$ and $i < j$) such that $F(x\le y)$ is an isomorphism whenever $x, y$ lie in the same interval. We can then build $\tilde{F} \in \textnormal{Obj}(\mathbf D)^{A_n}$ by $\tilde{F}(i) = F(c_i)$ (where $c_i$ is some point in $C_i$). By~\cref{prop:indecomposablequiver} $\tilde{F} \simeq \chi_{[b, d], S}$ for some $b, d \in \{1,\dots,n\}$ and some simple object $S$, so $F \simeq \chi_{I, S}$ where $I = \cup_{i = b}^d C_i$.
\end{proof}

\subsection{Interleaving and bottleneck distances}
\label{sec:bottleneckinterleaving}

There is a natural notion of distance between $\Rl$-indexed diagram, the interleaving distance. Here we recall the categorical notion of interleaving from~\cite{bubenik_categorification_2014}, which in turn draws from~\cite{chazal_proximity_2009}. Note that here we will only consider strong interleavings, thus not considering the weaker definition provided in~\cite{chazal_proximity_2009}.

As in~\cite{bubenik_categorification_2014} we define the translation functor $T_b: \Rl \to \Rl$ as $T_b(a) = a+b$ and the natural transformation $\eta_b: Id_\Rl \to T_b$ given by $\eta_b : a\le a+b$.

Given a $\Rl$-indexed diagram $F$, $FT_\epsilon$ is simply defined by $x\mapsto F(x+\epsilon)$. We will often compose functors to the left of natural transformation (thus applying the functor to the morphism the natural transformation returns) or to the right (thus calling the natural transformation on the object returned by the functor). For example, starting from $\eta_b: Id \to T_b$, we can compose $F$ to the left and obtain a new natural transformation $F\eta_b: F \to FT_b$. Then, similarly, we can compose $T_c$ to the right and obtain $F\eta_bT_c: FT_c \to FT_{b+c}$.

\begin{definition}\cite[Def. 3.4]{bubenik_categorification_2014}
We remind that given two $\Rl$-indexed diagrams $F, G$, they are $\epsilon$-interleaved if there are natural transformations $\phi^F: F \to GT_\epsilon$ and $\phi^G: G \to FT_\epsilon$ such that:
\begin{align*}
    (\phi^G T_\epsilon)\phi^F = F\eta_{2\epsilon}
    &&
    \mbox{ and }
    &&
    (\phi^F T_\epsilon)\phi^G = G\eta_{2\epsilon}
\end{align*}
The interleaving distance $d(F, G)$ is the infimum of all $\epsilon$ values such that $F$ and $G$ are $\epsilon$-interleaved.
\end{definition}

There is a simple example coming from filtering functions. Given a topological spaces $X$ and two real-valued functions $f, g: X \to \mathbb R$, if $f$ and $g$ differ no more than $\epsilon$, i.e., for all $x \in X$, $|f(x) - g(x)| \le \epsilon$, then there is a natural $\epsilon$-interleaving between the two $\Rl$-indexed diagrams corresponding to the sublevels of $f$ and $g$ respectively.

It is natural to define persistence starting from one or sometimes more functors (see~\cref{sec:poset} for an example with two functors):
\[
    \mathbf C_0 \xrightarrow{\Psi_1} \mathbf C_1 \xrightarrow{\Psi_2} \dots \xrightarrow{\Psi_n} \mathbf C_n
\]
where $\mathbf C_0, \dots, \mathbf C_{n-1}$ are arbitrary categories, whereas $(\mathbf C_n, r)$ is a ranked category. A $\Rl$-indexed diagram $F \in \mathbf C_0^\Rl$ is mapped by the various functors $\Psi_i$ in $\Rl$-indexed diagrams
\begin{equation*}
\Psi_1(F)\in \mathbf C_1^\Rl, \dots, \Psi_n(F)\in \mathbf C_n^\Rl
\end{equation*}
Similarly an $\epsilon$-interleaving between $F, G \in \mathbf C_0^\Rl$ is mapped to $\epsilon$-interleavings between $\Psi_1(F), \Psi_1(G)$, $\Psi_2(F), \Psi_2(G)$, et cetera. As a consequence we can define a sequence of interleaving distances $d_0 \ge d_1 \ge \dots \ge d_n$ as follows:

\[
    d_i(F, G) = d_{\mathbf C_i}(\Psi_i(F), \Psi_i(G))
\]
where $d_{\mathbf C_i}$ is the interleaving distance in category $\mathbf C_i$.

Furthermore, the bottleneck distance neglects the underlying category and is defined only via the persistence diagram.

\begin{definition}\label{def:bottleneck}
Let $F, G$ be two tame $\Rl$-indexed diagrams in $\mathbf R$ and ${\cal D}F, {\cal D}G$ their persistence diagrams. The bottleneck distance between the persistence diagrams is defined as
\[
d({\cal D}F, {\cal D}G) = \inf_{\beta\in{\cal B}} \sup_{p\in {\cal D}(F)} \| p-\beta(p)\|_\infty,
\]
where ${\cal B}$ is the collection of all bijections between ${\cal D}F$ and ${\cal D}G$.
\end{definition}

We now prove that under mild hypotheses ($\mathbf C_n$ admits finite colimits) the chain of decreasing distances can be continued to include the bottleneck distance
\[
    d_0(F, G) \ge d_1(F, G) \ge \dots \ge d_n(F, G) \ge d({\cal D}F, {\cal D}G)
\]
and find examples of ranked categories that achieve the equality $d_n(F, G) = d({\cal D}F, {\cal D}G)$. In particular, this chain of inequalities grants stability in the classical sense: as noted in~\cref{rem:indexeddiagram}, given two filtering functions that differ less than $\epsilon$, the associated $\Rl$-indexed diagrams are $\epsilon$-interleaved.

To prove inequalities between interleaving and bottleneck distance, we will generalize~\cite[Lm. 4.5]{chazal_proximity_2009} to the case of persistence function on an arbitrary category and~\cite[Lm. 4.6, 4.7]{chazal_proximity_2009} from the category of vector spaces to an arbitrary category with finite colimits.

\begin{lemma}[Box lemma]
    \label{lm:boxlemma}
    Let $F, G$ be two tame $\Rl$-indexed diagrams that are $\epsilon$-interleaved. Given $\alpha < \beta < \gamma < \delta$ let $\square$ denote the region $(\alpha, \beta] \times (\gamma, \delta]$ and $\square_\epsilon$ the region $(\alpha-\epsilon, \beta+\epsilon] \times (\gamma - \epsilon, \delta + \epsilon]$. Then the sum of the multiplicities of the points of ${\cal D}F$ contained in $\square$ is smaller or equal to the sum of the multiplicities of the points of ${\cal D}G$ contained in $\square_\epsilon$.
\end{lemma}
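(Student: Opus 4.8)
The plan is to turn the geometric statement into an inequality between two alternating sums of persistence–function values—one read off from $F$, one from $G$—and then to prove that inequality by morphing the first sum into the second one entry at a time, using the interleaving maps as the elementary moves and the signed monotonicity of~\cref{lm:decreasing} to control each move.

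First I would set up the two box counts through~\cref{prop:countmultiplicities}. Because $F$ and $G$ are tame, only finitely many values fail to be right-regular, so I may replace $\alpha,\beta,\gamma,\delta$ by nearby right-regular (for $F$) values whose $\epsilon$-shifts $\alpha-\epsilon,\beta+\epsilon,\gamma-\epsilon,\delta+\epsilon$ are right-regular for $G$, chosen so that the half-open boxes $\square$ and $\square_\epsilon$ still contain exactly the same cornerpoints. \cref{prop:countmultiplicities} then identifies the sum of multiplicities of $\mathcal D F$ in $\square$ with
\[
N_F=p_F(\beta,\gamma)-p_F(\alpha,\gamma)-p_F(\beta,\delta)+p_F(\alpha,\delta)
\]
and the sum of multiplicities of $\mathcal D G$ in $\square_\epsilon$ with
\[
N_G=p_G(\beta{+}\epsilon,\gamma{-}\epsilon)-p_G(\alpha{-}\epsilon,\gamma{-}\epsilon)-p_G(\beta{+}\epsilon,\delta{+}\epsilon)+p_G(\alpha{-}\epsilon,\delta{+}\epsilon),
\]
so it suffices to show $N_F\le N_G$; here I work in the principal range $\gamma-\beta\ge 2\epsilon$, which guarantees $\beta+\epsilon\le\gamma-\epsilon$ and makes both expressions genuine box counts.

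For the core inequality I would attach to every chain $A\to B\to C\to D$ in $\mathbf R$ the quantity $\Phi(A,B,C,D)=p(B\to C)-p(A\to C)-p(B\to D)+p(A\to D)$, so that $N_F=\Phi(F(\alpha),F(\beta),F(\gamma),F(\delta))$ computed along the internal maps of $F$. The $\epsilon$-interleaving supplies morphisms $\phi^F_u\colon F(u)\to G(u+\epsilon)$ and $\phi^G_u\colon G(u)\to F(u+\epsilon)$; I would use them to pass from the $F$-tuple to the shifted $G$-tuple by four successive replacements,
\[
(F\alpha,F\beta,F\gamma,F\delta)\;\rightsquigarrow\;(G(\alpha{-}\epsilon),G(\beta{+}\epsilon),G(\gamma{-}\epsilon),G(\delta{+}\epsilon)),
\]
each of which inserts one interleaving morphism into the chain. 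In the birth slots $A,C$ I replace $F(\alpha),F(\gamma)$ by the \emph{earlier} objects $G(\alpha-\epsilon),G(\gamma-\epsilon)$ (the maps $\phi^G_{\alpha-\epsilon},\phi^G_{\gamma-\epsilon}$ point into $F(\alpha),F(\gamma)$), and since $\Phi$ is weakly decreasing in $A$ and $C$ by~\cref{lm:decreasing}, these moves can only increase $\Phi$; in the death slots $B,D$ I replace $F(\beta),F(\delta)$ by the \emph{later} objects $G(\beta+\epsilon),G(\delta+\epsilon)$ (via $\phi^F_\beta,\phi^F_\delta$), and since $\Phi$ is weakly increasing in $B$ and $D$, these too increase $\Phi$. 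Each replacement is literally the insertion of one intermediate object into an honest chain $A\to A'\to B\to C\to D$, so~\cref{lm:decreasing} applies verbatim; one only has to check at every step that the needed composite morphisms exist, which they do precisely because $\beta+\epsilon\le\gamma-\epsilon$. Chaining the four inequalities yields $N_F\le\Phi(G(\alpha{-}\epsilon),G(\beta{+}\epsilon),G(\gamma{-}\epsilon),G(\delta{+}\epsilon))$.

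The step I expect to be the real obstacle is conceptual rather than computational: the alternating sign pattern of $\Phi$ forbids any naive term-by-term comparison of $N_F$ and $N_G$, and the whole argument hinges on the observation that the four interleaving maps can be woven into a \emph{single} chain so that~\cref{lm:decreasing} delivers the signed inequality in one sweep. A secondary technical point is that the four $p$-values in the terminal $\Phi$ are a priori ranks of the \emph{composite} morphisms produced by the chaining, whereas $N_G$ uses the \emph{canonical} structure maps $G(s\le t)$; to close this gap I would invoke the interleaving identities $(\phi^GT_\epsilon)\phi^F=F\eta_{2\epsilon}$ and $(\phi^FT_\epsilon)\phi^G=G\eta_{2\epsilon}$ together with naturality of $\phi^F,\phi^G$, via a direct diagram chase showing that each such composite $G(s)\to G(t)$ collapses to $G(s\le t)$, so that the terminal $\Phi$ equals $N_G$ on the nose (the intermediate $\Phi$'s need not be canonical, which is harmless since~\cref{lm:decreasing} only ever compares $\Phi$ along a fixed chain). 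This gives $N_F\le N_G$. Finally, the degenerate range $\gamma-\beta<2\epsilon$, in which $\square_\epsilon$ crosses the diagonal and the middle link $G(\beta+\epsilon)\to G(\gamma-\epsilon)$ is unavailable, falls outside this scheme and would need a short separate treatment; since cornerpoints lie strictly above the diagonal I expect it to reduce to the boundary case $\beta+\epsilon=\gamma-\epsilon$, but it is not the essential difficulty.
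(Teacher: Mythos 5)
Your core argument is exactly the paper's proof: after reducing to right-regular corner values (the paper perturbs by $h$ and lets $h \to 0^+$, you preserve the boxes exactly; both are legitimate under tameness), the paper uses \cref{prop:countmultiplicities} to turn both box counts into alternating sums of persistence-function values, forms the single commutative chain
\[
G(\alpha-\epsilon) \to F(\alpha) \to F(\beta) \to G(\beta+\epsilon) \to G(\gamma-\epsilon) \to F(\gamma) \to F(\delta) \to G(\delta+\epsilon),
\]
and applies \cref{lm:decreasing} once per slot --- precisely your four replacement moves, with the same sign bookkeeping. Your ``composite versus canonical morphism'' worry is settled exactly as you predict: the diagram above commutes by naturality of $\phi^F, \phi^G$ and the interleaving identities, so every induced map $G(s) \to G(t)$ in the chain is the structure map $G(s \le t)$.

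The one genuine flaw is your treatment of the degenerate range $\beta+\epsilon > \gamma-\epsilon$. The reduction you suggest to the boundary case $\beta+\epsilon = \gamma-\epsilon$ cannot work: an $\epsilon$-interleaving does not yield an $\epsilon'$-interleaving for $\epsilon' < \epsilon$, and no choice of sub-box of $\square_\epsilon$ restores the missing middle map, since any chain of the required shape forces $\beta+\epsilon \le \gamma-\epsilon$. But no such argument is needed, and this is where the paper disposes of the case in one line: by \cref{def:persistence_diagram} the persistence diagram contains every diagonal point with infinite multiplicity, and when $\beta + \epsilon > \gamma - \epsilon$ the region $\square_\epsilon$ meets the diagonal, so the right-hand side of the inequality is $+\infty$ and the lemma holds trivially. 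A second, smaller omission: the paper also covers $\delta = +\infty$ (needed for cornerpoints at infinity) by running the same argument on the shorter chain that drops $F(\delta)$ and $G(\delta+\epsilon)$; your write-up is silent on this case.
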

\begin{proof}
    As in~\cite[Lm. 4.5]{chazal_proximity_2009} we notice that, if $\beta+\epsilon > \gamma - \epsilon$, then $\square_\epsilon$ intersects the diagonal and so the total multiplicity of ${\cal D}G$ intersected with the diagonal is $\infty$, so we can assume $\beta+\epsilon \le \gamma - \epsilon$.

    As $F$ and $G$ are $\epsilon$-interleaved, we have the commutative diagram:

\[
\begin{tikzcd}[column sep=small]
    & F(\alpha) \arrow{r} & F(\beta) \arrow{rd} \arrow{rrr} & & & F(\gamma) \arrow{r} & F(\delta) \arrow{rd} &\\
    G(\alpha-\epsilon) \arrow{ru} \arrow{rrr} & & & G(\beta+\epsilon) \arrow{r} & G(\gamma-\epsilon) \arrow{ru} \arrow{rrr} & & & G(\delta+\epsilon)
\end{tikzcd}
\]

from which we can consider the sequence of morphisms

\[
    G(\alpha-\epsilon) \to F(\alpha) \to F(\beta) \to G(\beta+\epsilon) \to G(\gamma-\epsilon) \to F(\gamma) \to F(\delta) \to G(\delta+\epsilon)
\]

Let us first assume that $\alpha, \beta, \gamma, \delta$ are all right-regular values for $F$ and $\alpha - \epsilon, \beta + \epsilon, \gamma - \epsilon, \delta + \epsilon$ are all right-regular values for $G$. Then by~\cref{prop:countmultiplicities}, we can compute the sum of the multiplicities of the points of ${\cal D}F$ and ${\cal D}G$ using the categorical persistence function $p$ at the corners of the respective regions. Therefore we simply need to prove:
\[
    \begin{aligned}
    &p(G(\beta + \epsilon \le \gamma - \epsilon)) - p(G(\alpha-\epsilon \le \gamma - \epsilon)) - p(G(\beta + \epsilon \le \delta + \epsilon)) + p(G(\alpha-\epsilon \le \delta + \epsilon)) \ge \\
    &p(F(\beta \le \gamma )) - p(F(\alpha \le \gamma )) - p(F(\beta \le \delta )) + p(F(\alpha \le \delta ))
    \end{aligned}
\]

The inequality can be proven by repeatedly applying~\cref{lm:decreasing}. A smaller diagram, not including $F(\delta)$ and $G(\delta+\epsilon)$, can be used to prove the case $\delta = +\infty$.

If some of $\alpha, \beta, \gamma, \delta$ is not right-regular for $F$ or some of $\alpha - \epsilon, \beta + \epsilon, \gamma - \epsilon, \delta + \epsilon$ is not right-regular for $G$, we can simply prove the inequality for $\alpha^\prime, \beta^\prime, \gamma^\prime, \delta^\prime = \alpha + h, \beta + h, \gamma + h, \delta + h$, where $h$ is such that $\alpha^\prime, \beta^\prime, \gamma^\prime, \delta^\prime$ are right-regular points for $F$ and $\alpha^\prime - \epsilon, \beta^\prime + \epsilon, \gamma^\prime - \epsilon, \delta^\prime + \epsilon$ are right-regular for $G$. Taking the limit for $h \to 0^+$ ends the proof.
\end{proof}

\begin{lemma}[Interpolation lemma]
    \label{lm:interpolate}
    Let $\mathbf C$ be a category with finite colimits. If $F, G \in \mathbf C^\Rl$ are $\epsilon$-interleaved, there exists an interpolation $\tilde{H}_s$ for all $s \in [0, \epsilon]$ such that: $F$ and $\tilde{H}_s$ are $s$-interleaved, $G$ and $\tilde{H}_s$ are $(\epsilon-s)$-interleaved, $\tilde{H}_s$ and $\tilde{H}_{s^\prime}$ are $|s-s^\prime|$-interleaved.
\end{lemma}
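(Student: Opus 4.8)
The plan is to repackage the $\epsilon$-interleaving as a single functor out of a two-dimensional index poset, and then fill in the interior by a colimit that, crucially, only ever requires \emph{finite} colimits. Concretely, let $\mathbb{P}$ be the poset whose underlying set is $\R \times [0, \epsilon]$, with $(a, s) \le (b, s')$ precisely when $b - a \ge |s - s'|$; its two boundary lines $s = 0$ and $s = \epsilon$ form a full subposet $\mathbb{E} \cong \R \times \{0, \epsilon\}$. First I would observe that an $\epsilon$-interleaving of $F$ and $G$ is exactly the data of a functor $H \colon \mathbb{E} \to \mathbf{C}$ with $H(-, 0) = F$ and $H(-, \epsilon) = G$: the natural transformations $\phi^F, \phi^G$ supply the images of the cross relations $(a, 0) \le (a + \epsilon, \epsilon)$ and $(a, \epsilon) \le (a + \epsilon, 0)$, and the triangle identities $(\phi^G T_\epsilon)\phi^F = F\eta_{2\epsilon}$ and $(\phi^F T_\epsilon)\phi^G = G\eta_{2\epsilon}$ are precisely the statements that $H$ respects the composite relations $(a, 0) \le (a + 2\epsilon, 0)$ and $(a, \epsilon) \le (a + 2\epsilon, \epsilon)$.

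Next I would extend $H$ to a functor $\hat H \colon \mathbb{P} \to \mathbf{C}$ by left Kan extension along the inclusion $\mathbb{E} \hookrightarrow \mathbb{P}$, and set $\tilde H_s := \hat H(-, s)$. The value $\hat H(t, s)$ is the colimit of $H$ over the down-set of $(t, s)$ inside $\mathbb{E}$, which consists of the two rays $\{(a, 0) : a \le t - s\}$ and $\{(a, \epsilon) : a \le t - \epsilon + s\}$. The key point, and the reason the hypothesis of finite colimits suffices, is that this infinite down-set admits a finite cofinal subdiagram: the two maximal elements $(t - s, 0)$ and $(t - \epsilon + s, \epsilon)$, together with the two lower bounds $(t - 2\epsilon + s, 0)$ and $(t - \epsilon - s, \epsilon)$, where the former maps to the two maxima through the internal map of $F$ and through $\phi^F$, and the latter through $\phi^G$ and the internal map of $G$. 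Hence $\hat H(t, s)$ is the colimit of an explicit finite diagram built from the four objects $F(t-s)$, $G(t-\epsilon+s)$, $F(t - 2\epsilon + s)$, $G(t - \epsilon - s)$, which exists in $\mathbf{C}$; functoriality of $\hat H$ in $\mathbb{P}$ is then automatic from the universal property.

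It remains to read off the three interleaving statements. Because $\mathbb{E} \hookrightarrow \mathbb{P}$ is fully faithful, the unit of the Kan extension is invertible on $\mathbb{E}$; equivalently, when $s \in \{0, \epsilon\}$ the relevant down-set already has a maximum and the colimit degenerates to it, giving $\tilde H_0 \cong F$ and $\tilde H_\epsilon \cong G$. For any two heights $s, s'$ the relations $(t, s) \le (t + |s-s'|, s')$ and $(t, s') \le (t + |s-s'|, s)$ in $\mathbb{P}$ furnish, through $\hat H$, natural transformations $\tilde H_s \to \tilde H_{s'} T_{|s-s'|}$ and $\tilde H_{s'} \to \tilde H_s T_{|s-s'|}$; composing two such relations yields the internal relation $(t, s) \le (t + 2|s-s'|, s)$, so the composite natural transformation is exactly $\tilde H_s \eta_{2|s-s'|}$, and symmetrically. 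This is precisely the $|s - s'|$-interleaving condition. Specializing $s' = 0$ and $s' = \epsilon$, and using the identifications $\tilde H_0 \cong F$ and $\tilde H_\epsilon \cong G$, gives the $s$-interleaving with $F$ and the $(\epsilon - s)$-interleaving with $G$.

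I expect the main obstacle to be the second step: verifying that the down-set colimit is genuinely computed by the finite subdiagram, that is, checking that the four-object subposet is final in each down-set and that it degenerates correctly to a single terminal object on the two boundary lines. This cofinality is exactly what reduces the a priori infinite colimit to a finite one, and thereby makes the finite-colimit hypothesis both sufficient and, morally, necessary. The remaining verifications, namely that $H$ is well defined from the interleaving data, functoriality of $\hat H$, and the triangle identities, are then routine consequences of the poset structure of $\mathbb{P}$ and the universal property of colimits.
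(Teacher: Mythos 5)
Your proof is correct, but it takes a genuinely different route from the paper's. The paper follows the explicit construction of Chazal et al.: it sets $H_s = FT_{-s}\amalg GT_{-(\epsilon-s)}$, observes that one of the two interleaving identities fails for the coproduct alone, and then forms $\tilde H_s$ by coequalizing $d^F$ with $l^F$ and $d^G$ with $l^G$, after which it must verify by diagram chasing that the transformations $H_s \to FT_{s}$ and $H_s \to GT_{\epsilon-s}$ descend to the coequalizer. You instead encode the $\epsilon$-interleaving as a functor on the boundary subposet $\mathbb{E}\subset \mathbb{P}=\R\times[0,\epsilon]$ and define $\tilde H_s$ as a pointwise left Kan extension, securing existence by exhibiting a finite final subdiagram in each down-set (your four-object cofinality check is the crux, and it does go through: every element of the down-set lies below $m_1$ or $m_2$, and the comma posets $\{m_1\}$, $\{l_1,m_1,m_2\}$, $\{l_2,m_1,m_2\}$, or all of $D'$ are each connected). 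In fact the two constructions yield canonically isomorphic objects: the colimit of your diagram
\[
F(t-2\epsilon+s)\;\rightrightarrows\; F(t-s)\amalg G(t-\epsilon+s)\;\leftleftarrows\; G(t-\epsilon-s)
\]
is exactly the paper's coproduct-followed-by-coequalizers. What your approach buys is that functoriality of $\hat H$, the boundary identifications $\tilde H_0\cong F$, $\tilde H_\epsilon\cong G$, and all three interleaving statements fall out of the universal property and the poset structure of $\mathbb{P}$, eliminating the paper's diagram chases entirely; what the paper's approach buys is an explicit object-level description that stays close to the classical reference and avoids invoking Kan extensions and cofinality, which matters for readers who want the construction rather than its abstract characterization. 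One small caveat: your opening claim that an $\epsilon$-interleaving is \emph{exactly} a functor $\mathbb{E}\to\mathbf{C}$ requires checking that the non-minimal cross relations are forced and functorial (naturality of $\phi^F,\phi^G$ handles mixed composites, and the triangle identities handle cross-cross composites); this is routine and standard, but it is a genuine verification that your writeup delegates to a parenthetical.
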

\begin{proof}
    The proof follows the construction of~\cite{chazal_proximity_2009}, but in the more general setting of categories with finite colimits. We start by  defining $\e_1 = s$ and $\e_2 = \e-s$. Then $H_s = FT_{-\e_1}\amalg GT_{-\e_2}$. We have a natural transformation
    \[
      F \xrightarrow{\iota^F} H_sT_{\e_1} = F \amalg GT_{\e_1-\e_2}
    \]
    given by the coproduct inclusion as well as a natural transformation
    \[
     H_s = FT_{-\e_1}\amalg GT_{-\e_2} \xrightarrow{\pi^F} FT_{\e_1}
    \]
    which is defined as $F\eta_{2\e_1}T_{-\e_1}$ on the first term of the coproduct and as $\phi^G T_{-\e_2}$ on the second term of the coproduct.

    For this to be an interleaving, we need to prove that $(\pi^F T_{\e_1})\iota^F = F\eta_{2\e_1}$ (going from $F$ to $H_sT_{\e_1}$ and then to $FT_{2e\_1}$ versus going from $F$ to $FT_{2\e_1}$ directly) and that $\iota^F T_{\e_1} \pi^F = H_s\eta_{2\e_1}$ (going from $H_s$ to $FT_{\e_1}$ and then to $H_sT_{2\e_1}$ versus going from $H_s$ to $H_sT_{2\e_1}$ directly).

    We have $(\pi^F T_{\e_1})\iota^F = F\eta_{2\e_1}$ as the left hand side is the composition:

\[
    F \to F \amalg GT_{\e_1-\e_2} \to FT_{2\e_1}
\]

\noindent where the first morphism is the coproduct inclusion and the second morphism is $F\eta_{2\e_1}$ on the first component of the coproduct.

As remarked by~\cite[Appendix A]{chazal_proximity_2009}, however, $l^F = \iota^F T_{\e_1} \pi^F$ and $d^F = H_s\eta_{2\e_1}$ are not equal in general. Similarly, the $d^G$ and $l^G$ morphisms defined symmetrically are also not equal in general. $\tilde{H}_s$ is defined by coequalizing both $d^FT_{-2\e_1}$ with $l^FT_{-2\e_1}$ and $d^GT_{-2\e_2}$ with $l^GT_{-2\e_2}$. This would of course satisfy all the desired interleaving properties between $F$, $G$ and $H_s$ but we need to show that the existing natural transformations $H_s \to FT_{\e_1}$ and $H_s \to GT_{\e_2}$ pass to the coequalizer (i.e. induce natural transformations $\tilde{H}_s \to FT_{\e_1}$ and $\tilde{H}_s \to GT_{\e_2}$). As everything is symmetric, we only need to prove it for the map $H_s \to FT_{\e_1}$.

We start by proving that the transformation $H_s \to FT_{\e_1}$ passes to the coequalizer of $d^FT_{-2\e_1}$ and $l^FT_{-2\e_1}$. We observe that

\[
    H_sT_{-2\e_1} \to FT_{-\e_1} \to H_s \to FT_{\e_1}
\]

\noindent is the same as the more direct map

\[
    H_sT_{-2\e_1} \to H_s \to FT_{\e_1}
\]

\noindent as both the blue parallelogram and the green rightmost triangle are commutative in the following diagram:

\[
\begin{tikzcd}[column sep=small]
    & F(x-\e_1) \arrow{rd} \arrow[rd, dash, green, shift right=-.75ex]
                \arrow{rr}\arrow[rr, dash, blue, shift right=-.75ex]
                \arrow[rr, dash, green, shift right=.75ex]
    &
    &
    F(x+\e_1)
    \\
    H_s(x-2\e_1) \arrow{ru}\arrow[ru, dash, blue, shift right=-.75ex]
                 \arrow{rr}\arrow[rr, dash, blue, shift right=.75ex]
    &
    &
    H_s(x) \arrow{ru}
           \arrow[ru, dash, blue, shift right=.75ex]
           \arrow[ru, dash, green, shift right=-.75ex]
\end{tikzcd}
\]

\noindent therefore

\begin{align*}
    H_sT_{-2\e_1} \to FT_{-\e_1} \to H_s \to FT_{\e_1} & = H_sT_{-2\e_1} \to FT_{-\e_1} \to FT_{\e_1}\\
                                                       & = H_sT_{-2\e_1} \to H_s \to FT_{\e_1}
\end{align*}

\noindent Proving that the transformation $H_s \to FT_{\e_1}$ passes to the coequalizer of $d^GT_{-2\e_2}$ and $l^GT_{-2\e_2}$ is slightly trickier. We need to prove that:

\[
    H_sT_{-2\e_2} \to GT_{-\e_2} \to H_s \to FT_{\e_1} = H_sT_{-2\e_2} \to H_s \to FT_{\e_1}
\]

\noindent As $H_sT_{-2\e_2} = FT_{-\e_1-2\e_2}\amalg GT_{-3\e_2}$ we can prove the above equality on the two components separately. We consider the diagram:
\[
\begin{tikzcd}[column sep=small]
    F(x-\e_1-2\e_2) \arrow{rd} \arrow[rd, dash, red, shift right=.75ex]
                               \arrow[rd, dash, teal, shift right=-.75ex]
                    \arrow{rrrr} \arrow[rrrr, dash, red, shift right=-.75ex]
                                 \arrow[rrrr, dash, teal, shift right=.75ex]
    & & & &
    F(x+\e_1) \\
    &
    H_s(x-2\e_2) \arrow{rd} \arrow[rd, dash, green, shift right=.75ex]
                            \arrow[rd, dash, teal, shift right=-.75ex]
                 \arrow{rr} \arrow[rr, dash, blue, shift right=-.75ex]
                            \arrow[rr, dash, red, shift right=.75ex]
    & &
    H_s(x) \arrow{ru} \arrow[ru, dash, red, shift right=.75ex]
                      \arrow[ru, dash, teal, shift right=-.75ex]
    \\
    G(x -3\e_2) \arrow{rr}\arrow[rr, dash, blue, shift right=.75ex]
                          \arrow[rr, dash, green, shift right=-.75ex]
                \arrow{ru}\arrow[ru, dash, blue, shift right=-.75ex]
                          \arrow[ru, dash, green, shift right=.75ex]
    & &
    G(x-\e_2) \arrow{ru}\arrow[ru, dash, blue, shift right=.75ex]
                        \arrow[ru, dash, teal, shift right=-.75ex]
\end{tikzcd}
\]
As the blue bottom parallelogram and the green bottom-left triangle are commutative, we have:
\[
    \begin{aligned}
    GT_{-3\e_2} \to H_sT_{-2\e_2} \to GT_{-\e_2} \to H_s \to FT_{\e_1} & = GT_{-3\e_2} \to GT_{-\e_2} \to H_s \to FT_{\e_1} \\
                                                                   & = GT_{-3\e_2} \to H_sT_{-2\e_2} \to H_s \to FT_{\e_1}
\end{aligned}
\]
As a consequence of the interleaving between $F$ and $G$, the large inverted teal triangle is also commutative and so is the top red trapezoid. Consequently, we have:
\[
    \begin{aligned}
    FT_{-\e_1-2\e_2} \to H_sT_{-2\e_2} \to GT_{-\e_2} \to H_s \to FT_{\e_1} & = FT_{-\e_1-2\e_2} \to GT_{-\e_2} \to FT_{\e_1} \\
                                                                        & = FT_{-\e_1-2\e_2} \to FT_{\e_1} \\
                                                                        & = FT_{-\e_1-2\e_2} \to H_sT_{-2\e_2} \to H_s \to FT_{\e_1}
\end{aligned}
\]
so, necessarily
\[
    H_sT_{-2\e_2} \to GT_{-\e_2} \to H_s \to FT_{\e_1} = H_sT_{-2\e_2} \to H_s \to FT_{\e_1}
\]
Proving that morphisms of the type $H_s \to H_{s^\prime}T_{|s-s^\prime|}$ also pass to the coequalizer is a similar exercise in diagram chasing.
\end{proof}

The following result is a generalization, in our setting, of~\cite[Thm. 4.4]{chazal_proximity_2009}. Given~\cref{lm:boxlemma,lm:interpolate}, which are the equivalent of~\cite[Lm. 4.5, 4.6, 4.7]{chazal_proximity_2009}, the proof of the following result is identical to the proof of~\cite[Thm. 4.4]{chazal_proximity_2009}: we reproduce it here with slight changes to adjust for differences in notation.

\begin{theorem}
    \label{thm:smallbottleneck}
    Let $\mathbf R$ be a category with finite colimits and $p$ be a categorical persistence function. If $F1, F2$ are two tame $\Rl$-indexed diagrams in $\mathbf R$, then $$d_{\mathbf R}(F1, F2) \ge d({\cal D}F1, {\cal D}F2).$$
\end{theorem}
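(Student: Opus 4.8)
The plan is to follow the strategy of~\cite{chazal_proximity_2009}: reduce the statement to a continuity estimate along an interpolating family and then assemble local matchings into a single global bijection. First I would observe that, since the interleaving distance is defined as an infimum, it suffices to fix an $\epsilon$ for which $F_1$ and $F_2$ are $\epsilon$-interleaved and prove $d(\mathcal{D}F_1, \mathcal{D}F_2) \le \epsilon$; taking the infimum over all such $\epsilon$ then yields $d(\mathcal{D}F_1,\mathcal{D}F_2)\le d_{\mathbf R}(F_1,F_2)$, which is the claim.

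With $\epsilon$ fixed, I would invoke the interpolation lemma (\cref{lm:interpolate}) to produce a family $\tilde H_s$, $s \in [0, \epsilon]$, with $\tilde H_0$ isomorphic to $F_1$ and $\tilde H_\epsilon$ isomorphic to $F_2$ (a $0$-interleaving being a pair of mutually inverse natural transformations, so that $\mathcal{D}\tilde H_0 = \mathcal{D}F_1$ and $\mathcal{D}\tilde H_\epsilon = \mathcal{D}F_2$), and such that $\tilde H_s$ and $\tilde H_{s'}$ are $|s-s'|$-interleaved for all $s,s'$. This converts the global comparison of $F_1$ and $F_2$ into the problem of controlling how the persistence diagram varies along a one-parameter family of pairwise-interleaved diagrams.

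The core of the argument is a local-to-global step resting on the box lemma (\cref{lm:boxlemma}) and the triangle inequality for the bottleneck distance. For parameters $s,s'$ close together, \cref{lm:boxlemma} bounds the total multiplicity of $\mathcal{D}\tilde H_s$ inside any box $\square$ by that of $\mathcal{D}\tilde H_{s'}$ inside the fattened box $\square_{|s-s'|}$, and symmetrically. Since each diagram has only finitely many off-diagonal points, these two-sided box inequalities guarantee (via a Hall-type marriage argument, the box counts verifying Hall's condition for every subcollection) a bijection matching $\mathcal{D}\tilde H_s$ with $\mathcal{D}\tilde H_{s'}$ within $\|\cdot\|_\infty$-distance $|s-s'|$, so that $d(\mathcal{D}\tilde H_s, \mathcal{D}\tilde H_{s'}) \le |s-s'|$ once $|s-s'|$ is small enough. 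I would then pick a partition $0 = s_0 < s_1 < \dots < s_N = \epsilon$ fine enough that this local estimate applies on each subinterval, obtaining bijections $\beta_i \colon \mathcal{D}\tilde H_{s_i} \to \mathcal{D}\tilde H_{s_{i+1}}$ of cost at most $s_{i+1}-s_i$. Composing them produces a bijection $\mathcal{D}F_1 \to \mathcal{D}F_2$ whose cost, by the triangle inequality for $\|\cdot\|_\infty$ applied along the orbit of a point, is at most $\sum_i (s_{i+1}-s_i) = \epsilon$, as required.

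I expect the main obstacle to be precisely the local matching step, and it is worth stressing why interpolation cannot be bypassed: the box-counting inequalities of \cref{lm:boxlemma} are \emph{not} sufficient on their own to produce a cost-$\epsilon$ bijection between $\mathcal{D}F_1$ and $\mathcal{D}F_2$, since they encode only a Hausdorff-type comparison rather than a global matching. What rescues the argument is that for \emph{small} fattenings the box inequalities do force a genuine cost-bounded bijection (no off-diagonal point can appear or disappear far from the diagonal), and the continuous family lets one compose such small steps. Turning the numerical box inequalities into an actual bijection will require the compactness afforded by finiteness of the off-diagonal diagrams, together with the cornerpoint-counting formula of \cref{prop:countmultiplicities} evaluated at right-regular corners. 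A secondary subtlety, to be handled as in~\cite{chazal_proximity_2009}, is verifying that the interpolants $\tilde H_s$ are tame (or at least have locally finite off-diagonal diagrams) so that \cref{lm:boxlemma,prop:countmultiplicities} legitimately apply to consecutive members of the family and the composed bijection is well defined.
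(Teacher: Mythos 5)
Your proposal is correct and follows essentially the same route as the paper's proof: fix an $\epsilon$-interleaving, interpolate via \cref{lm:interpolate}, use \cref{lm:boxlemma} to match the diagrams of consecutive interpolants along a sufficiently fine partition of $[0,\epsilon]$, and conclude by composing the local bijections with the triangle inequality. The only cosmetic difference is that you justify the local matching step by a Hall-type argument where the paper invokes the Easy Bijection Lemma of~\cite{cohen-steiner_stability_2007}, and the tameness-of-interpolants subtlety you rightly flag is left implicit in the paper's proof as well.
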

\begin{proof}
    The proof is analogous to~\cite[Thm. 4.4]{chazal_proximity_2009}. Let us assume that $F, G$ are $\epsilon$-interleaved. We can construct $\tilde{H}_s$ as in~\cref{lm:interpolate}. We define:
    \[
        \delta(s) = \frac{1}{2}\min\left\{ || p-q ||_\infty,\; p \in {\cal D}\tilde{H}_s \setminus \Delta,\; q \in {\cal D}\tilde{H}_s \setminus \{p\} \right\}
    \]
    We say that $\tilde{H}_{s^\prime}$ is very close to $\tilde{H}_{s}$ if $|s-s^\prime|<\delta(s)$. In such case, by~\cref{lm:boxlemma}, as $\tilde H_s$ and $\tilde H_{s^\prime}$ are $|s-s^\prime|$ interleaved, any off-diagonal point of ${\cal D}\tilde H_s$ admits exactly one point of ${\cal D}\tilde H_{s^\prime}$ within $l^\infty$ distance $|s-s^\prime|$. By compactness, we can find a sequence $0 = s_0 < s_1 < \dots < s_n < s_{n+1} = \epsilon$ such that for $i = 0, \dots, n$ either $\tilde{H}_{s_i}$ is very close to $\tilde{H}_{s_{i+1}}$ or vice versa. From the Easy Bijection Lemma~\cite{cohen-steiner_stability_2007}, it follows that $d({\cal D}\tilde H_{s_i}, {\cal D}\tilde H_{s_{i+1}}) \le s_{i+1}-s_i$. By applying repeatedly the triangle inequality we obtain that $d({\cal D}F, {\cal D}G) \le \epsilon$.
\end{proof}

 Even though the interleaving distance is, under mild assumptions, larger than the bottleneck distance, the opposite is not true with such generality. In the rest of this section we will show a class of categories and rank functions for which the converse holds.

\begin{definition}
    \label{def:tightrank}
    A ranked category $(\Rcat, r)$ is tight if, for any tame $\Rl$-indexed diagrams $F$ and $G$, the following equality holds:
\[
    d_{\mathbf R}(F, G) = d({\cal D}F, {\cal D}G)
\]
\end{definition}

\begin{theorem}
    \label{thm:onesimple}
    Let $\mathbf D$ be a semisimple Abelian category with only one simple object up to isomorphism, equipped with the rank $length$. Then the interleaving and bottleneck distances coincide on tame $\Rl$-indexed diagrams, that is to say $(\mathbf D, length)$ is tight.
\end{theorem}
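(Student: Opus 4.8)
The inequality $d_{\mathbf D}(F,G) \ge d(\mathcal D F, \mathcal D G)$ is immediate from \cref{thm:smallbottleneck}, since a semisimple Abelian category has all finite colimits. The content of the theorem is therefore the reverse inequality $d_{\mathbf D}(F,G) \le d(\mathcal D F, \mathcal D G)$, and the plan is to construct, from any bijection realizing nearly the bottleneck cost, an interleaving of the same cost. The engine is the interval decomposition: because $\mathbf D$ has a single simple object $S$ up to isomorphism, \cref{thm:finitetype} (resting on \cref{prop:indecomposablequiver}) lets me write every tame diagram as a finite direct sum of interval modules with that same simple, $F \cong \bigoplus_k \chi_{I_k, S}$ and $G \cong \bigoplus_\ell \chi_{J_\ell, S}$; by \cref{prop:countmultiplicities} the cornerpoints of $\mathcal D F$ (resp.\ $\mathcal D G$), counted with multiplicity, are exactly the endpoint pairs of the intervals $I_k$ (resp.\ $J_\ell$). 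This gives a dictionary between off-diagonal points of the diagram and interval summands.

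First I would prove a single-interval interleaving lemma: two interval modules $\chi_{[a,b],S}$ and $\chi_{[c,d],S}$ are $\delta$-interleaved whenever $\max(|a-c|,|b-d|)\le\delta$, and a single interval module $\chi_{[a,b],S}$ is $\delta$-interleaved with the zero diagram whenever $b-a\le 2\delta$. Both interleavings are built explicitly from $\mathrm{Id}_S$ and the zero morphism: the component maps are the identity on $S$ wherever source and shifted target both take the value $S$, and zero otherwise, and the two interleaving identities reduce to the statement that the shift maps $\chi\eta_{2\delta}$ are identities or zero precisely on the overlaps, which the endpoint inequalities guarantee. In diagram terms $\max(|a-c|,|b-d|)\le\delta$ says the cornerpoints $(a,b)$ and $(c,d)$ are within $l^\infty$-distance $\delta$, and $b-a\le 2\delta$ says $(a,b)$ is within $l^\infty$-distance $\delta$ of the diagonal. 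It is here that the hypothesis of a unique simple object is essential: matching $\chi_{I,S}$ to $\chi_{J,S'}$ with $S\not\simeq S'$ would force the interleaving maps to factor through $0$, since there is no nonzero morphism $S\to S'$, so non-short intervals of distinct colors could never be matched; with one simple object this obstruction disappears, which is exactly why the unrestricted bottleneck distance suffices.

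Next I would record the direct-sum principle: if $F=\bigoplus_k F_k$ and $G=\bigoplus_k G_k$ with each pair $(F_k,G_k)$ being $\delta$-interleaved via $(\phi^{F_k},\phi^{G_k})$, then $\bigoplus_k\phi^{F_k}$ and $\bigoplus_k\phi^{G_k}$ form a $\delta$-interleaving of $F$ and $G$, because $T_\delta$ commutes with finite coproducts and the interleaving identities are verified componentwise. With these two facts the argument closes. Given any $\delta > d(\mathcal D F, \mathcal D G)$, choose a bijection $\beta$ between $\mathcal D F$ and $\mathcal D G$ of cost $<\delta$. Through the dictionary above, $\beta$ pairs each interval $I_k$ either with an interval $J_\ell$ whose cornerpoint is $l^\infty$-within $\delta$, or with a diagonal point, in which case $I_k$ has length $<2\delta$; symmetrically for the unmatched $J_\ell$. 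Decomposing $F$ and $G$ along this pairing and applying the single-interval lemma to every matched pair, and to every interval matched to the diagonal against $0$, yields through the direct-sum principle a $\delta$-interleaving of $F$ and $G$. Hence $d_{\mathbf D}(F,G)\le\delta$ for all $\delta > d(\mathcal D F,\mathcal D G)$, so $d_{\mathbf D}(F,G)\le d(\mathcal D F, \mathcal D G)$, and combined with the easy direction this gives equality.

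The step I expect to be the main obstacle is the single-interval interleaving lemma, specifically getting the boundary conventions (half-open versus closed intervals, and cornerpoints at infinity) to line up so that $\max(|a-c|,|b-d|)\le\delta$ and $b-a\le2\delta$ match the $l^\infty$-distances of \cref{def:bottleneck} exactly, and checking that the identity-where-possible component maps genuinely are natural transformations of $\Rl$-indexed diagrams; taking $\delta$ strictly above the bottleneck distance provides enough slack that the strictness at boundaries is harmless. The direct-sum principle and final assembly are then routine. A more economical alternative is to invoke Schur's lemma to identify $\mathbf D$ with $\mathbf{FinVec}_{\mathbb K}$ for the division ring $\mathbb K=\mathrm{End}(S)$, under which $length$ becomes $\dim$, and to quote the classical isometry theorem; but carrying out the interval-matching argument directly stays inside the paper's framework and prefigures the multicolored refinement of the next section.
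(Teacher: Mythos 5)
Your proposal is correct and follows essentially the same route as the paper: the easy direction via \cref{thm:smallbottleneck}, then the interval decomposition $F \simeq \bigoplus_k \chi_{I_k,S}$, $G \simeq \bigoplus_\ell \chi_{J_\ell,S}$ from \cref{thm:finitetype}, and an interleaving built by sending each summand $\chi_{I_k,S}$ to the summand indexed by its image under a near-optimal bijection of diagrams. The paper's own proof is in fact terser than yours --- it states the map $\phi^F\colon \chi_{I_k,S} \mapsto \chi_{I_{\psi(k)},S}T_\epsilon$ without spelling out the single-interval lemma, the direct-sum principle, or the handling of diagonal matches --- so your write-up supplies exactly the details the paper leaves implicit, including the correct observation of where the unique-simple-object hypothesis (via Schur's lemma) is used.
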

\begin{proof}
    Given $F, G$ two $\Rl$-indexed diagrams, we know already $d_{\mathbf D}(F, G) \le d({\cal D}F, {\cal D}G)$ because of~\cref{thm:smallbottleneck}. To prove the inequality, let us call $S$ the only (up to isomorphism) simple object in $\mathbf D$. As $F$ and $G$ are tame, by~\cref{thm:finitetype}, they are also of finite type and we can therefore write:
\begin{align*}
    F \simeq \bigoplus_{k \in {\cal D}F} \chi_{I_k, S}&&
    \mbox{ and }&&
    G \simeq \bigoplus_{k \in {\cal D}G} \chi_{I_k, S}
\end{align*}
where $S$ is a representative of the unique isomorphism class of simple objects in $\mathbf D$. We take $I_k$ to be the empty interval if $k$ lies on the diagonal of the persistence diagram.

Given $\epsilon > d({\cal D}F, {\cal D}G)$, let us take a bijection of persistence diagrams $\psi:{\cal D}F \to {\cal D}G$ which sends each point to a point of distance $<\epsilon$. The interleaving map $\phi^F: F \to GT_\epsilon$ will send $\chi_{I_k, S}$ into $\chi_{I_{\psi(k), S}} T_{\epsilon}$.
\end{proof}

\cref{thm:onesimple} is more general than the usual result (which considers $\mathbf D = \mathbf{FinVec}_\mathbb{K}$), as it includes modules over non-commutative division rings, which are a semisimple category with essentially one simple object. This will be important in the follow up to make general theorems about multicolored persistence in semisimple categories.

Having, up to isomorphism, only one simple object is a necessary assumption. As a counter-example, given a semisimple Abelian category $\mathbf D$ with at least two non-isomorphic simple objects $O_1$ and $O_2$, $(\mathbf D, length)$ as in~\cref{prop:length} is not tight. If we take two constant $\Rl$-indexed diagrams: $u \mapsto O_1$ and $u \mapsto O_2$, their interleaving distance is $\infty$ and their bottleneck distance is $0$. To recover the equality, we will use the concept of coloring.

\section{Multicolored persistence}
\label{sec:multicoloredpersistence}

The aim of this section is to find a suitable way to still use persistence diagrams to compute the interleaving distance (or find tighter bounds for it) even in categories with many non-isomorphic indecomposable objects. We will do so by defining \textit{multicolored persistence diagrams}, where each color encodes the isomorphism class of an indecomposable persistence module.

We start by introducing the concept of coloring of ranked categories.
\begin{definition}
    \label{def:colorablecategory}
    Given an index set $\Gamma$, we say that a ranked category $(\mathbf R, r)$ is $\Gamma$-colorable if there exist ranked categories $(\mathbf Q_\gamma, r_\gamma)$ for $\gamma \in \Gamma$ and image-preserving functors ${\cal C}_\gamma: \mathbf R \to \mathbf Q_\gamma$ such that
    \begin{enumerate}
        \item the induced functor ${\cal C} : \mathbf R \to \prod_{\gamma \in \Gamma}\mathbf Q_\gamma$ is fully faithful;
        \item for each $X \in \Obj(\Rcat)$, $r_\gamma(X)$ is $0$ for all but finitely many $\gamma \in \Gamma$ and:
            \begin{equation}
                \label{eq:multicoloredrank}
                r(X) = \sum_{\gamma \in \Gamma} r_\gamma({\cal C}_\gamma(X))
            \end{equation}
    \end{enumerate}
    We call such ${\cal C}$ a $\Gamma$-coloring and we say that $(\Rcat,r)$ is ${\cal C}$-colored.
\end{definition}

The fully faithful condition may sound quite abstract, in practice what we are asking is that given $X,Y \in \textnormal{Obj}(\Rcat)$ the natural map of sets:

\[
    \textnormal{Hom}(X, Y) \to \prod_{\gamma \in \Gamma} \textnormal{Hom}({\cal C}_\gamma(X), {\cal C}_\gamma(Y))
\]
is bijective.

Note that in an Abelian category $\mathbf C$ of finite length, equipped with the rank function $length$, we have a coloring given by the block decomposition into indecomposable categories $\mathbf C_\gamma$ (see~\cite[Sect.~1]{etingof_tensor_2015}). Consequently~\cref{eq:multicoloredrank} follows from the additivity of $length$.

\subsection{Multicolored persistence diagrams}
\label{sec:multicolored_persistence_diagrams}

In what follows, we will show how, given $(\mathbf R, r)$ a $\mathcal{C}$-colored ranked category, it is possible to construct a multicolored persistence diagram. First we will need a simple lemma:

\begin{lemma}
    Let $(\mathbf R, r)$ be a $\mathcal{C}$-colored ranked category. Given $F \in \mathbf R^\Rl$, if $F$ is constant on an interval $I$ with respect to $r$, then $F$ is also constant on $I$ with respect to all colored components $r_\gamma$. As a consequence, if $F$ is tame with respect to $r$, then $F$ is tame with respect to $r_\gamma$, for all $\gamma\in\Gamma$.
\end{lemma}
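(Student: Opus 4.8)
The plan is to unwind the two definitions of constancy and use the rank decomposition formula from the coloring to transport the information between the total rank $r$ and the colored components $r_\gamma$. Recall that, by~\cref{def:tame}, saying $F$ is constant on $I$ with respect to $r$ means that for all $a \le b \in I$ we have $p_F(a,a) = p_F(a,b) = p_F(b,b)$, where $p_F(a,b) = r(im(F(a \le b)))$. Analogously, constancy with respect to $r_\gamma$ should be read as the same chain of equalities for the persistence function $p^\gamma_F(a,b) = r_\gamma(im(\mathcal{C}_\gamma(F)(a \le b)))$ induced by the ranked category $(\mathbf Q_\gamma, r_\gamma)$. The first thing I would record is that, since each $\mathcal{C}_\gamma$ is image-preserving, $\mathcal{C}_\gamma(im(F(a \le b))) \simeq im(\mathcal{C}_\gamma(F)(a \le b))$, so by~\cref{eq:multicoloredrank} applied to $X = im(F(a \le b))$ we get the pointwise identity
\[
    p_F(a,b) = \sum_{\gamma \in \Gamma} p^\gamma_F(a,b),
\]
valid for every $(a,b) \in \Delta^+$ and with only finitely many nonzero summands.

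Next I would exploit monotonicity. For fixed $a \le b$ in $I$, the morphism factors as $im(F(a\le a)) = F(a) \twoheadrightarrow im(F(a \le b)) \hookrightarrow F(b)$, so each colored persistence function satisfies the sandwich $p^\gamma_F(b,b) \le p^\gamma_F(a,b) \le p^\gamma_F(a,a)$, which follows from conditions 1 and 2 of~\cref{def:rank} in $(\mathbf Q_\gamma, r_\gamma)$ exactly as in~\cref{prop:persistencefromrank}. Summing over $\gamma$ recovers the analogous sandwich for $p_F$ itself. The key observation is then that constancy of $F$ with respect to $r$ forces $p_F(a,a) = p_F(b,b)$, i.e.\ the sum of the nonnegative gaps $p^\gamma_F(a,a) - p^\gamma_F(b,b)$ vanishes; since each gap is individually nonnegative, every gap must be zero. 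Thus $p^\gamma_F(a,a) = p^\gamma_F(b,b)$ for every $\gamma$, and the sandwich collapses to $p^\gamma_F(a,a) = p^\gamma_F(a,b) = p^\gamma_F(b,b)$. This is precisely constancy of $F$ on $I$ with respect to $r_\gamma$.

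The main obstacle, such as it is, is making sure the sign conventions line up: the persistence function built from a rank is \emph{decreasing} under enlarging the source and the target object (as in~\cref{prop:persistencefromrank}, $p_F(a,b) \ge p_F(a',b)$ for $a \le a'$), so the right inequalities to cite are the two directions of monotonicity giving the sandwich above, and one must be careful that the nonnegativity of each summand $p^\gamma_F(a,a) - p^\gamma_F(b,b)$ is what licenses passing from a vanishing sum to vanishing terms. No analytic estimate is needed; the argument is entirely a finite, termwise bookkeeping over colors. Finally, for the tameness consequence I would argue contrapositively: if $a$ is a regular value for $F$ with respect to $r$, there is a connected neighborhood $I \ni a$ on which $F$ is $r$-constant, hence by the above $F$ is $r_\gamma$-constant on the same $I$, so $a$ is regular for $r_\gamma$ as well. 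Therefore every critical value of $r_\gamma$ is a critical value of $r$, and the set of critical values for $r_\gamma$ is contained in the finite set of critical values for $r$; in particular it is finite, so $F$ is tame with respect to each $r_\gamma$.
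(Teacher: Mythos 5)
Your overall strategy is the right one and, in structure, it matches the paper's own proof: decompose $p_F$ as a finite sum of colored persistence functions via \cref{eq:multicoloredrank} and the image-preservation of the $\mathcal{C}_\gamma$, then argue that a vanishing sum of nonnegative gaps forces each gap to vanish. Your first paragraph is correct. However, there is a genuine sign error in the monotonicity step, and the argument as written does not go through. From the factorization $F(a) \twoheadrightarrow im(F(a\le b)) \hookrightarrow F(b)$, condition 1 of \cref{def:rank} applied to the monomorphism gives $r_\gamma(im(\mathcal{C}_\gamma F(a \le b))) \le r_\gamma(\mathcal{C}_\gamma F(b))$, i.e. $p^\gamma_F(a,b) \le p^\gamma_F(b,b)$ --- the \emph{opposite} of the left half of your ``sandwich'' $p^\gamma_F(b,b) \le p^\gamma_F(a,b)$. (The same reversal appears in your parenthetical recollection of \cref{prop:persistencefromrank}: by condition 1 of \cref{def:persistence} one has $p_F(a,b) \le p_F(a',b)$ for $a \le a' \le b$, not $\ge$.) The correct picture is that the middle term is a lower bound for \emph{both} endpoints, $p^\gamma_F(a,b) \le \min\left(p^\gamma_F(a,a),\, p^\gamma_F(b,b)\right)$; ranks along a filtration are not monotone, as shown by a class born at time $0$ (in $\mathbf{FinVec}_{\mathbb K}$, take $F(x)=0$ for $x<0$ and $F(x)=\mathbb{K}$ for $x\ge 0$: then $p_F(b,b)=1 > 0 = p_F(a,b)$ for $a<0\le b$). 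Consequently your gaps $p^\gamma_F(a,a) - p^\gamma_F(b,b)$ are not a priori nonnegative, and the key step ``a vanishing sum of nonnegative terms has all terms zero'' is unjustified for that family of gaps; their vanishing is the conclusion, not something available en route.

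The repair is short and turns your argument into precisely the paper's proof: run the vanishing-sum argument twice, anchoring the gaps at the middle term instead of comparing the two endpoints to each other. Constancy with respect to $r$ gives $p_F(a,a) = p_F(a,b)$, and each gap $p^\gamma_F(a,a) - p^\gamma_F(a,b)$ is nonnegative by condition 2 of \cref{def:rank} (the regular epimorphism onto the image, preserved by $\mathcal{C}_\gamma$), so all of these gaps vanish; separately, $p_F(b,b) = p_F(a,b)$ and each gap $p^\gamma_F(b,b) - p^\gamma_F(a,b)$ is nonnegative by condition 1 (the monomorphism from the image), so these vanish as well. Together they yield $p^\gamma_F(a,a) = p^\gamma_F(a,b) = p^\gamma_F(b,b)$ for every $\gamma$, which is constancy with respect to each $r_\gamma$. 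Your final deduction of tameness --- every $r$-regular value is $r_\gamma$-regular, so the critical values for $r_\gamma$ form a subset of the finitely many critical values for $r$ --- is correct as stated and matches the paper's (unwritten) conclusion.
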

\begin{proof}
    Let $I\subseteq \R$ be an interval. Given $u\le v \in I$ we know that $r(F(u)) = r(im(F(u\le v)))$, i.e.:
    \[
        \sum_{\gamma \in \Gamma} r_\gamma(F(u)) = \sum_{\gamma \in \Gamma} r_\gamma(im(F(u \le v)))
    \]
    Of course, for any $\gamma$, we have $r_\gamma(F(u)) \ge r_\gamma(im(F(u \le v)))$. If for some $\overline\gamma$ we had a strict inequality $r_{\overline\gamma}(F(u)) > r_{\overline\gamma}(im(F(u \le v)))$, then we would have a strict inequality:

    \[
        \sum_{\gamma \in \Gamma} r_\gamma(F(u)) > \sum_{\gamma \in \Gamma} r_\gamma(im(F(u \le v)))
    \]
    which is absurd. $r_\gamma(im(F(u \le v))) = r_\gamma(F(v))$ is proved in the same way.
\end{proof}

As a consequence, given a $\cal C$-colored ranked category $(\Rcat, r)$ and a $\Rl$-indexed diagram $F$, we can draw its multicolored persistence diagram by superimposing the persistence diagrams associated to each colored component, see~\cref{fig:colored_graph_b}. Let $(\Rcat, r)$ be a $\cal C$-colored ranked category. The \textit{multicolored bottleneck distance} between two multicolored persistence diagrams is computed just like the normal bottleneck distance, but only accepting bijections that preserve the color of cornerpoints. We denote it by $d_{\cal C}$.

\begin{definition}\label{def:tight_coloring}
    Let $\cal C$ be a coloring on a ranked category $(\Rcat, r)$. We say that $\cal C$ is tight if for any tame $\Rl$-indexed diagrams $F, G$ the following equality holds:
\[
    d_{\mathbf R}(F, G) = d_{\cal C}({\cal D}F, {\cal D}G)
\]
\end{definition}

The multicolored bottleneck distance is greater or equal than the normal bottleneck distance, as the minimum is calculated across a smaller set of possible bijections. First, it is natural to ask whether the multicolored bottleneck distance is still bounded by the interleaving distance.

\begin{figure}[tb]
\centering
\subfigure[][]{\includegraphics[width=.4\textwidth]{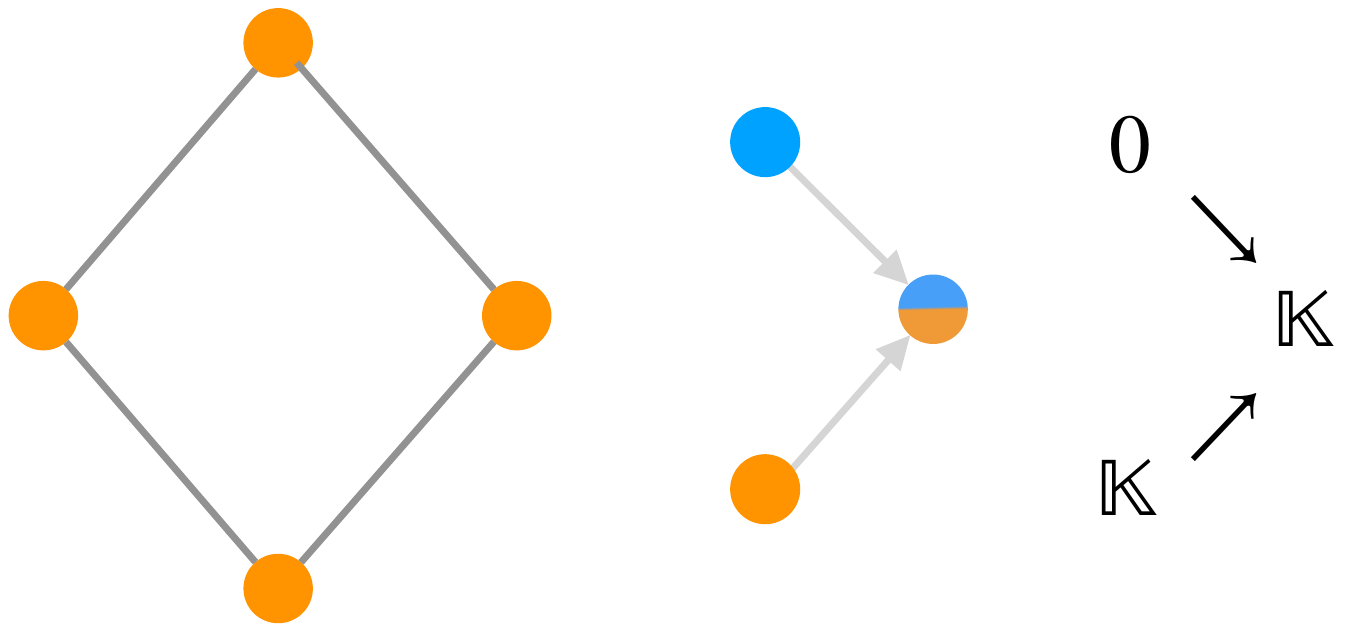}\label{fig:col_toy_a}}
\qquad
\subfigure[][]{\includegraphics[width=.4\textwidth]{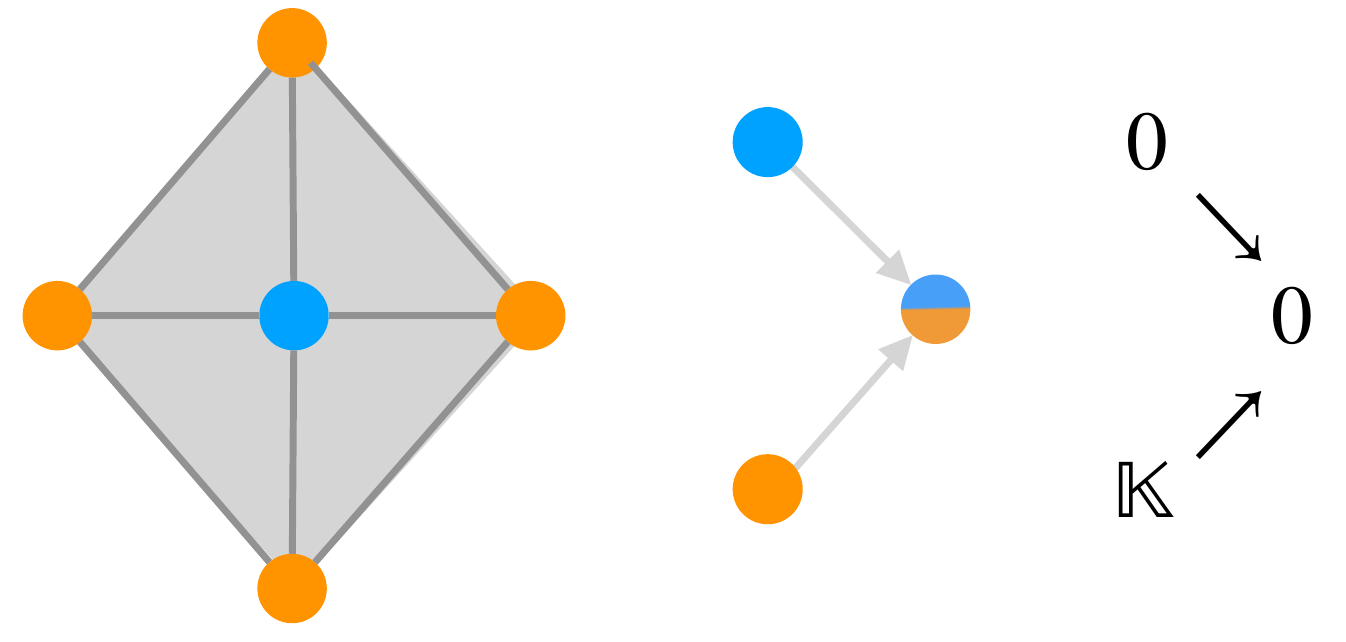}\label{fig:col_toy_b}}
\caption{\textbf{$H_1$ and multicolored simplicial complexes.} Coloring the vertices of a simplicial complex allows one to create a compact poset representation of all possible interaction between colored components evaluated through a rank function. Here, we consider a two-class coloring, namely orange $o$ and blue $b$. In each panel, the leftmost object is a multicolored simplicial complex, in the center the poset obtained by considering colors and their interactions: $(\left\{\{b\}, \{o\}, \{b, o\}\right\},\subseteq)$. Finally, the diagram obtained by computing the first homology group for each element of $\left\{\{b\}, \{o\}, \{b, o\}\right\}$. Observe how the cycle generated by the orange component propagates to $\{b, o\}$ in Panel (a), whilst it disappears from $\{b, o\}$ in Panel (b), because of the central blue vertex. }\label{fig:col_toy}
\end{figure}

\begin{theorem}
    \label{thm:tight}
    Let ${\cal C}: \mathbf R \to \prod_{\gamma\in\Gamma} \mathbf Q_\gamma$ be a $\Gamma$-coloring of a ranked category $(\Rcat, r)$ with components $(\mathbf Q_\gamma, r_\gamma)$. If, for all $\gamma \in \Gamma$, the category $\mathbf Q_\gamma$ admits finite colimits, then the multicolored bottleneck distance is bounded by the interleaving distance, i.e.
    \[
        d_{\mathbf R}(F, G) \ge d_{\cal C}({\cal D}F, {\cal D}G)
    \]

    \noindent Furthermore, if all $(\mathbf Q_\gamma, r_\gamma)$ are tight, then ${\cal C}$ is a tight coloring in the sense of~\cref{def:tight_coloring}.
\end{theorem}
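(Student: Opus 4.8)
The plan is to reduce the statement to the corresponding facts in each component $\mathbf{Q}_\gamma$, exploiting that both the interleaving distance and the multicolored bottleneck distance decompose color by color. The lemma immediately preceding the theorem guarantees that each $\mathcal{C}_\gamma(F)$ and $\mathcal{C}_\gamma(G)$ is tame with respect to $r_\gamma$, so the per-color persistence diagrams $\mathcal{D}\mathcal{C}_\gamma(F)$ and $\mathcal{D}\mathcal{C}_\gamma(G)$ are well defined; by construction the multicolored diagram $\mathcal{D}F$ is their superimposition, the $\gamma$-colored points being exactly those of $\mathcal{D}\mathcal{C}_\gamma(F)$.

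The first key step is to establish the identity
\[
    d_{\mathbf{R}}(F, G) = \sup_{\gamma \in \Gamma} d_{\mathbf{Q}_\gamma}(\mathcal{C}_\gamma(F), \mathcal{C}_\gamma(G)).
\]
The inequality $\ge$ is immediate from functoriality: applying $\mathcal{C}_\gamma$ to an $\epsilon$-interleaving between $F$ and $G$ yields an $\epsilon$-interleaving between $\mathcal{C}_\gamma(F)$ and $\mathcal{C}_\gamma(G)$. The reverse inequality is where full faithfulness enters, and it is the main obstacle. Since $\mathcal{C}$ is fully faithful, the induced functor on diagram categories $\mathbf{R}^{\Rl} \to \prod_{\gamma} \mathbf{Q}_\gamma^{\Rl}$ is again fully faithful, so a natural transformation $F \to GT_\epsilon$ corresponds bijectively to a tuple of natural transformations $\mathcal{C}_\gamma(F) \to \mathcal{C}_\gamma(G)T_\epsilon$. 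Given any $\epsilon$ strictly larger than the right-hand supremum, each pair $\mathcal{C}_\gamma(F), \mathcal{C}_\gamma(G)$ is $\epsilon$-interleaved (using that the set of interleaving scales is upward closed), so I would pick component interleaving maps $\phi^F_\gamma, \phi^G_\gamma$ and assemble them, via full faithfulness, into $\phi^F, \phi^G$ in $\mathbf{R}^{\Rl}$. The interleaving identities $(\phi^G T_\epsilon)\phi^F = F\eta_{2\epsilon}$ and its symmetric counterpart hold in each component, and since $\mathcal{C}$ is faithful it reflects these equalities, so the assembled maps form an $\epsilon$-interleaving; letting $\epsilon$ decrease to the supremum proves $\le$.

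Next I would decompose the multicolored bottleneck distance. A color-preserving bijection of multicolored diagrams is exactly a tuple $(\beta_\gamma)_\gamma$ of bijections $\mathcal{D}\mathcal{C}_\gamma(F) \to \mathcal{D}\mathcal{C}_\gamma(G)$, whose cost is $\sup_\gamma \sup_p \|p - \beta_\gamma(p)\|_\infty$. As the factors may be optimized independently, the infimum over such tuples and the supremum over $\gamma$ interchange, yielding
\[
    d_{\mathcal{C}}(\mathcal{D}F, \mathcal{D}G) = \sup_{\gamma \in \Gamma} d(\mathcal{D}\mathcal{C}_\gamma(F), \mathcal{D}\mathcal{C}_\gamma(G)).
\]
Assuming each $\mathbf{Q}_\gamma$ has finite colimits, \cref{thm:smallbottleneck} gives $d(\mathcal{D}\mathcal{C}_\gamma(F), \mathcal{D}\mathcal{C}_\gamma(G)) \le d_{\mathbf{Q}_\gamma}(\mathcal{C}_\gamma(F), \mathcal{C}_\gamma(G))$ for each $\gamma$; taking suprema and combining with the first identity gives $d_{\mathcal{C}}(\mathcal{D}F, \mathcal{D}G) \le d_{\mathbf{R}}(F, G)$, the desired bound.

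Finally, for the ``furthermore'' clause, tightness of each $(\mathbf{Q}_\gamma, r_\gamma)$ in the sense of \cref{def:tightrank} upgrades the component inequality to the equality $d(\mathcal{D}\mathcal{C}_\gamma(F), \mathcal{D}\mathcal{C}_\gamma(G)) = d_{\mathbf{Q}_\gamma}(\mathcal{C}_\gamma(F), \mathcal{C}_\gamma(G))$. Taking suprema over $\gamma$ and using the two displayed decompositions, the two global distances coincide, so $\mathcal{C}$ is tight in the sense of \cref{def:tight_coloring}. I expect the only delicate points to be the assembly of componentwise interleavings into a single interleaving in $\mathbf{R}$ (relying essentially on full faithfulness and on the upward-closedness of $\epsilon$-interleaving) and the justification of the inf--sup interchange; the remainder is formal.
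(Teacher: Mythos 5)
Your proposal is correct and follows essentially the same route as the paper's proof: decompose both the interleaving distance and the multicolored bottleneck distance as suprema over colors via full faithfulness of $\mathcal{C}$, apply \cref{thm:smallbottleneck} componentwise for the inequality, and upgrade to equalities when each $(\mathbf{Q}_\gamma, r_\gamma)$ is tight. The only difference is one of detail: you spell out the assembly of componentwise interleavings and the inf--sup interchange, which the paper asserts without elaboration.
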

\begin{proof}
    The functor ${\cal C}$ is fully faithful by~\cref{def:colorablecategory}, hence $F$ and $G$ are $\e$-interleaved in $\mathbf R$ if and only if ${\cal C}_\gamma F$ and ${\cal C}_\gamma G$ are  $\epsilon$-interleaved in $\mathbf Q_\gamma$ for all $\gamma$. Therefore:
    \[
        d_{\mathbf R}(F, G) = \sup_{\gamma\in\Gamma} \; d_{\mathbf Q_\gamma}({\cal C}_\gamma F, {\cal C}_\gamma G)
    \]
    Similarly:
    \[
        d_{\cal C}({\cal D}F, {\cal D}G) = \sup_{\gamma\in\Gamma} \; d({\cal D(C}_\gamma F), {\cal D(C}_\gamma G))
    \]
    As all $\mathbf Q_\gamma$ have finite colimits, thanks to~\cref{thm:smallbottleneck}, we have element-wise inequalities
    \[
        d_{\mathbf Q_\gamma}({\cal C}_\gamma F, {\cal C}_\gamma G) \ge d({\cal D(C}_\gamma F), {\cal D(C}_\gamma G))
    \]
    so, necessarily
    \[
     d_{\mathbf R}(F, G) \ge d_{\cal C}({\cal D}F, {\cal D}G)
    \]
    Similarly, if all $(\mathbf Q_\gamma, r_\gamma)$ are tight, we have element-wise equalities:
    \[
        d_{\mathbf Q_\gamma}({\cal C}_\gamma F, {\cal C}_\gamma G) = d({\cal D(C}_\gamma F), {\cal D(C}_\gamma G))
    \]
    and thus
    \[
     d_{\mathbf R}(F, G) = d_{\cal C}({\cal D}F, {\cal D}G)
    \]
\end{proof}

Given an Abelian semisimple category $\Cat$, let $\Gamma$ be a maximal set of non-isomorphic simple objects of $\Cat$ and, for each $\gamma \in \Gamma$, $\Cat_\gamma$ the full subcategory spanned by objects isomorphic to $\bigoplus_{i=1}^n \gamma$ for $n\in\mathbb N$. As in the case of finite group representations~\cite{serre_linear_2014}, objects in $\Cat$ can be canonically decomposed as a direct sum of components in the various $\Cat_\gamma$. This decomposition induces a natural tight coloring ${\cal C}:\mathbf C \to \prod_{\gamma\in\Gamma} \mathbf{C}_\gamma $ on $(\Cat, length)$.
\begin{theorem}
    Given $\mathbf C$ an Abelian semisimple category, ${\cal C}$ is a tight coloring on $(\mathbf C, length)$.
\end{theorem}
\begin{proof}
    By~\cref{thm:tight} we only need to prove that, for all $\gamma \in \Gamma$, the ranked category $(\Cat_\gamma, length)$ is tight. However the category $\mathbf C_\gamma$ is semisimple and has only one simple object $\gamma$ up to isomorphism so, by~\cref{thm:onesimple} it is tight.
\end{proof}

We have two examples in mind: groups and posets.

\subsection{Persistent homology on simplicial complexes with a group action}
\label{sec:group}

\begin{figure}[tb]
\centering
\subfigure[][]{\includegraphics[width=.3\textwidth]{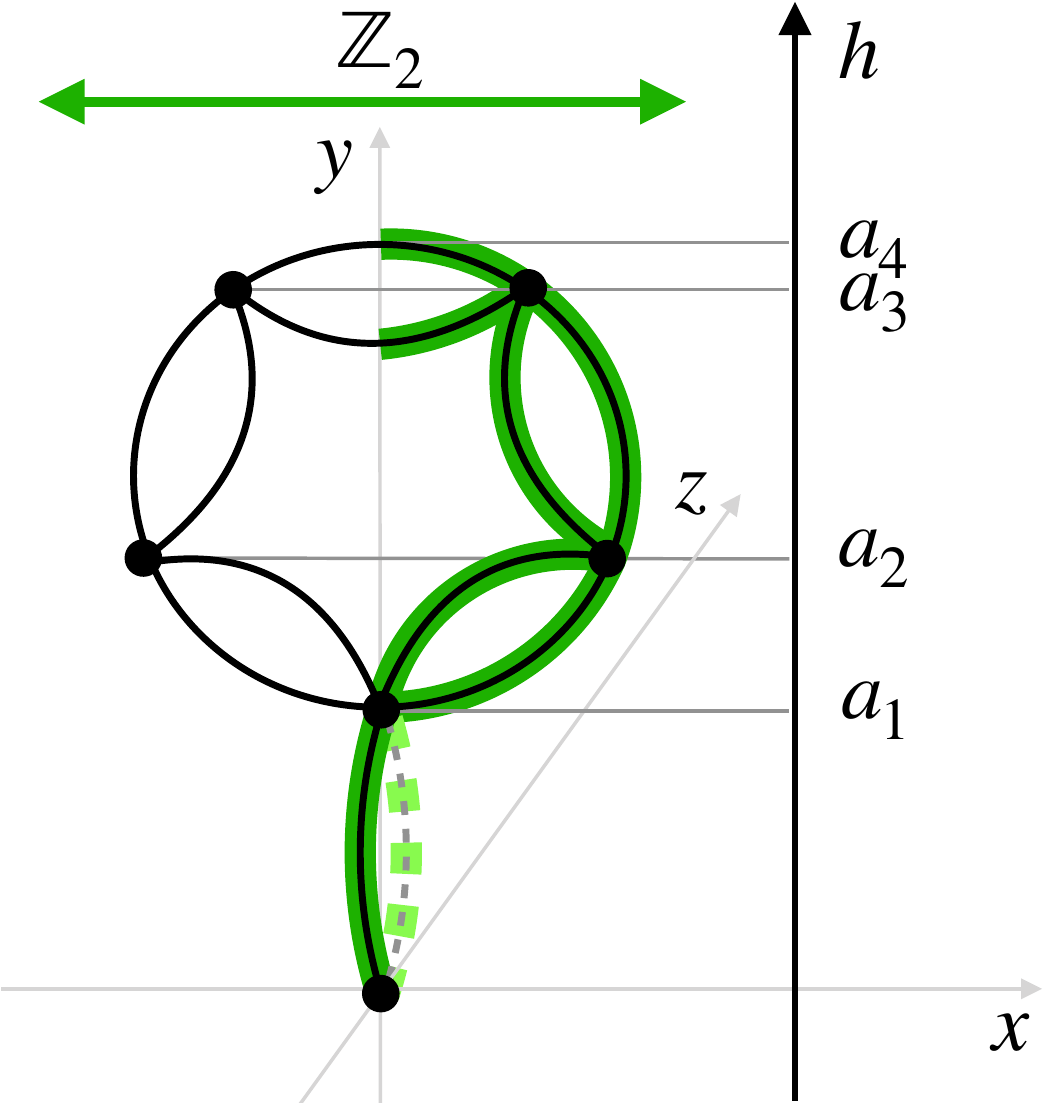}\label{fig:group_action_a}}
\qquad
\subfigure[][]{\includegraphics[width=.18\textwidth]{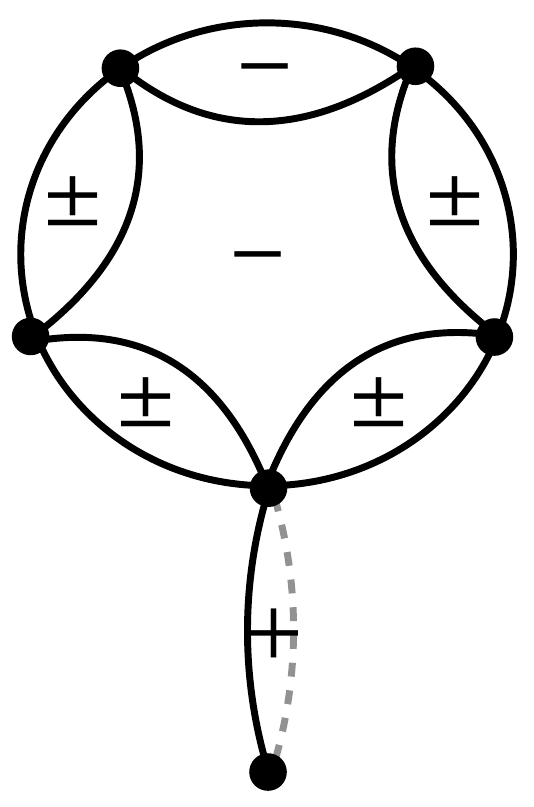}\label{fig:group_action_c}}
\qquad
\subfigure[][]{\includegraphics[width=.3\textwidth]{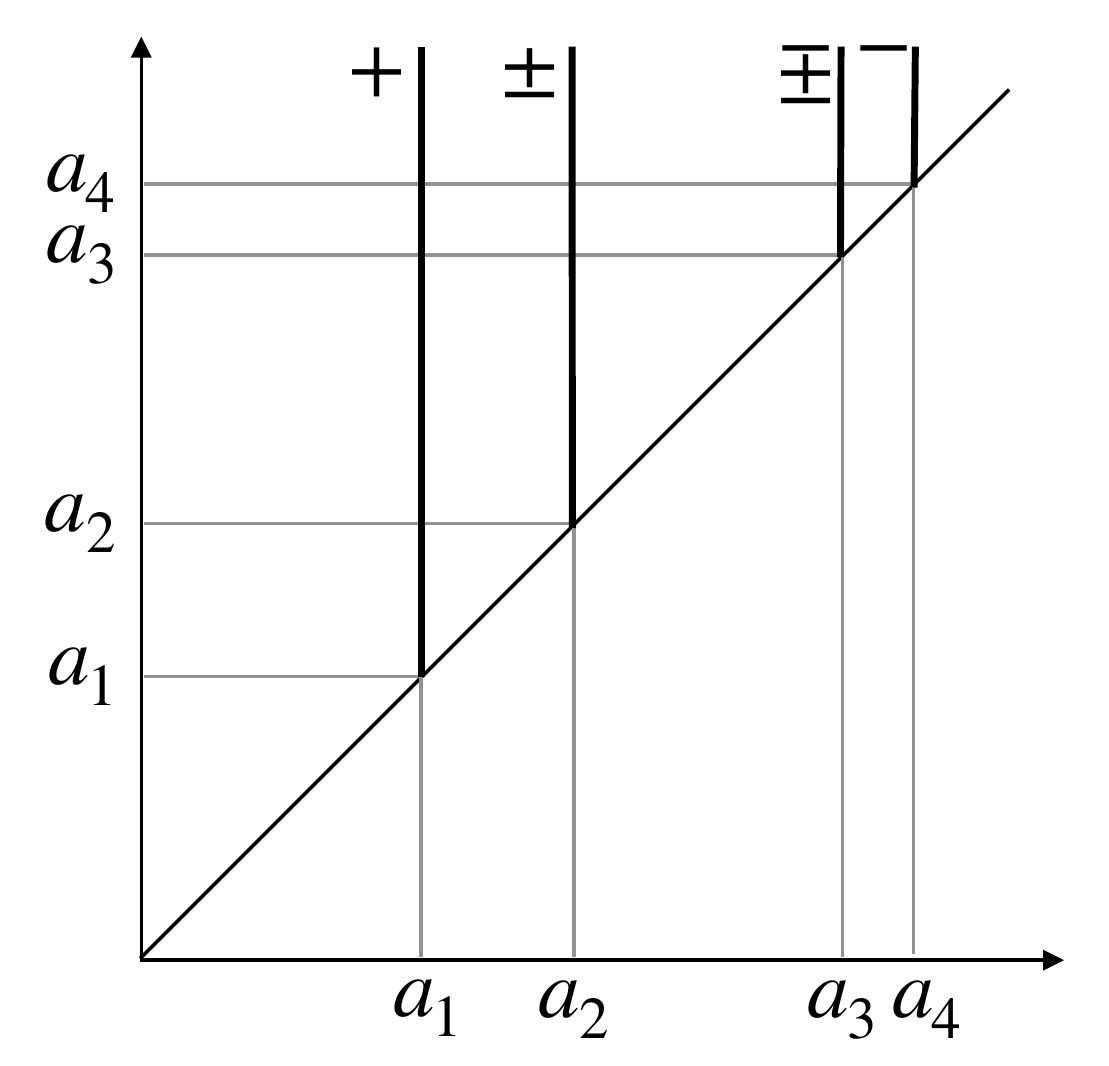}\label{fig:group_action_d}}
\caption{\textbf{$H_1$ multicolored persistence in $\mathbf{Top}^{\mathbb{Z}_2}$.} We consider the action of $\mathbb{Z}_2$ on the space $X$ represented in Panel (a) and generating the quotient highlighted in green. Note how the dashed loop lying on the $(y,z)$-plane is fixed by the action of group. We consider the filtration induced by the height function $h:X\rightarrow\mathbb{R}$. In Panel (b) cycles are labelled according to the group action. The same labeling is reported in the persistence diagram of Panel (c).}\label{fig:group_action}
\end{figure}

If $G$ is a finite group whose cardinality is not a multiple of the characteristic of $\mathbb K$, then the category of $G$-representations in $\mathbf{FinVec}_{\mathbb K}$ is semisimple.
The homology functor induces a map
\(
    H_n: \mathbf{FinSimp}^G \to \mathbf{FinVec}_{\mathbb K}^G
\)
which allows us to define a categorical persistence function on finite simplicial complexes with a $G$-action.

If a filtering function $f: X \to \mathbb R$ is $G$-invariant, i.e. for all $x \in X, g \in G$, $f(x) = f(gx)$, then the sublevels of $f$ form a $\Rl$-indexed diagram in $\mathbf{FinVec}_\mathbb{K}^G$. In practice it may well happen that the filtration and the group action are not compatible and the condition $f(x) = f(gx)$ is not always respected. In this case, one can consider an adjusted filtration such as:
\[
    \overline{f}(x) := \frac{1}{|G|}\sum_{g \in G} f(gx)
\]
\paragraph{Application: Vietoris-Rips filtration under a group action}
The above construction also applies on simiplicial complexes arising from point cloud data. Let $G$ be a finite group and $(X, d)$ a finite metric space with a distance-preserving $G$-action. The Vietoris-Rips filtration~\cite{carlsson2009topology} on $(X, d)$ is $G$-invariant and therefore induces a $\Rl$-indexed diagram in $\mathbf{FinSimp}^G$. Again, if the group action is not distance preserving, we can define an adjusted distance $\overline{d}(x, y) := \frac{1}{|G|}\sum_{g \in G} d(g\cdot x_i, g\cdot x_j)$.

\subsection{Persistent homology on labeled point clouds}
\label{sec:poset}

Let $(P, \preceq)$ be a finite poset. We can consider the category $\mathbf{FinSimp}^{(P, \preceq)}$, i.e. $(P, \preceq)$-indexed diagrams of finite simplicial complexes. We have a chain of functors:
\begin{equation*}
  \mathbf{FinSimp}^{(P, \preceq)} \xrightarrow{H_k} \mathbf{FinVec}_{\mathbb K}^{(P, \preceq)} \xrightarrow{Q} \mathbf{FinVec}_{\mathbb K}^{|P|}
\end{equation*}
where we define $Q$ as follows:
\[
    p \mapsto coker\left(\textstyle\bigoplus_{i \precneqq p} F(i) \to F(p)\right)
\]
 That is to say $Q(F)$ maps $p$ to $F(p)$ quotiented by the images of $F(i)$ with $i$ strictly smaller than $p$. As $\mathbf{FinVec}_{\mathbb K}^{|P|}$ is an Abelian semisimple category, the results of~\cref{sec:multicolored_persistence_diagrams} hold.

\begin{figure}[tb]
\centering
\subfigure[][]{\includegraphics[width=.35\textwidth]{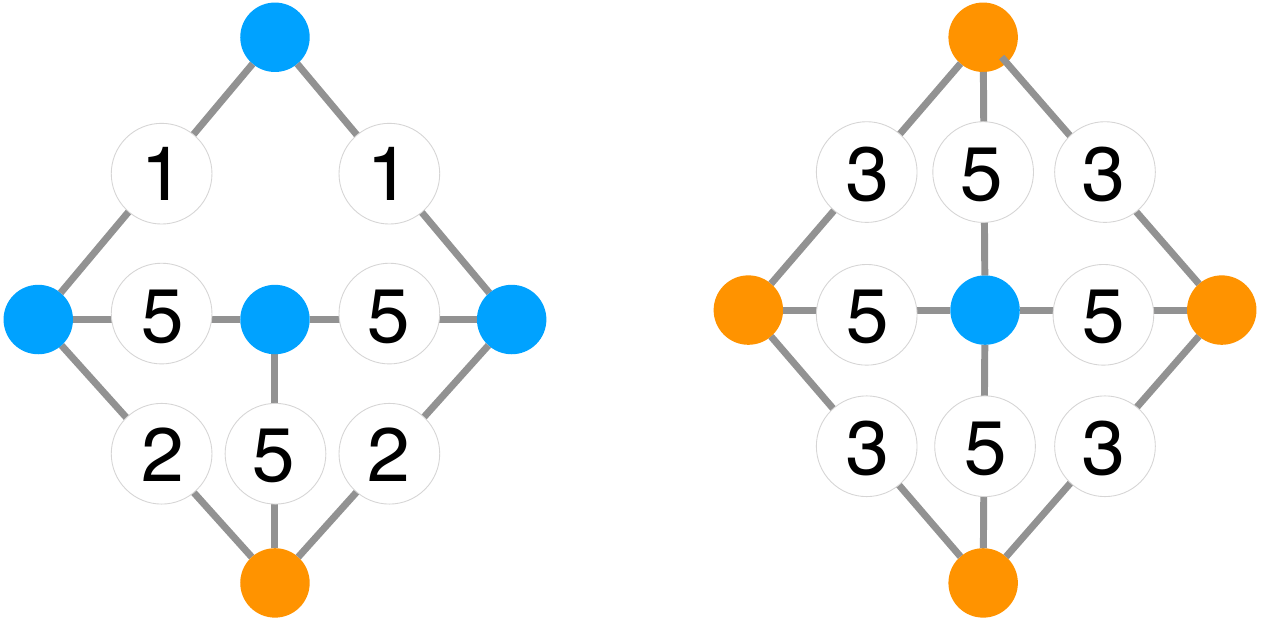}\label{fig:colored_graph_a}}
\qquad\qquad\qquad
\subfigure[][]{\includegraphics[width=.2\textwidth]{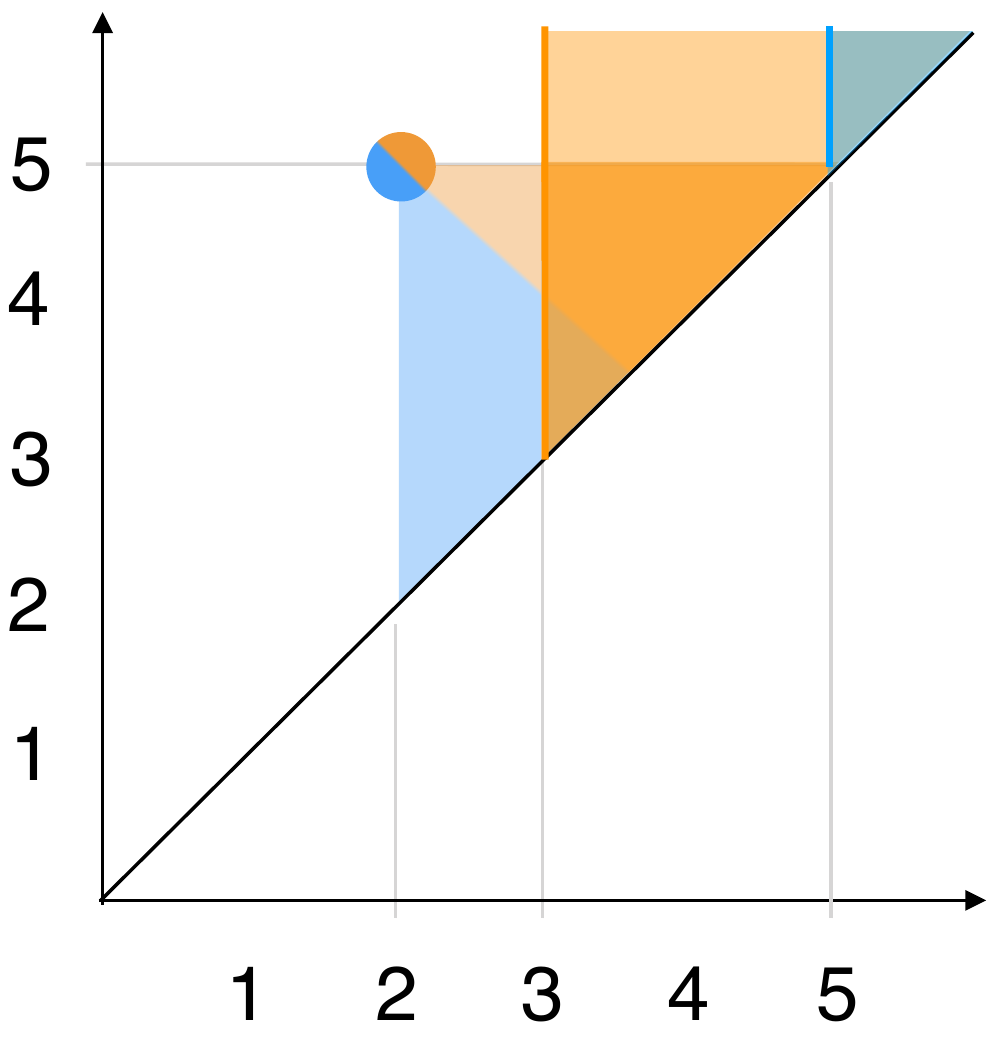}\label{fig:colored_graph_b}}

\subfigure[][]{\includegraphics[width=.8\textwidth]{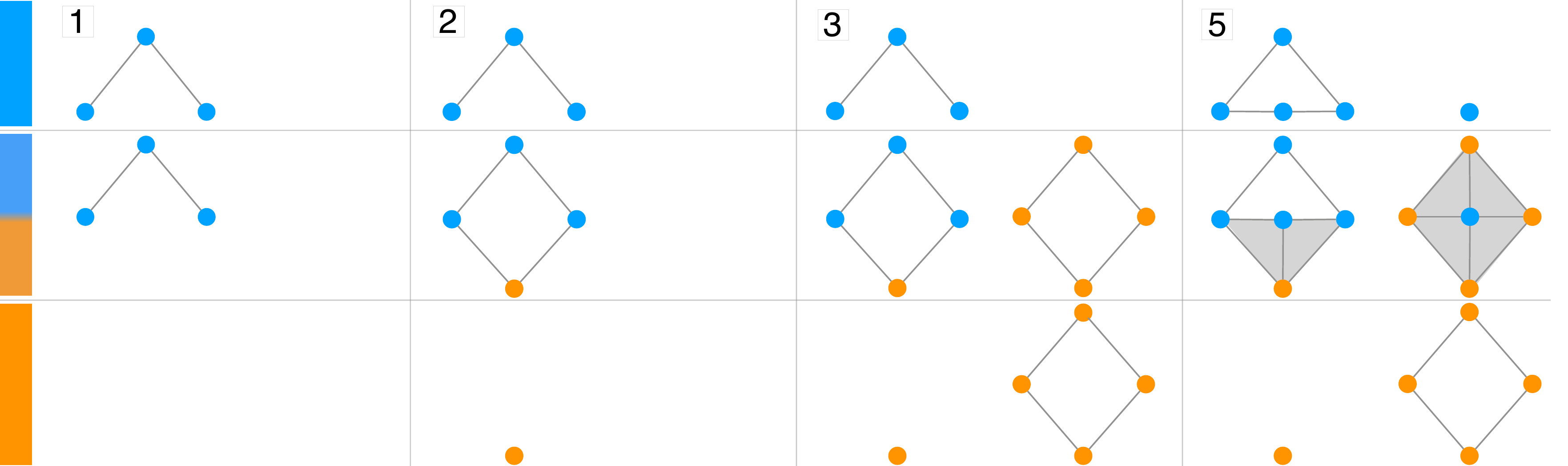}\label{fig:colored_poset_a}}

\subfigure[][]{\includegraphics[width=.4\textwidth]{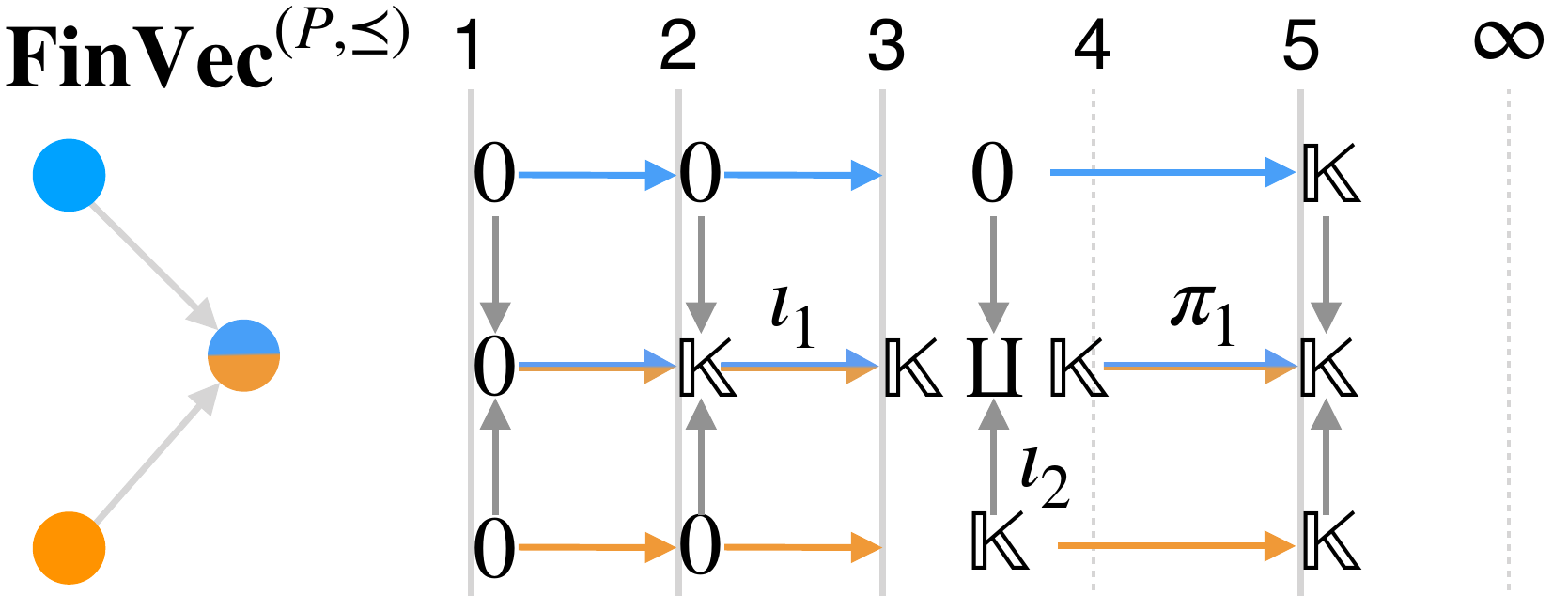}\label{fig:colored_poset_b}}
\qquad
\subfigure[][]{\includegraphics[width=.4\textwidth]{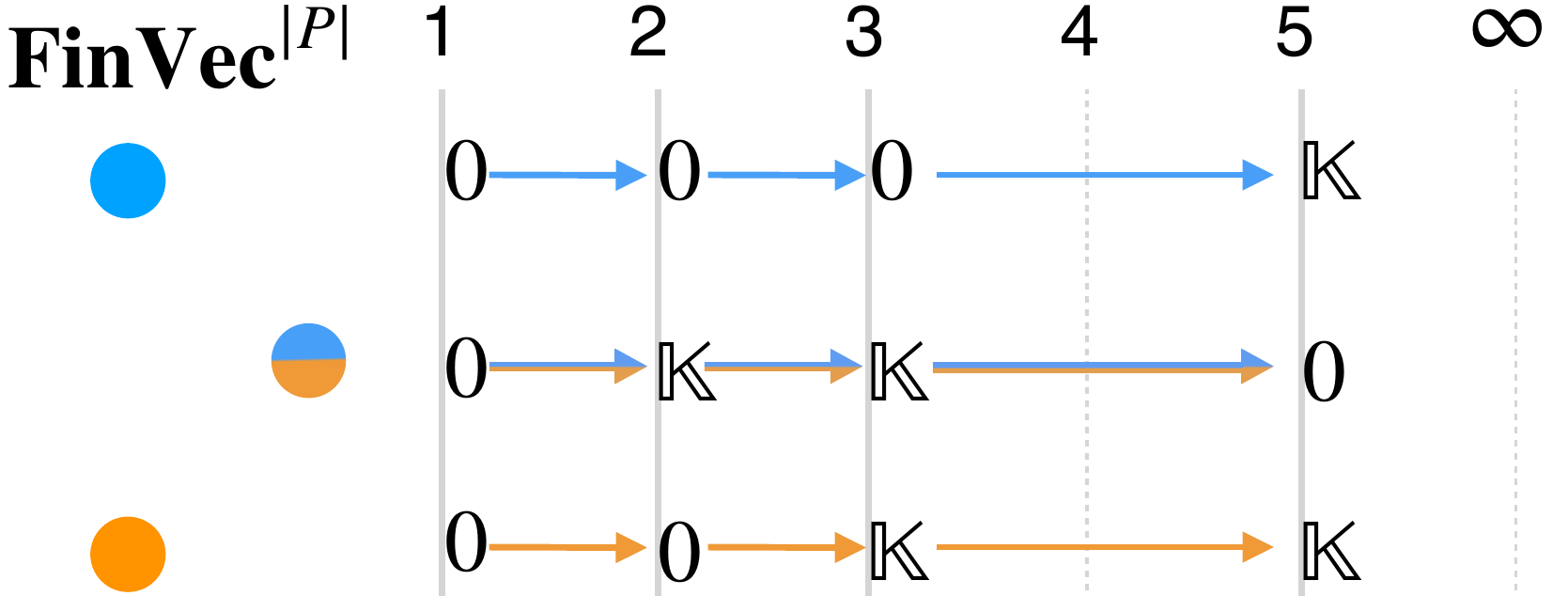}\label{fig:colored_poset_c}}

\caption{\textbf{Multicolored persistence.} We consider the colored weighted graph depicted in Panel (a), obtained by considering pairwise distances of points in a finite metric space, and the Vietoris-Rips filtration induced by the weight function defined on the edges. The multicolored persistence diagram in Panel (b) is obtained by considering the persistence of the cycles in the filtration along with their color, as depicted in Panel (c). Panels (d) and (e) are $\Rl$-indexed diagrams in $\mathbf{FinVec}^{(P, \preceq)}$ and $\mathbf{FinVec}^{|P|}$, respectively. Note how the indecomposable components of the diagram in Panel (e) are the ones described in~\Cref{sec:indecomposable}.}
\label{fig:colored_poset}
\end{figure}

\paragraph{Application: Vietoris-Rips filtration with labeled data}
Let $(X, d, l)$ be a finite metric space with a labeling function $l: X \to \{ l_1, \dots, l_n\}$ from $X$ to a discrete set of labels. Let $X_1, \dots, X_n$ be the subdatasets corresponding to the various labels, i.e. $X_i = l^{-1}(l_i)$. We wish to answer the following question: how do the homologies of the various $X_i$ interact with one another? Let $(P_{n}, \subseteq)$ be the poset of non-empty subsets of $\{1,\dots, n\}$ ordered by inclusion. We have a functor $(P_{n}, \subseteq) \to \mathbf{Met}$ sending $r \subseteq \{1, \dots, n \}$ to $\cup_{i \in r} X_i$. By applying the Vietoris Rips construction we obtain a $(P_{n}, \subseteq)$-indexed diagram of finite simplicial complexes. This allows us to build a multicolored persistence diagram from a labeled dataset keeping into account whether a persistent cycle originates from a single subdataset or a union. See~\cref{fig:colored_poset}.

\section{Conclusion}

Topological persistence and persistent homology allow for a deeper understanding of the high-dimensional organization of data~\cite{carlsson2008local,qaiser2016persistent,bae2017beyond}. Notably, persistence diagrams provide an encompassing view on the topological and geometrical properties of a given dataset, both as a whole and at sample level. The main limitation of these methods is their innate confinement to the category of topological spaces. In~\cite{bergomi_beyond_2019}, we described a first generalization of persistence to concrete categories, extending the persistence paradigm to the analysis of objects such as weighted graphs and quivers, without need of auxiliary topological constructions. However, the classical persistence homology can not be deduced naturally from this generalization. Specifically, whereas the coherent sampling technique defines a persistence function from a set-valued functor (e.g. the connected components), higher homology functors are naturally vector space-valued.

The proposed framework further generalizes both the classical and the concrete category-based persistence. We captured the essential properties of the cardinality function in $\mathbf{FinSet}$ and the dimension function in $\mathbf{FinVec}$ upon which the theory of \textit{size} and \textit{persistent Betti numbers} is built. This led us to the definition of \textit{ranked category}: a regular category equipped with an integer-valued \textit{rank function} defined on its objects. We provide strategies to build such functions as \textit{fiber-wise rank functions}. As special cases of fiber-wise ranks we recover both the cardinality of sets and dimension of vector spaces, as well as the \textit{length} function in the general case of Abelian categories of finite length.
Finally, we show how categorical persistence functions can be built from rank functions, generalizing the construction of coherent sampling introduced in~\cite{bergomi_beyond_2019}.

We provide definitions and more general proofs of the main results in classical and concrete category-based persistence. We define cornerpoints, their multiplicity and thus introduce a general paradigm to build persistence diagrams. We describe the structure of persistence modules and characterize their irreducible components in the semisimple case. These results allow us to define and discuss the interleaving and bottleneck distances, proving the stability of persistence diagrams in our framework. As finite dimensional vector spaces are an Abelian, semisimple category with essentially one simple object, we determine which of these hypotheses are needed for the classical results to hold in the generalized framework.

Our definitions are, to a large extent, preserved by functors. In particular, given two regular categories and a regular functor between them, a rank function on the target category induces a rank function on the source category. The same, without the regularity assumption, holds for any categorical persistence function.
$\Rl$-indexed diagrams, as well as $\epsilon$-interleavings between them, are preserved by arbitrary functors. As a general strategy, we apply functors to move from $\Rl$-indexed diagrams in arbitrary categories to $\Rl$-indexed diagrams in categories where the interleaving distance and the bottleneck distance are equal. In particular, this allows one to define chains of inequalities of interleaving distances in coarser and coarser categories. We observe how, in our setting, the generalized definition of filtration gets freer than the classical one, by not requiring the functions between sublevels to be monomorphisms.

The target categories of choice in the classical approach to persistence ($\mathbf{FinSet}$ or $\mathbf{FinVec}$) offer a clear correspondence between classes of isomorphism of objects and natural numbers, namely cardinality and dimension. To be able to deal with \textit{richer} categories, we develop the concept of coloring, which allows us to recover the equality between interleaving and multicolored bottleneck distance.

Finally, we discuss and exemplify \textit{via} toy examples several applications. In the Abelian semisimple case, we explicitly study the multicolored persistence and build the associated persistence diagram in the case of filtered simplicial complexes or point clouds with a group action. As much of the interest in persistent homology comes from its applications on real world data, we explore applications to point cloud data, where the extra structure is given by labels. Such datasets are routinely used in the training and testing of machine learning models on classification problems. Multicolored persistence naturally defines a topological notion of similarity of two datasets that keeps into account the labeling information. We speculate that such measure of similarity may be used to qualitatively assess the performance of machine learning models on classification problems.

\bibliographystyle{plain}
\bibliography{GroupPersistence}
\newpage
\begin{appendix}
  \numberwithin{definition}{subsection}
  \numberwithin{example}{subsection}
  \numberwithin{lemma}{subsection}
  \numberwithin{remark}{subsection}
  \section{Basic definitions and results}
\label{sec:appendix}

The aim of this section is to provide basic definitions of category theory to the unfamiliar reader.

\subsection{Properties of morphisms}

\begin{definition}[Epimorphism]\label{def:epimorphism}
  Consider $X,Y\in \Obj(\Cat)$. $f:X \to Y$ is an epimorphism if given the following diagram
\[
\begin{tikzcd}
X \ar[r,"f"]
&
Y \ar[r,shift left=.75ex,"g"]
  \ar[r,shift right=.75ex,swap,"h"]
&
Z
\end{tikzcd}
\]
if $g\circ f = h \circ f$ then $g = h$.
\end{definition}

\begin{example}[Epimorphism]\label{ex:epimorphism}
  Let  $X,Y\in\Obj(\mathbf{Set})$ and $f:X\rightarrow Y$, then $f$ is an epimorphism if and only if it is surjective.
\end{example}

\begin{definition}[Monomorphism]\label{def:monomorphism}
  Consider $X,Y\in \Obj(\Cat)$. $f:X \to Y$ is a monomorphism if given the following diagram
\[
\begin{tikzcd}
Z \ar[r,shift left=.75ex,"g"]
  \ar[r,shift right=.75ex,swap,"h"]
&
X \ar[r,"f"]
&
Y
\end{tikzcd}
\]
if $f\circ g = f \circ h$ then $g = h$.
\end{definition}

\begin{example}[Monomorphism]\label{ex:monomorphism}
  Let  $X,Y\in\Obj(\mathbf{Set})$ and $f:X\rightarrow Y$, then $f$ is a monomorphism if and only if it is injective.
\end{example}

\subsection{Universal objects: limits and colimits}

Many interesting objects can be defined in terms of universal properties. We will describe examples from two groups: limits (there are universal arrows to them) and colimits of diagrams (there are universal arrows from them).

\begin{definition}[Terminal object]\label{def:terminal}
  An object $\pt$ in a category $\Cat$ is called \textit{terminal} if there exists a unique morphism $x\xrightarrow{!} \pt$ for any object $x\in\Cat$. If it exists, the terminal object is unique, up to unique isomorphism. For instance, the point space $\pt$ is terminal in \textbf{Top}.
\end{definition}

\begin{definition}[Initial object]\label{def:initial}
  An object $\emptyset$ in a category $\Cat$ is \textit{initial} if for any object $x\in\Cat$ there exists a unique morphism $\emptyset\xrightarrow{!}x$.
\end{definition}

\begin{definition}[Zero object and pointed category]\label{def:zero_object}
  An object which is both initial and terminal is said zero object. A category $\Cat$ equipped with a zero object is said \textit{pointed}.
\end{definition}

\begin{example}[Zero object]\label{ex:zero_object}
  The trivial group $1$ is the zero object in $\mathbf{Grp}$. Indeed $1\hookrightarrow G \twoheadrightarrow G/G=1$, for every $G\in\mathbf{Grp}$.
  In the category of $\mathbf{Vec}_{\mathbb{K}}$ of vector spaces on the field $\mathbb{K}$, the zero object is the $0$-dimensional vector space.
\end{example}

\begin{definition}[Product]
    Let $X, Y$ be objects of a category $\Cat$. The product of $X$ and $Y$ is the object $P$, along with morphisms $\pi_X:Z\to X$ and  $\pi_Y:Z\to Y$, such that given another such $P^\prime, \pi^\prime_X, \pi^\prime_y$, we have a unique morphism $P^\prime \to P$ that makes the following diagram commute:
\[
  \begin{tikzcd}
  &
  P^\prime \ar[dr, "\pi^\prime_X"]
    \ar[d, densely dotted, "u" description]
    \ar[dl, swap,"\pi^\prime_Y"]
  \\
  X
  &
  P \ar[r, swap, "\pi_Y"]
            \ar[l, "\pi_X"]
  &
  Y
  \end{tikzcd}
\]

We denote the product $X \times Y$.
\end{definition}

\begin{definition}[Coproduct]
    Let $X, Y$ be objects of a category $\Cat$. The coproduct of $X$ and $Y$ is the object $C$, along with morphisms $\iota_X:X\to C$ and  $\iota_Y:Y\to C$, such that given another such $C^\prime, \iota^\prime_X, \iota^\prime_Y$, we have a unique morphism $C \to C^\prime$ that makes the following diagram commute:

\[
\begin{tikzcd}
&
C \ar[dr, leftarrow, "\iota^\prime_X"]
  \ar[d, leftarrow, densely dotted, "u" description]
  \ar[dl, leftarrow, swap,"\iota^\prime_Y"]
\\
X
&
X\amalg Y \ar[r, leftarrow, swap, "\iota_Y"]
          \ar[l, leftarrow, "\iota_X"]
&
Y
\end{tikzcd}
\]

We denote the coproduct $X \amalg Y$.
\end{definition}

\begin{example}[Product and coproduct]
  Let  $X,Y\in\Obj(\mathbf{Set})$. The product $X \times Y$ is simply the cartesian product. The coproduct $X \amalg Y$ is the disjoint union of $X$ and $Y$.
\end{example}

\begin{definition}[Equalizer] Let $X,Y$ be objects of $\Cat$ and consider two morphisms $X\xrightarrow{f} Y$, $X\xrightarrow{g} Y$. An object $Q$, together with a morphism $Q\xrightarrow{q} Y$ is an \textit{equalizer} if $f\circ q = g\circ q$. Moreover, the pair $(Q,q)$ must be universal,~i.e. given another coequalizer $(Q^\prime, q^\prime)$, there exists a unique morphism $Q^\prime\xrightarrow{u} Q$ such that the following diagrams commutes.
\[
\begin{tikzcd}
Q \ar[r,"q"]
&
X \ar[r,shift left=.75ex,"f"]
  \ar[r,shift right=.75ex,swap,"g"]
&
Y\\
Q^\prime \ar[ru,swap,"q^\prime"] \ar[u, densely dotted, "u" description]
\end{tikzcd}
\]
Thus, equalizers are unique up to isomorphism. Moreover, every equalizer is a monomorphism.
\end{definition}

\begin{example}[Equalizer]\label{ex:equalizer}
  Let  $A,B\in\Obj(\mathbf{Set})$ and $f,g:A\rightarrow B$, then the equalizer is
  \begin{equation*}
      \{a \in A \; | \; f(a) = g(a) \}
  \end{equation*}
\end{example}

\begin{definition}[Coequalizer] Let $X,Y$ be objects of $\Cat$ and consider two morphisms $X\xrightarrow{f} Y$, $X\xrightarrow{g} Y$. An object $Q$, together with a morphism $Y\xrightarrow{q} Q$ is a \textit{coequalizer} if $q\circ f = q\circ g$. Moreover, the pair $(Q,q)$ must be universal,~i.e. given another coequalizer $(Q^\prime, q^\prime)$, there exists a unique morphism $Q\xrightarrow{u} Q^\prime$ such that the following diagrams commutes.
\[
\begin{tikzcd}
X \ar[r,shift left=.75ex,"f"]
  \ar[r,shift right=.75ex,swap,"g"]
&
Y \ar[r,"q"] \ar[dr,swap,"q'"]
&
Q \ar[d,densely dotted, "u" description]
\\
& & Q'
\end{tikzcd}
\]
Thus, coequalizers are unique up to isomorphism. Moreover, every coequalizer is an epimorphism.
\end{definition}

\begin{example}[Coequalizer]\label{ex:coequalizer}
  Let  $A,B\in\Obj(\mathbf{Set})$ and $f,g:A\rightarrow B$, then the coequalizer is the
  quotient of $B$ with $\sim$, such that $f(x)\sim g(x)$ for every $x\in A$.
\end{example}

\begin{definition}[Finitely (co)complete category]
  A category $\Cat$ is finitely complete if it has equalizers, a terimal object and binary products.
  Analogously, a category $\Cat$ is finitely cocomplete if it has coequalizers, an initial object and binary coproducts.
\end{definition}

\begin{definition}[Pullback]\label{def:pullback}
  Let $X,Y$ and $Z$ be objects of a category $\Cat$, and $X\xrightarrow{f} Z$, $Y\xrightarrow{g} Z$ morphisms. An object $P$ and the morphisms $P\xrightarrow{p_1} X$, $P\xrightarrow{p_2} Y$ is a \textit{pullback} if the following diagram

\[
\begin{tikzcd}
    P \arrow[rightarrow]{r}{p_1} \arrow[swap, rightarrow]{d}{p_2} & X \arrow[rightarrow]{d}{f} \\
    Y \arrow[rightarrow]{r}{g} & Z
\end{tikzcd}
\]
commutes and, given another such $P^\prime, p_1^\prime, p_2^\prime$ we have a unique morphism $P^\prime \xrightarrow{u} P$ that makes the following diagram commute:

\[
\begin{tikzcd}
    P^\prime
    \arrow[drr, bend left, "p_1^\prime"]
    \arrow[swap, ddr, bend right, "p_2^\prime"]
    \arrow[dr, densely dotted, "u" description] & & \\
    & P \arrow[r, "p_1"] \arrow[swap, d, "p_2"]
    & X \arrow[d, "f"] \\
    & Y \arrow[r, "g"]
    & Z
\end{tikzcd}
\]

That is to say, the pullback is universal with respect to the diagram, and thus unique up to isomorphism. We denote it $X\times_Z Y$.
\end{definition}

\begin{remark}
    The pullback $X \times_Z Y$ is the equalizer of the natural maps $Z \rightrightarrows X \times Y$.
\end{remark}

\begin{example}[Pullback]
    Given three sets $X,Y$ and $Z$ and functions $X\xrightarrow{f} Z$, $Y\xrightarrow{g} Z$, the coproduct $X\times_Z Y$ is the subset of the cartesian product:

    \begin{equation*}
        X \times_Z Y = \{ (x,y)\; | \; x \in X,\; y \in Y \text{ and } f(x) = g(y) \}
    \end{equation*}
\end{example}

\begin{example}[Fiber]\label{def:fiber}
  In the category of sets, let $\pt$ be the terminal object. Let $f:X\rightarrow Y$ be a map between sets and $y\in Y$. The fiber over $y$ is $f^{-1}(y)\subset X$ realised by the pullback
\[
\begin{tikzcd}
    f^{-1}(y) \arrow[rightarrow]{r}{} \arrow[swap, rightarrow]{d}{} & X \arrow[rightarrow]{d}{f} \\
    \pt \arrow[rightarrow]{r}{} & Y
\end{tikzcd}
\]
\end{example}

\subsection{Regular, Abelian and semisimple categories}

\begin{definition}[Regular epimorphism]\label{def:reg_epimorphism}
  An epimorphism that is the coequalizer of a parallel pair of morphism.
\end{definition}

\begin{definition}[Regular category]\label{def:regular_cat}
  A category $\Rcat$ is \textit{regular} if the following conditions hold:
  \begin{enumerate}
    \item $\Rcat$ is finitely complete.
    \item Given $X\xrightarrow{f} Y$ a morphism and its pullback $(P, p_1, p_2)$, then the coequalizer of $p_1$ and $p_2$ exists.
    \item Given the pullback
    \[
    \begin{tikzcd}
        R \arrow[rightarrow]{r}{} \arrow[swap, rightarrow]{d}{g} & X \arrow[rightarrow]{d}{f} \\
        Z \arrow[rightarrow]{r}{} & Y
    \end{tikzcd}
    \]
    if $f$ is a regular epimorphism, so is $g$.
  \end{enumerate}
\end{definition}

We will introduce some preparatory concepts to the definition of Abelian category.

\paragraph{Kernels and cokernels}
Let $\Cat$ be a category and $\xi:X\to Y$ a morphism. If for every object $Z$ and morphisms $g,h:Z\to X$, we have $\xi g=\xi h$, then $\xi$ is said a (left) zero morphism. If $\Cat$ is pointed, i.e. it has a zero object $0$, then given two objects $X, Y$ there exists a unique zero morphism $\xi: X \to Y$ given by the composition $X \to 0 \to Y$.

\begin{definition}[Kernel]
    Let $\Cat$ be a category with zero morphism $\xi$ and $f:X\to Y$ a morphism. The kernel of $f$ is defined as the equalizer
    of $\xi$ and $f$.
\end{definition}

\begin{definition}[Cokernel]
    Let $\Cat$ be a category with zero morphism $\xi$ and $f:X\to Y$ a morphism. The cokernel of $f$ is defined as the coequalizer
    of $\xi$ and $f$.
\end{definition}

\begin{definition}[Abelian category]\label{def:abelian_cat}
  A category $\Cat$ is abelian if
  \begin{enumerate}
    \item it is pointed, i.e. $\Cat$ has a zero objet;
    \item has binary products and binary coproducts;
    \item every morphism has kernel and cokernel;
    \item each monomorphism is a kernel and each epimorphism is a cokernel.
  \end{enumerate}
\end{definition}

In an Abelian category, the binary product and binary coproduct coincide and are sometimes called biproduct. We will sometimes simply call it sum, in analogy with the sum of vector spaces.

\begin{definition}[Simple object]
    Let $\Cat$ be an Abelian category. An object $X\in \Obj(\Cat)$ is simple if its only subobjects are $0$ and $X$.
\end{definition}

\begin{lemma}[Schur Lemma]
    Given $S, S^\prime$ simple objects in an Abelian category, morphisms from $S$ to $S^\prime$ are either zero or invertible.
\end{lemma}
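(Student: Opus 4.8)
The plan is to take an arbitrary morphism $f\colon S \to S'$, assume $f \neq 0$, and show that $f$ must then be an isomorphism by exhibiting it as simultaneously a monomorphism and an epimorphism. The conclusion will follow from the standard structural fact—already invoked in the proof of~\cref{prop:indecomposablequiver}—that in an Abelian category a morphism which is both monic and epic is invertible.

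First I would analyze the kernel of $f$. Since we work in an Abelian category, every morphism has a kernel, and the canonical map $ker(f) \hookrightarrow S$ is a monomorphism, so it exhibits $ker(f)$ as a subobject of $S$. By simplicity of $S$, this subobject is either $0$ or all of $S$. If the inclusion $ker(f) \hookrightarrow S$ were an isomorphism, then from the defining relation $f \circ (ker(f)\hookrightarrow S) = 0$ we would get $f = 0$, contradicting our assumption; hence $ker(f) \simeq 0$ and $f$ is a monomorphism.

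Dually, I would analyze the image of $f$ via its factorization $S \twoheadrightarrow im(f) \hookrightarrow S'$, where the second arrow is a monomorphism and therefore exhibits $im(f)$ as a subobject of $S'$. By simplicity of $S'$, either $im(f) \simeq 0$—which forces $f$ to factor through the zero object and hence $f = 0$, again a contradiction—or $im(f) \simeq S'$, in which case the monomorphism $im(f) \hookrightarrow S'$ is an isomorphism and $f$, being the composite of an epimorphism with an isomorphism, is itself an epimorphism.

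Having established that $f$ is both monic and epic, I conclude that $f$ is an isomorphism. I expect the only step that genuinely uses the full Abelian structure, rather than merely the existence of kernels and the image factorization, to be this final one: the assertion that a bimorphism is invertible rests on the axiom that every monomorphism is a kernel and every epimorphism is a cokernel (\cref{def:abelian_cat}). This is the point to handle with care, although it is classical and already used earlier in the paper; the remainder is a direct application of the definition of simple object to the kernel in the source and the image in the target.
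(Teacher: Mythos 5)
Your proof is correct, but there is nothing in the paper to compare it against: the Schur Lemma appears only in the appendix of background material, stated without proof as a classical fact. Your route is the standard textbook one---$ker(f)$ is a subobject of $S$, the image $im(f)$ is a subobject of $S^\prime$, simplicity collapses each to $0$ or the whole object, and for $f \neq 0$ this forces $f$ to be both monic and epic, hence invertible. One small refinement to your accounting of where the Abelian axioms enter: the step ``$ker(f) \simeq 0$ implies $f$ is a monomorphism'' is not a consequence of the mere existence of kernels (it fails in general pointed categories with kernels); it uses the additive structure of an Abelian category (so that $fg = fh$ gives $f(g-h) = 0$, whence $g-h$ factors through $ker(f) \simeq 0$), or equivalently the coimage factorization $f = \bar{f}\circ coker(ker(f))$ together with the observation that $coker(0 \to S)$ is an isomorphism. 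Under the paper's definition of Abelian category (\cref{def:abelian_cat}) additivity is a theorem rather than an axiom, so this step---like the bimorphism-implies-isomorphism step you rightly flag---rests on classical structure theory rather than on the definition alone. With that caveat, the argument is complete and standard.
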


\begin{definition}[Semisimple category]\label{def:semisimple_cat}
An Abelian category is semisimple if all its objects are semisimple, i.e.~each object can be written as a finite sum of simple objects.
\end{definition}

\end{appendix}

\end{document}